\documentclass[a4paper, 11pt]{amsart}   
\usepackage{mathptmx, amssymb,amscd,latexsym, eulervm}   
\usepackage{amsmath}
\usepackage{amsthm}
\usepackage{mathdots}
\usepackage[pagebackref,colorlinks=true,linkcolor=blue,urlcolor=blue]{hyperref}
\usepackage{color}
\usepackage[onehalfspacing]{setspace}
\usepackage{tabularx}
\usepackage{amsfonts}
\usepackage{paralist}
\usepackage{aliascnt}
\usepackage[initials, lite]{amsrefs}
\usepackage{amscd}
\usepackage{blkarray}
\usepackage{mathbbol}
\usepackage{setspace}
\usepackage[inner=2.4cm,outer=2.4cm, bottom=3.2cm]{geometry}
\usepackage{tikz, tikz-cd}
\usepackage{calligra,mathrsfs}

\usepackage{tikz}
\usetikzlibrary{matrix}
\usetikzlibrary{arrows,calc}
\allowdisplaybreaks


\BibSpec{collection.article}{%
	+{}  {\PrintAuthors}                {author}
	+{,} { \textit}                     {title}
	+{.} { }                            {part}
	+{:} { \textit}                     {subtitle}
	+{,} { \PrintContributions}         {contribution}
	+{,} { \PrintConference}            {conference}
	+{}  {\PrintBook}                   {book}
	+{,} { }                            {booktitle}
	+{,} { }                            {series}
	+{, vol.} { }                            {volume}
	+{,} { }                            {publisher}
	+{,} { \PrintDateB}                 {date}
	+{,} { pp.~}                        {pages}
	+{,} { }                            {status}
	+{,} { \PrintDOI}                   {doi}
	+{,} { available at \eprint}        {eprint}
	+{}  { \parenthesize}               {language}
	+{}  { \PrintTranslation}           {translation}
	+{;} { \PrintReprint}               {reprint}
	+{.} { }                            {note}
	+{.} {}                             {transition}
	+{}  {\SentenceSpace \PrintReviews} {review}
}
\AtBeginDocument{%
	\def\MR#1{}
}

\makeatletter
\@namedef{subjclassname@2020}{%
	\textup{2020} Mathematics Subject Classification}
\makeatother


\newcommand{\kk}{\mathbb{k}}

\newcommand{\NN}{\normalfont\mathbb{N}}
\newcommand{\ZZ}{\normalfont\mathbb{Z}}
\newcommand{\gin}{{\normalfont\text{gin}}}

\newcommand{\PP}{{\normalfont\mathbb{P}}}
\newcommand{\xx}{{\normalfont\mathbf{x}}}

\newcommand{\yy}{\normalfont\mathbf{y}}
\newcommand{\dd}{{\normalfont\mathbf{d}}}

\newcommand{\mm}{{\normalfont\mathfrak{m}}}
\newcommand{\init}{{\normalfont\text{in}}}

\newcommand{\QQ}{\mathbb{Q}}
\newcommand{\pp}{{\normalfont\mathfrak{p}}}
\newcommand{\aaa}{\mathfrak{a}}

\newcommand{\qqq}{\normalfont\mathfrak{q}}
\newcommand{\nn}{{\normalfont\mathfrak{N}}}
\newcommand{\bn}{{\normalfont\mathbf{n}}}
\newcommand{\supp}{{\normalfont\text{supp}}}

\newcommand{\bm}{{\normalfont\mathbf{m}}}
\newcommand{\ttt}{{\normalfont\mathbf{t}}}
\newcommand{\sss}{{\normalfont\mathbf{s}}}
\newcommand{\nnn}{{\normalfont\mathfrak{n}}}

\newcommand{\fJ}{{\mathfrak{J}}}

\newcommand{\rank}{\normalfont\text{rank}}
\newcommand{\reg}{\normalfont\text{reg}}
\newcommand{\HS}{{\normalfont\text{HS}}}

\newcommand{\CS}{{\rm CS}}

\newcommand{\codim}{{\normalfont\text{codim}}}

\newcommand{\Tor}{{\normalfont\text{Tor}}}

\newcommand{\Ext}{\normalfont\text{Ext}}

\newcommand{\Ker}{\normalfont\text{Ker}}

\newcommand{\Quot}{\normalfont\text{Quot}}

\newcommand{\Ann}{\normalfont\text{Ann}}
\newcommand{\Supp}{\normalfont\text{Supp}}
\newcommand{\Ass}{{\normalfont\text{Ass}}}

\newcommand{\Min}{{\normalfont\text{Min}}}

\newcommand{\ee}{{\normalfont\mathbf{e}}}
\newcommand{\Hom}{\normalfont\text{Hom}}

\newcommand{\bbA}{\mathbb{A}}

\newcommand{\OO}{\mathcal{O}}

\newcommand{\LL}{\mathbb{L}}

\newcommand{\FF}{\normalfont\mathcal{F}}
\newcommand{\HL}{\normalfont\text{H}_{\mm}}
\newcommand{\HH}{\normalfont\text{H}}

\newcommand{\Proj}{\normalfont\text{Proj}}
\newcommand{\Hilb}{{\normalfont\text{Hilb}}}
\newcommand{\Spec}{\normalfont\text{Spec}}

\newcommand{\multProj}{\normalfont\text{MultiProj}}
\newcommand{\msupp}{\normalfont\text{MSupp}}



\def\f0{\mathbf{0}}

\def\ff{\mathbf{f}}

\def\1{\mathbf{1}}

\newtheorem{theorem}{Theorem}[section]

\newtheorem{headthm}{Theorem}

\newaliascnt{headcor}{headthm}

\aliascntresetthe{headcor}

\newaliascnt{headconj}{headthm}

\aliascntresetthe{headconj}

\newaliascnt{corollary}{theorem}
\newtheorem{corollary}[corollary]{Corollary}
\aliascntresetthe{corollary}

\newaliascnt{claim}{theorem}

\aliascntresetthe{claim}

\newaliascnt{lemma}{theorem}
\newtheorem{lemma}[lemma]{Lemma}
\aliascntresetthe{lemma}

\newaliascnt{conjecture}{theorem}

\aliascntresetthe{conjecture}

\newaliascnt{proposition}{theorem}
\newtheorem{proposition}[proposition]{Proposition}
\aliascntresetthe{proposition}

\theoremstyle{definition}
\newaliascnt{definition}{theorem}
\newtheorem{definition}[definition]{Definition}
\aliascntresetthe{definition}

\newaliascnt{notation}{theorem}
\newtheorem{notation}[notation]{Notation}
\aliascntresetthe{notation}

\newaliascnt{example}{theorem}
\newtheorem{example}[example]{Example}
\aliascntresetthe{example}

\newaliascnt{examples}{theorem}

\aliascntresetthe{examples}

\newaliascnt{remark}{theorem}
\newtheorem{remark}[remark]{Remark}
\aliascntresetthe{remark}

\newaliascnt{question}{theorem}

\aliascntresetthe{question}

\newaliascnt{questions}{theorem}

\aliascntresetthe{questions}

\newaliascnt{problem}{theorem}

\aliascntresetthe{problem}

\newaliascnt{construction}{theorem}

\aliascntresetthe{construction}

\newaliascnt{setup}{theorem}
\newtheorem{setup}[setup]{Setup}
\aliascntresetthe{setup}

\newaliascnt{algorithm}{theorem}

\aliascntresetthe{algorithm}

\newaliascnt{observation}{theorem}

\aliascntresetthe{observation}

\newaliascnt{defprop}{theorem}
\newtheorem{defprop}[defprop]{Definition-Proposition}
\aliascntresetthe{defprop}

\def\equationautorefname~#1\null{(#1)\null}
\def\sectionautorefname~#1\null{Section #1\null}
\def\subsectionautorefname~#1\null{\S #1\null}

\def\surjects{\twoheadrightarrow}


\title{Multidegrees, prime ideals, and non-standard gradings}

\author{Alessio Caminata}
\address[Caminata]
{Dipartimento di Matematica, Universit\`a di Genova, via Dodecaneso 35, 16146 Genova, Italy}
\email{alessio.caminata@unige.it}

\author{Yairon Cid-Ruiz}
\address[Cid-Ruiz]
{Department of Mathematics, KU Leuven, Celestijnenlaan 200B, 3001 Leuven, Belgium}
\email{yairon.cidruiz@kuleuven.be}

\author{Aldo Conca}
\address[Conca]
{Dipartimento di Matematica, Universit\`a di Genova, via Dodecaneso 35, 16146 Genova, Italy}
\email{conca@dima.unige.it}

\date{\today}
\keywords{multidegrees, prime ideals, non-standard gradings, multigraded generic initial ideals, projections, standardization, Cartwright-Sturmfels ideals, polymatroids, multiplicity-free varieties.}
\subjclass[2020]{13H15, 13P10, 14C17, 05E40, 52B40.}

\begin{document}

	\maketitle

	\begin{abstract}
		We study several properties of multihomogeneous prime ideals.
		We show that the multigraded generic initial ideal of a prime has very special properties, for instance, its radical is Cohen-Macaulay.
		We develop a comprehensive study of multidegrees in  arbitrary positive multigraded settings.
		In these environments, we extend the notion of Cartwright-Sturmfels ideals by means of a standardization technique.
		Furthermore, we recover or extend important results in the literature, for instance: we provide a multidegree version of Hartshorne's result stating the upper semicontinuity of arithmetic degree under flat degenerations, and we give an alternative proof of Brion's result regarding multiplicity-free varieties.
	\end{abstract}

	{
		\hypersetup{linkcolor=black}
		\setcounter{tocdepth}{1}
		\tableofcontents
	}

	\section{Introduction}
	
	This paper is concerned with several aspects of the theory of multidegrees.
	The concept of multidegree provides the right generalization of the degree of a variety to a multiprojective setting, and its study goes back to seminal work by van der Waerden \cite{VAN_DER_WAERDEN} in 1929.
	Multidegrees have found applications in several areas (e.g., algebraic geometry, commutative algebra, combinatorics, convex geometry, and more recently, algebraic statistics), and there is a long list of more recent papers where the notion of multidegree (or mixed multiplicity) plays a fundamental role (see, e.g., \cite{Bhattacharya,VERMA_BIGRAD,HERMANN_MULTIGRAD,trung2001positivity,cidruiz2021mixed, EXPONENTIAL_VARIETIES,KNUTSON_MILLER_SCHUBERT, michalek2020maximum, POSITIVITY, CDNG_MINORS, CDNG_CS_IDEALS,  Huh12, huh2019logarithmic, CS_PAPER, EQ_MDEG_SYM_MAT, TRUNG_VERMA, manivel2020complete, KNUTSON_MILLER_YONG,STURMFELS_UHLER}).

	\smallskip

	The goal of this paper is twofold. 
	Firstly, we concentrate on studying several interesting properties that prime ideals enjoy in a multigraded setting.
	Secondly, we make a comprehensive study of multidegrees in non-standard multigradings. 
	
	For organizational purposes, we divide the Introduction into two shorter subsections.

	\subsection{Prime ideals}
	\hfill
	
	Let $\kk$ be a field and $S = \kk[x_{i,j} \mid 1 \le i \le p, 0 \le j \le m_i]$ be a standard $\NN^p$-graded polynomial ring with grading induced by setting $\deg(x_{i,j}) = \ee_i \in \NN^p$, where $\ee_i$ is the $i$-th standard basis vector.
	Then $S$ is naturally seen as the multihomogeneous coordinate ring of the product of projective spaces $\PP = \PP_\kk^{m_1} \times_\kk \cdots \times_\kk \PP_\kk^{m_p}$. 
	
	Let $P \subset S$ be an $S$-homogeneous prime ideal.
	A fundamental goal of this paper is to study several properties of $P$. 
	Although our results regarding prime ideals do not involve in principle multidegrees, our proofs heavily depend on the use of this concept. 
	The fact that prime ideals are quite special from a multigraded point of view has already been noticed. 
	To highlight some of these we point out: Castillo--Cid-Ruiz--Li--Monta\~no--Zhang's result that the support of the positive multidegrees of $S/P$ forms a discrete polymatroid \cite[Theorem A]{POSITIVITY}, Br\"and\'en--Huh's result that the volume polynomial (a different encoding of multidegrees) of $S/P$ is Lorentzian \cite[Theorem 4.6]{HUH_BRANDEN}, and Brion's result on multiplicity-free varieties \cite{BRION_MULT_FREE}.
	
	Our first main result is regarding \emph{multigraded generic initial ideals}.
	Given a monomial order $>$ on $S$, one may define the multigraded generic initial ideal in analogy with the singly-graded setting (see \autoref{sect_multdeg_primes} for details).

	\begin{headthm}[\autoref{thm_sqrt_gin_CM}]
		\label{thmA}
		{\rm(}$\kk$ infinite{\rm)}
		We have that $\sqrt{\gin_>(P)}$ is a Cohen-Macaulay ideal.
	\end{headthm}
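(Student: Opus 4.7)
The plan is to analyze $\gin_>(P)$ via the Borel group action, pin down its minimal primes using the polymatroid of multidegrees of $P$, and conclude Cohen--Macaulayness from a combinatorial shellability argument.

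First, since $\kk$ is infinite, $J := \gin_>(P)$ exists and is fixed by the Borel subgroup $B = B_1 \times \cdots \times B_p$ of upper-triangular matrices in $GL_{m_1+1}(\kk) \times \cdots \times GL_{m_p+1}(\kk)$, hence so is $\sqrt{J}$. As $B$ is connected, it permutes the (finite) set of minimal primes of $\sqrt{J}$ trivially, so each minimal prime is itself $B$-fixed; a standard argument on $B$-fixed monomial primes applied blockwise then forces every such prime to be a coordinate linear prime
$$Q_\bk = (x_{i,j} \mid 1 \le i \le p,\ 0 \le j < k_i)$$
for some $\bk = (k_1, \ldots, k_p) \in \NN^p$. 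Let $\mathcal{B}$ be the set of such tuples; then $\sqrt{J} = \bigcap_{\bk \in \mathcal{B}} Q_\bk$, and $S/\sqrt{J}$ is the Stanley--Reisner ring of the simplicial complex $\Delta$ with facets $F_\bk = \bigsqcup_i \{x_{i,j} : j \ge k_i\}$, $\bk \in \mathcal{B}$.

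Next, Gr\"obner degeneration preserves multidegrees, so $\msupp(S/J) = \msupp(S/P)$. Reading off the multidegree of $S/J$ from its primary decomposition, each top-dimensional minimal prime $Q_\bk$ contributes a positive multiple of $\mathbf{t}^\bk$ to the multidegree polynomial; hence the top-dimensional part of $\mathcal{B}$ equals $\msupp(S/P)$. By \cite{POSITIVITY}, the latter is the set of bases of a discrete polymatroid $\mathcal{M}$, all of whose elements have the same total sum $c = \codim(P)$. A further argument, using primality of $P$ together with the Borel structure, should rule out minimal primes of $J$ of strictly smaller codimension than $c$, showing $\sqrt{J}$ is equidimensional and $\mathcal{B} = \mathcal{M}$, so $\Delta$ is pure.

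The theorem then reduces to the combinatorial claim that, for any discrete polymatroid $\mathcal{M}$ on $[p]$ with capacities $m_i+1$, the right-aligned simplicial complex $\Delta$ with facets $\{F_\bk : \bk \in \mathcal{M}\}$ is Cohen--Macaulay. I would prove this by constructing a shelling: order the facets lexicographically by $\bk$, and use the polymatroid exchange axiom on $\mathcal{M}$ (for $\bk, \bk' \in \mathcal{M}$ and $k_i > k'_i$, some $j$ with $k_j < k'_j$ satisfies $\bk - \ee_i + \ee_j \in \mathcal{M}$) to verify that each facet $F_\bk$ meets the union of the earlier facets in a pure subcomplex of codimension one. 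Equivalently, $\Delta$ can be identified with a shifted subcomplex of the independence complex of the matroid canonically attached to $\mathcal{M}$, and shellability then follows from classical shellability results for matroid and shifted complexes.

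The two main obstacles I foresee are: (i) the equidimensionality of $\gin_>(P)$ above, which is the step where primality of $P$ must play an essential role beyond Borel-fixedness alone; and (ii) carrying out the shelling of $\Delta$, which requires translating the polymatroid exchange axiom faithfully into the shelling condition, with the lexicographic ordering, the restriction sets, and the codimension one property all verified consistently.
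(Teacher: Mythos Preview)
Your outline mirrors the paper's strategy closely: identify the minimal primes of $\sqrt{\gin(P)}$ as Borel-fixed coordinate primes $Q_{\mathbf{a}}$, show that the exponent vectors form the support of $\mathcal{C}(S/P;\ttt)$ and hence a discrete polymatroid by \cite{POSITIVITY}, and deduce Cohen--Macaulayness combinatorially. The genuine gap is your obstacle (i). You correctly sense that equidimensionality of $\sqrt{\gin(P)}$ is where primality must enter, but you do not have the argument, and Borel-fixedness alone will not supply it. The paper fills this by citing the Kalkbrener--Sturmfels theorem \cite{KS_INIT_PRIMES}: for any prime $P$ and any monomial order, $\sqrt{\init(P)}$ is equidimensional and connected in codimension one. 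Since $\gin(P) = \init(g(P))$ for a generic $g \in G$ and $g(P)$ is again prime, this applies directly and pins down $\mathcal{B}$ as exactly the polymatroid. (A minor slip: you need to rule out minimal primes of codimension \emph{greater} than $c$, not smaller.)

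For obstacle (ii), your shelling approach would work, but the paper takes a slicker route via Alexander duality. The Alexander dual of $\sqrt{\gin(P)}$ is the ideal $K$ generated by the monomials $\xx_{\mathbf{a}_k} = \prod_{i}\prod_{j<a_{k,i}} x_{i,j}$, and by Eagon--Reiner, $\sqrt{\gin(P)}$ is Cohen--Macaulay iff $K$ has a linear resolution. Now $K$ is precisely the polarization of the polymatroidal ideal $M$ generated by the monomials $x_{1,0}^{a_{k,1}}\cdots x_{p,0}^{a_{k,p}}$ in $\kk[x_{1,0},\ldots,x_{p,0}]$; polymatroidal ideals have linear resolutions (they have linear quotients, by the exchange property), and polarization preserves Betti numbers. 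This packages the exchange argument you would need for shelling into a single citation to \cite{HERZOG_HIBI}. The paper even remarks afterward that the same proof yields shellability of the associated complex, so the two approaches are equivalent in content.
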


	This result can be seen as yet another manifestation of the fact that the initial ideals of primes are ``special''.
	For example:
	\begin{itemize}[--]
		\item In \cite{KS_INIT_PRIMES}, Kalkbrener and Sturmfels showed that the radical of the initial ideal of a prime is equidimensional and connected in codimension one.
		\item In \cite[Chapter 8]{STURMFELS_GROBNER}, Sturmfels proved that the radical of the initial ideal of a toric ideal is Cohen-Macaulay.
		\item In \cite{HOSTEN_THOMAS}, Ho\c{s}ten and Thomas proved that the associated primes of the initial ideal of a toric ideal satisfy a saturated chain property.
	\end{itemize}
	On the other hand, it is not difficult to find a prime ideal $P$ such that $\init_>(P)$ and its radical are not Cohen-Macaulay (see \autoref{rem_not_CM_init}).
	In this sense, \autoref{thmA} is a sharp result.
	
	\smallskip
	
	We also discovered an interesting (and perhaps unexpected) behavior of the minimal primary components of $\gin_>(P)$.
	Let $\fJ = \{j_1,\ldots,j_k\} \subset [p] = \{1,\ldots,p\}$ be a subset. 
	We denote by $S_{(\fJ)}$ the polynomial subring generated by the variables with degrees $\ee_{j_1},\ldots,\ee_{j_k}$, and we write  $I_{(\fJ)} = I \cap S_{(\fJ)}$ for the corresponding contraction of an $S$-homogeneous ideal $I \subset S$.
	For an ideal $I \subset S$, let $\text{MLength}(I)$ denote the maximal length of the minimal primary components of $I$.
	The following theorem shows that  $\text{MLength}$ cannot increase under the natural projections of $\gin_>(P)$.
	
	\begin{headthm}[\autoref{thm_proj_gin}]
		\label{thmB}
		{\rm(}$\kk$ infinite{\rm)}
		We have the inequality 
		$$
		{\rm MLength}\left(\gin_>\left(P_{(\fJ)}\right)\right) \;\le\; {\rm MLength}\left(\gin_>\left(P\right)\right).
		$$
		Moreover, for every $\pp \in \Min_{S_{(\fJ)}}\left(S_{(\fJ)}/\gin_>(P_{(\fJ)})\right)$, we can find $\qqq \in \Min_S\left(S/\gin_>(P)\right)$ such that the length of the $\pp$-primary component of $\gin_>\left(P_{(\fJ)}\right)$ divides the length of the $\qqq$-primary component of $\gin_>(P)$.
	\end{headthm}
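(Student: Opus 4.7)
The plan rests on three ideas: relating $\gin_>(P)$ and $\gin_>(P_{(\fJ)})$ through a compatibility between generic initial ideals and contraction to a coordinate subring, exploiting the Borel-fixedness of multigraded gins to pin down the form of their minimal primes, and finally comparing the primary component lengths by interpreting them as multidegree data.

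First I would establish the key compatibility. The multigraded gin is computed by a generic element $g$ of the group $G = \prod_{i=1}^p \GL_{m_i+1}(\kk)$ preserving the $\NN^p$-grading. Writing $g = g_{\fJ} \cdot g_{\fJ^c}$ as a product of elements in the subgroups acting on variables in $\fJ$ and its complement respectively, the factor $g_{\fJ^c}$ fixes $S_{(\fJ)}$, so contracting to $S_{(\fJ)}$ commutes with that piece of the action. Combined with a suitable elimination-type monomial order (refining $>$ so that variables outside $\fJ$ are bigger), this should produce an inclusion $\bigl(\gin_>(P)\bigr)_{(\fJ)} \subseteq \gin_>(P_{(\fJ)})$ together with a precise comparison of minimal primary decompositions. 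In particular, every minimal prime of $\gin_>(P_{(\fJ)})$ should appear as the contraction of (at least one) minimal prime of $\gin_>(P)$.

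Next, since both $\gin_>(P)$ and $\gin_>(P_{(\fJ)})$ are multigraded Borel-fixed, their minimal primes are generated by prefixes of variables in each block, of the form $\pp = (x_{i,j} : i \in T,\ 0 \le j \le k_i)$ for some $T \subseteq [p]$ and integers $k_i \ge 0$; for $\gin_>(P_{(\fJ)})$ such a $T$ lies in $\fJ$. Given $\pp \in \Min_{S_{(\fJ)}}(S_{(\fJ)}/\gin_>(P_{(\fJ)}))$, the lift suggested by step one is a minimal prime $\qqq = (x_{i,j} : i \in T',\ 0 \le j \le k'_i) \in \Min_S(S/\gin_>(P))$ with $T \subseteq T'$ and $k'_i = k_i$ for $i \in T$; the ``extra'' prefixes in $T' \setminus T$ encode codimension contributed by the eliminated variables.

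The last and hardest step is to prove the divisibility of the primary lengths, from which the \textbf{MLength} inequality follows by maximizing over $\pp$ and choosing $\qqq$ to realize the maximum. For a multigraded Borel-fixed ideal the length of the $\pp$-primary component equals a specific multidegree of the quotient, namely the mixed multiplicity at the complementary multihyperplane section, so the divisibility should factor the length of the $\qqq$-primary component of $\gin_>(P)$ as the length of the $\pp$-primary component of $\gin_>(P_{(\fJ)})$ times the degree of the generic fiber of the projection of $V(P)$ corresponding to $T' \setminus T$. The main obstacle is twofold: isolating this multiplicative fiber contribution as an \emph{integer}, and ensuring the lift $\qqq$ can be chosen so that the quotient is maximized to obtain the \textbf{MLength} bound. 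I expect to handle both points by invoking the positivity and polymatroidal behavior of multidegrees under projection (cf.\ \cite{POSITIVITY}) in conjunction with the compatibility established in step one.
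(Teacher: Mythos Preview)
Your step 3 contains the right key observation: for a Borel-fixed ideal, the length of each minimal primary component equals a specific multidegree of the quotient. Once you have this identification on both sides (for $\gin_>(P)$ and for $\gin_>(P_{(\fJ)})$), the statement reduces entirely to a claim about multidegrees under projection: every positive multidegree of $\Pi_\fJ(X)$ divides some positive multidegree of $X$. This is exactly how the paper proceeds (\autoref{thm_proj_gin} reduces to \autoref{thm_mdeg_proj}), and it makes your steps 1--2 unnecessary. The paper never needs to lift minimal primes of $\gin_>(P_{(\fJ)})$ to minimal primes of $\gin_>(P)$ directly; indeed, \autoref{examp_project_gin} shows that the minimal primary components of $\gin_>(P_{(\fJ)})$ may arise from \emph{embedded} components of $\gin_>(P)$, so any argument routed through a comparison of primary decompositions along the contraction map is fragile.

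The genuine gap is in how you plan to prove the multidegree divisibility itself. Your proposed factorization ``length of the $\qqq$-primary component $=$ length of the $\pp$-primary component times a fiber degree'' is only directly available when the projection $X \to \Pi_\fJ(X)$ is generically finite (this is \autoref{lem_proj_same_dim}). When $\dim(X) > \dim(\Pi_\fJ(X))$ there is no single integer fiber degree to extract, and the references in \cite{POSITIVITY} give polymatroidal \emph{support} information, not divisibility of the actual values. The paper handles this by an induction on $\dim(X) - \dim(\Pi_\fJ(X))$: one cuts $X$ by a generic hyperplane in one of the discarded factors (\autoref{nota_generic}, \autoref{rem_gen_Bertini}, \autoref{thm_dim_proj}), which drops $\dim(X)$ by one while leaving $\Pi_\fJ(X)$ unchanged and preserving the relevant multidegrees, until one reaches the generically finite case. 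Your outline does not supply this mechanism, and without it the divisibility step does not go through.
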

	
	To better appreciate the complexity of the process described in \autoref{thmB}, the reader is referred to \autoref{examp_project_gin}.
	Also, \autoref{thmB} may not hold for non prime ideals (see \autoref{rem_counter_gin_not_prime}).
	
	\smallskip
	
	Let $X = \multProj(S/P) \subset \PP = \PP_\kk^{m_1} \times_\kk \cdots \times_\kk \PP_\kk^{m_p}$ be the integral closed subscheme corresponding to $P$.
	We also consider the behavior of the multidegrees of $X$ under the natural projections. 
	It turns out that our proof of \autoref{thmB} is a consequence of this study of multidegrees under projections.
	The projection corresponding to $\fJ = \{j_1,\ldots,j_k\}$ is given by $\Pi_\fJ : \PP = \PP_\kk^{m_1} \times_\kk \cdots \times_\kk \PP_\kk^{m_p} \rightarrow \PP' = \PP_\kk^{m_{j_1}} \times_\kk \cdots \times_\kk \PP_\kk^{m_{j_k}}$.
	For each  $\bn=(n_1,\ldots,n_p) \in \NN^p$ with $|\bn| = n_1+\cdots+n_p=\dim(X)$,  one says that $\deg_\PP^\bn(X)$ is the \textit{multidegree of $X$ of type $\bn$ with respect to $\PP$} (see \autoref{sect_prelim} for precise definitions).
	Geometrically speaking, when $\kk$ is algebraically closed, $\deg_\PP^\bn(X)$ equals the number of points (counting multiplicity) in the intersection of $X$ with the product $L_1 \times_\kk \cdots \times_\kk L_p  \subset \PP$, where $L_i \subset \PP_\kk^{m_i}$ is a general linear subspace of dimension $m_i-n_i$.
	Let $\text{MDeg}_\PP(X)$ be the maximum of the multidegrees of $X \subset \PP$.
	The next theorem shows that $\text{MDeg}$ cannot increase under the natural projections.
	
	\begin{headthm}[\autoref{thm_mdeg_proj}]
		\label{thmC}
		We have the inequality 
		$$
		{\rm MDeg}_{\PP'}\left(\Pi_\fJ(X)\right) \;\le\; {\rm MDeg}_\PP(X).
		$$
		Moreover, for any $\dd \in \NN^k$ with $|\dd| = \dim\left(\Pi_\fJ(X)\right)$ and such that $\deg_{\PP'}^\dd\left(\Pi_\fJ(X)\right) > 0$, there exists some $\bn \in \NN^p$ with $|\bn| = \dim(X)$ and such that $\deg_\PP^\bn(X) >0$ and $\deg_{\PP'}^\dd\left(\Pi_\fJ(X)\right)$ divides $\deg_\PP^\bn(X)$.
	\end{headthm}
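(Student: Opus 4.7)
The plan is to interpret multidegrees as intersection numbers with products of generic linear subspaces, and derive a B\'ezout-type multiplicativity formula through the generic fiber of the projection. After reordering the factors we may assume $\fJ = \{1,\ldots,k\}$, so that $\PP = \PP' \times_\kk \PP''$ with $\PP'' := \PP_\kk^{m_{k+1}} \times_\kk \cdots \times_\kk \PP_\kk^{m_p}$ and $\Pi_\fJ$ is the first projection; since multidegrees are preserved under field extension (possibly at the cost of breaking into components, which are handled additively), we may also assume $\kk$ algebraically closed. Since $X$ is integral and the projection is proper, the scheme-theoretic image $Y := \Pi_\fJ(X) \subset \PP'$ is integral, and the induced map $\pi \colon X \to Y$ is dominant. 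Set $r := \dim(X) - \dim(Y) \ge 0$. The generic fiber $X_\eta$ of $\pi$ is an equidimensional closed subscheme of $\PP''_{\kk(Y)}$ of dimension $r$; its multidegrees $\deg_{\PP''_{\kk(Y)}}^{\bn''}(X_\eta)$ in types $\bn'' \in \NN^{p-k}$ with $|\bn''|=r$ are nonnegative and not all zero.

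Fix $\dd \in \NN^k$ with $|\dd|=\dim(Y)$ and $\deg_{\PP'}^\dd(Y) > 0$, and pick $\bn'' \in \NN^{p-k}$ with $|\bn''|=r$ and $c := \deg_{\PP''_{\kk(Y)}}^{\bn''}(X_\eta) > 0$. Setting $\bn := (\dd,\bn'') \in \NN^p$, the heart of the proof is the multiplicativity formula
\[
\deg_\PP^\bn(X) \;=\; c \cdot \deg_{\PP'}^\dd(Y).
\]
Choose generic linear subspaces $L_i \subset \PP_\kk^{m_i}$ of codimension $n_i$ for $i = 1, \ldots, p$. By Kleiman's transversality together with generic flatness of $\pi$, the intersection $Y \cap (L_1 \times \cdots \times L_k)$ consists of exactly $N := \deg_{\PP'}^\dd(Y)$ reduced closed points, each lying inside an open $U \subset Y$ over which $\pi$ is flat with equidimensional fibers of pure dimension $r$. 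For each such point $y$, the scheme $\pi^{-1}(y) \cap (L_{k+1} \times \cdots \times L_p)$ is zero-dimensional of length exactly $c$ for sufficiently generic $L_{k+1}, \ldots, L_p$, because the multigraded Hilbert polynomial is constant in flat families and hence forces the multidegrees of closed fibers over $U$ to coincide with those of $X_\eta$. Summing the contributions over the $N$ fibers yields the formula; the divisibility statement and the inequality $\text{MDeg}_{\PP'}(\Pi_\fJ(X)) \le \text{MDeg}_\PP(X)$ follow at once.

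The main obstacle is making rigorous the passage from the multidegree of the generic fiber to that of a sufficiently general closed fiber, while simultaneously arranging transversality everywhere the intersection meets. The cleanest route is: first, restrict $\pi$ to a dense open $U \subset Y$ where generic flatness applies, then invoke constancy of the multigraded Hilbert polynomial in flat families to identify the multidegrees of closed fibers over $U$ with those of $X_\eta$; second, use Kleiman's theorem factorwise (exploiting the transitive action of $\prod_i \text{PGL}_{m_i+1}$ on $\PP$) to guarantee that the chosen linear subspaces can be translated so that all intersections occur over the good locus $U$, at simple points of $Y$, and transversally on each fiber of $\pi$.
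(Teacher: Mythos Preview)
Your approach is genuinely different from the paper's and aims at a stronger intermediate statement: a multiplicative formula $\deg_\PP^{(\dd,\bn'')}(X) = \deg_{\PP''_{\kk(Y)}}^{\bn''}(X_\eta)\cdot\deg_{\PP'}^\dd(Y)$ via the generic fiber. The paper instead reduces to the case $\fJ=[p-1]$, then inducts on $s=\dim(X)-\dim(Y)$: it cuts $X$ by a generic hyperplane in the last factor (using a purely transcendental field extension to make ``generic'' meaningful), notes that the image under $\Pi_{[p-1]}$ is unchanged while $s$ drops by one, and in the base case $s=0$ invokes the classical degree formula $\deg_\PP^{(\bn,0)}(X)=\deg(\Phi)\cdot\deg_{\PP'}^{\bn}(Y)$ for the generically finite map $\Phi$ (proved either by Hilbert polynomials or by Fulton's projection formula). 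This requires no transversality or algebraic closure.

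Your argument has a real gap at the very first step. Passing to $\bar\kk$ can destroy the integrality of $X$, and the throwaway ``breaking into components, handled additively'' does not salvage the \emph{divisibility} conclusion: if $X_{\bar\kk}$ decomposes, the images of the pieces under $\Pi_\fJ$ need not coincide with $Y_{\bar\kk}$ as schemes, and summing divisibility relations with possibly different divisors does not yield one for $\deg_{\PP'}^\dd(Y)$. Separately, Kleiman's theorem over a homogeneous space in positive characteristic only guarantees proper (expected-dimension) intersection, not reducedness, so your claim that $Y\cap(L_1\times\cdots\times L_k)$ consists of \emph{reduced} points needs characteristic zero. The paper sidesteps both problems by using purely transcendental extensions (which keep $R$ a domain, see \autoref{rem_infinite_field}) and arguments based on Hilbert polynomials/intersection theory that never require reduced intersections. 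If you want to rescue your route over arbitrary fields, prove the multiplicative formula directly via Hilbert polynomials in the spirit of the first proof of \autoref{lem_proj_same_dim}, or via Fulton's projection formula as in its second proof, rather than via geometric transversality.
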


	 If we drop the condition that $P$ is a prime ideal, then one can find fairly simple instances where multidegrees do increase under projections (see \autoref{examp_grow_mdeg_proj}).
	 
	 \smallskip
	 
	 Finally, we also provide an alternative proof of the following remarkable result of Brion \cite{BRION_MULT_FREE} regarding multiplicity-free varieties. 
	 We say that $X \subset \PP$ is \emph{multiplicity-free} if $\deg_\PP^{\bn}(X) \in \{0,1\}$ for all $\bn \in \NN^p$ with $|\bn| = \dim(X)$.
	 Let $\msupp_\PP(X) = \lbrace \bn \in \NN^p \mid |\bn| = \dim(X) \text{ and } \deg_{\PP}^\bn(X) > 0\rbrace$ 
	 be the support of positive multidegrees of $X$.
	 
	 \begin{headthm}[{\autoref{thm_Brion_mult_free}; Brion \cite{BRION_MULT_FREE}}]
	 		\label{thmD}
			If $X \subset \PP = \PP_{\kk}^{m_1} \times_\kk \cdots \times_\kk \PP_{\kk}^{m_p}$ is multiplicity-free, then: 
			\begin{enumerate}[\rm (i)]
				\item $X$ is arithmetically Cohen-Macaulay {\rm(}i.e., $S/P$ is a Cohen-Macaulay ring{\rm)}.
				\item $X$ is arithmetically normal {\rm(}i.e., $S/P$ is a normal domain{\rm)}.
				\item {\rm(}$\kk$ infinite{\rm)} There is flat degeneration of $X$ to the following reduced union of multiprojective spaces
				$$
				H \;=\; \bigcup_{\bn = (n_1,\ldots,n_p) \,\in\, \msupp_\PP(X)} 	\PP_\kk^{n_1} \times_\kk \cdots \times_\kk \PP_\kk^{n_p}  \;\, \subset \;\, \PP = \PP_{\kk}^{m_1} \times_\kk \cdots \times_\kk \PP_{\kk}^{m_p}
				$$		
			\end{enumerate}
			where $\PP_\kk^{n_i} = \Proj\left(\kk[x_{i,m_i-n_i}, \ldots,x_{i,m_i}]\right) \subset \PP_{\kk}^{m_i} = \Proj\left(\kk[x_{i,0}, \ldots,x_{i,m_i}]\right)$ only uses the last $n_i+1$ coordinates.
	 \end{headthm}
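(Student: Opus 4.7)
The plan is to reduce everything to the multigraded generic initial ideal $J := \gin_>(P)$ and exploit the fact that multidegrees are invariant under multigraded Gröbner degeneration. The combinatorics of Borel-fixed monomial ideals, combined with the multiplicity-free hypothesis, should pin $J$ down completely.

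First I would observe that since the multigraded Hilbert series (and hence every multidegree) is preserved from $S/P$ to $S/J$, the quotient $S/J$ is again multiplicity-free: $\deg_\PP^{\bn}(S/J) \in \{0,1\}$ for every $\bn$ with $|\bn|=\dim(S/P)$. Next, because $J$ is Borel-fixed for the multigraded Borel subgroup of $\prod_i \mathrm{GL}_{m_i+1}$, every minimal prime of $J$ is a coordinate prime of the form $P_\bn = (x_{i,0},\ldots,x_{i,m_i-n_i-1} : i\in[p])$ for some $\bn \in \NN^p$. A direct computation (intersecting $\PP_\kk^{n_1}\times\cdots\times\PP_\kk^{n_p}$ with a general product of linear subspaces of codimensions $n'_i$) shows $\deg_\PP^{\bn'}(S/P_\bn)=\delta_{\bn,\bn'}$ whenever $|\bn|=|\bn'|$.

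The crucial step would then apply the associativity formula for multidegrees,
$$
\deg_\PP^{\bn}(S/J) \;=\; \sum_{\qqq\in \Min(J),\,\dim(S/\qqq)=|\bn|+p} \ell_{\qqq}(J)\cdot \deg_\PP^{\bn}(S/\qqq),
$$
and combine it with the two previous points. The only contributor to the left-hand side is $\qqq=P_\bn$, so the multiplicity-free hypothesis forces, for every $\bn$ with $|\bn|=\dim(S/P)$, that either $P_\bn \notin \Min(J)$ (when $\deg^\bn = 0$) or $P_\bn \in \Min(J)$ with $\ell_{P_\bn}(J)=1$ (when $\deg^\bn=1$). Thus the set of minimal primes of $J$ is exactly $\{P_\bn : \bn\in\msupp_\PP(X)\}$, every minimal primary component has length one, and $J=\sqrt{J}=\bigcap_{\bn\in\msupp_\PP(X)} P_\bn$. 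One recognises this intersection as precisely the defining ideal $I(H)$. Parts (i) and (iii) now fall out: by \autoref{thmA}, $\sqrt{J}$ is Cohen-Macaulay, and since $J=\sqrt{J}$, the ideal $J$ itself is Cohen-Macaulay; because depth can only decrease under Gröbner degeneration while dimension is preserved, $S/P$ is Cohen-Macaulay, yielding (i). The standard one-parameter Gröbner family from $P$ to $J=I(H)$ supplies the flat degeneration required in (iii).

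The main obstacle I expect is part (ii), the normality of $S/P$. Cohen-Macaulayness already gives Serre's $S_2$, so the task is to verify $R_1$. My plan would be to exploit that the special fibre $S/J$ is not only Cohen-Macaulay but also generically reduced at every minimal prime, so that the flat family deforms $(S/P)_\pp$ at a codimension-one prime $\pp$ to a one-dimensional, reduced, Cohen-Macaulay local ring. Combined with the fact that $S/P$ is a domain, a standard deformation/semicontinuity argument (in the spirit of the Conca--Varbaro squarefree-gin results, transferring the reducedness of the limit to regularity in codimension one of the generic fibre) should give $R_1$, hence normality. This is the subtlest step: unlike Cohen-Macaulayness, normality does not descend automatically from the squarefree initial ideal, so care is needed to leverage the very specific coordinate-subspace structure of the components of $H$.
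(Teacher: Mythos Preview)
Your argument contains a genuine gap at the step where you conclude $J = \sqrt{J}$. The associativity formula for multidegrees only sees the minimal primes of top dimension; it tells you nothing about embedded components. From ``every minimal primary component of $J=\gin_>(P)$ has length one'' you may conclude that the intersection of the minimal primary components equals $\sqrt{J} = \bigcap_{\bn\in\msupp_\PP(X)} P_\bn$, but $J$ itself could still carry embedded primes, and then $J \subsetneq \sqrt{J}$. (A toy illustration in one grading: $(x^2,xy) \subset \kk[x,y]$ has unique minimal prime $(x)$ with primary component $(x)$ of length one, yet is not radical.) Borel-fixed monomial ideals certainly can have embedded primes; the paper itself exhibits such behaviour for $\gin_>(P)$ with $P$ prime (\autoref{examp_project_gin}). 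Since your arguments for both (i) and (iii) rest on $J=\sqrt{J}$, this gap blocks all three parts.

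Ruling out these embedded components is precisely the heart of the paper's proof, and it is where the real work lies. The paper establishes that the full multigraded Hilbert \emph{polynomial} of $S/\gin_>(P)$ agrees with that of $S/\sqrt{\gin_>(P)}$ (\autoref{prop_eq_Hilb_poly}); this is done by induction on $\dim(X)$, slicing by a generic hyperplane in one factor and invoking a cohomological base-change criterion (\autoref{prop_criterion_flat_cohom}, \autoref{thm_fib_full_crit}, inspired by the fiber-full scheme) to show that the special fibre cannot acquire an associated prime in codimension $r$ that the generic fibre $X$ lacks. After adjoining variables so that every associated prime of $\gin_>(P)$ is relevant, equality of Hilbert polynomials forces $\gin_>(P)=\sqrt{\gin_>(P)}$. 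Your outline for (i) and (iii) is then correct \emph{once} this is in hand, but you have not supplied a mechanism to obtain it.

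Your plan for (ii) is also off target: squarefreeness of the initial ideal does not transfer $R_1$ to $S/P$ (the Conca--Varbaro theorem controls local cohomology and extremal Betti numbers, not regularity in codimension one). The paper argues instead with the integral closure directly: writing $\overline{S/P}=(S/P)[w_1,\dots,w_b]$ and presenting it as $T/Q$ with $Q$ again a multiplicity-free prime, a lex initial ideal of $Q$ must be radical by the already-established part (iii), which is incompatible with a minimal generator $z_l^{g_l}$, $g_l\ge 2$, that a hypothetical $w_l\notin S/P$ would force.
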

	 
	 Our proof of \autoref{thmD} uses techniques quite different from the ones used in \cite{BRION_MULT_FREE}, and ideas related to the fiber-full scheme (\cite{LOC_FIB_FULL_SCHEME, FIBER_FULL,FIB_FULL_SCHEME}) play an important role.
	 We also point out that our proof is valid for arbitrary fields.

	\subsection{Non-standard gradings}\hfill

	Multidegrees in non-standard multigradings are considerably more complicated as they lack a clear multiprojective geometrical content, but they are still important because of the flexibility they provide in terms of applications.
	One prime example is Knutson--Miller's result \cite[Theorem A]{KNUTSON_MILLER_SCHUBERT} expressing double Schubert polynomials and double Grothendieck polynomials as the multidegree polynomials and $\mathcal{K}$-polynomials, respectively, of matrix Schubert varieties with a fine grading.
	This fine grading takes into account both row and column position; 
	we provide some examples that explore the technicalities of this fine grading (up to certain sign changes) in \autoref{sect_examp_det}.
	These fine gradings play a key role in the recent proof by Castillo--Cid-Ruiz--Mohammadi--Monta\~no \cite{DOUBLE_SCHUBERT} of a conjecture of Monical--Tokcan--Yong \cite{monical2019newton} stating the saturated Newton polytope property of double Schubert polynomials.
	
	\smallskip
	
	Let $R = \kk[x_1,\ldots,x_n]$ be a positively $\NN^p$-graded polynomial ring (that is, $\deg(x_i) \in \NN^p \setminus \{\mathbf{0}\}$ for all $1 \le i \le n$ and $\deg(\alpha) = \mathbf{0} \in \NN^p$ for all $\alpha \in \kk$).
	Let $I \subset R$ be an $R$-homogeneous ideal.
	The \emph{multidegree polynomial} $\mathcal{C}(R/I;\ttt) = \mathcal{C}(R/I;t_1,\ldots,t_p) $ of $R/I$  is defined in terms of the multigraded Hilbert series of $R/I$ (see \autoref{sect_prelim} for details).
	In the standard multigraded setting it may be seen as an algebraic encoding of the multidegrees of the corresponding multiprojective scheme (see \autoref{thm_two_mdegs}).
	
	Our approach is to associate to $I$ an ideal $J$ in a standard $\NN^p$-graded polynomial ring $S$ such that $\mathcal{C}(R/I;\ttt) = \mathcal{C}(S/J;\ttt)$.
	We call $J$ the \emph{standardization} of $I$ (see \autoref{setup_standardization}).
	This construction is inspired by the techniques \emph{step-by-step homogenization} introduced by McCullough--Peeva in \cite{McCULLOUGH_PEEVA} for singly-graded settings and \emph{standardization} introduced in \cite{DOUBLE_SCHUBERT} for certain multigraded settings.
	It turns out that, there is a tight relation between $I \subset R$ and its standardization $J \subset S$, as it may be witnessed from the following statements:
	\begin{enumerate}[\rm (i)]
		\item $\codim(I) = \codim(J)$.
		\item $I \subset R$ and $J \subset S$ have the same $\NN^p$-graded Betti numbers. 
		\item $\mathcal{K}(R/I;\ttt) = \mathcal{K}(S/J;\ttt)$ and  $\mathcal{C}(R/I;\ttt) = \mathcal{C}(S/J;\ttt)$.
		\item $R/I$ is a Cohen-Macaulay ring if and only if $S/J$ is a Cohen-Macaulay ring.
		\item Initial ideals are compatible with standardization.
		\item If $I \subset R$ is a prime ideal and it does not contain any variable, then $J \subset S$ is also a prime ideal.
	\end{enumerate}
	For more details, see \autoref{thm_std}.
	
	These results allow us to deduce in \autoref{thm_polymatroid_multdeg} that the support of $\mathcal{C}(R/P;\ttt)$ is a discrete polymatroid for any prime ideal $P \subset R$, which constitutes an extension of \cite[Theorem A]{POSITIVITY}.

	\smallskip
	
	We introduce the notion of \emph{arithmetic multidegree polynomial}, which  can be seen as a generalization of the notion of arithmetic degree considered by Sturmfels--Trung--Vogel in \cite{STV_DEGREE}.
	It is defined as follows
	$$
	\mathcal{A}(R/I; \ttt) \;=\;  \sum_{P \in \Ass_R(R/I)} \text{length}_{R_P}\left(\HH_P^0((R/I)_P)\right) \, \mathcal{C}(R/P; \ttt).
	$$
	Motivated by the work of Hartshorne on the connectivity of Hilbert schemes \cite{HARTSHORNE_CONNNECT}, we show in \autoref{thm_Amdeg_degen} that arithmetic multidegree is upper semicontinuous under flat degenerations. 
	As a consequence, in the case of Gr\"obner degenerations, we obtain the coefficient-wise inequality 
	$$
	\mathcal{A}\left(R/I; \ttt\right) \,\le_c\, \mathcal{A}\left(R/\init_>(I); \ttt\right).
	$$
	This upper semicontinuity result was obtained by Hartshorne \cite[Chapter 2]{HARTSHORNE_CONNNECT} in the singly-graded setting.

	\smallskip
	
	In \autoref{subsect_CS_ideals}, we extend the notion of Cartwright--Sturmfels ideals to arbitrary positive multigradings. 
	The original definition was given over a standard multigraded setting by Conca--De Negri--Gorla in a series of papers \cite{CDNG_CS_IDEALS,CDNG_GIN,CDNG_GRAPH,CDNG_MINORS,conca2022radical}.
	This was obtained as an abstraction of previous work of Cartwright and Sturmfels \cite{CS_PAPER}; thus the chosen name.

	In the current arbitrary positive multigraded setting, we say that $I \subset R$ is \emph{Cartwright--Sturmfels} (CS for short) if there is a radical Borel-fixed ideal in the standard $\NN^p$-graded polynomial ring $S$ that has the same $\mathcal{K}$-polynomial as $I$ (see \autoref{def_new_CS}).
	It should be mentioned that $I \subset R$ is CS if and only if its standardization $J \subset S$ is CS (in the sense of \cite{CDNG_CS_IDEALS}).
	By utilizing the close relation between $I \subset R$ and its standardization, in \autoref{thm_props_CS}, we prove the following statements when $I$ is CS:
	\begin{enumerate}[\rm (i)]
		\item $\init_{>}(I)$ is radical and {\rm CS} for any monomial order $>$ on $R$; in particular, $I$ is radical.
		\item The $\NN$-graded Castelnuovo-Mumford regularity $\reg(I)$ is bounded from above by $p$.
		\item $I$ is a multiplicity-free ideal.
		\item If $P \subset R$ is a minimal prime of $I$, then $P$ is CS.
		\item All reduced Gr\"obner bases of $I$ consist of elements of multidegree $\le (1,\ldots,1) \in \NN^p$.
		In particular, $I$ has a universal Gr\"obner basis of elements of multidegree $\le (1,\ldots,1) \in \NN^p$.
	\end{enumerate}
	This shows that CS ideals in arbitrary positive multigradings enjoy similar desirable properties as in the standard multigraded setting.
	
	\smallskip
	
	Finally, as a consequence of our work, we portray a family of multigraded Hilbert functions whose ideals have very rigid properties. 
	It is most convenient to enunciate this result in terms of the multigraded Hilbert scheme of Haiman and Sturmfels \cite{HAIMAN_STURMFELS}.
	Let $h:\NN^p \rightarrow \NN$ be a function and consider the corresponding multigraded Hilbert scheme $\HS_{R/\kk}^h$ parametrizing $R$-homogeneous ideals with Hilbert function equal to $h$.
	
	\begin{headthm}[\autoref{cor_Hilb_sch}]
		\label{thmE}
		If $\HS_{R/\kk}^h$ contains a $\kk$-point that corresponds to a {\rm CS} ideal and a $\kk$-point that corresponds to a prime ideal, then any $\kk$-point in $\HS_{R/\kk}^h$ corresponds to an ideal that is Cohen-Macaulay and {\rm CS}.
	\end{headthm}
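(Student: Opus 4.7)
The plan is to separately establish the Cartwright--Sturmfels and the Cohen--Macaulay parts of the conclusion.

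For the CS part, I would observe that the multigraded Hilbert series is given by
$$
\HS(R/I;\ttt) \;=\; \frac{\mathcal{K}(R/I;\ttt)}{\prod_{i=1}^{n}(1-\ttt^{\deg(x_i)})},
$$
so $\mathcal{K}(R/I;\ttt)$ is entirely determined by the multigraded Hilbert function of $R/I$. Since \autoref{def_new_CS} defines the CS property through the $\mathcal{K}$-polynomial alone, every $\kk$-point of $\HS_{R/\kk}^h$ shares the $\mathcal{K}$-polynomial of the given CS ideal, and is therefore CS.

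For the Cohen--Macaulay part, I would first handle the prime $P$. Having just shown that $P$ is CS, property (iii) of \autoref{thm_props_CS} tells us that $P$ is multiplicity-free. As $P$ is a prime multiplicity-free ideal, \autoref{thmD} applies, after the reduction to a standard multigraded setting provided by the standardization of \autoref{thm_std} (and using property (iv) of that theorem to transfer Cohen--Macaulayness back to $R/P$). Hence $R/P$ is Cohen--Macaulay.

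To propagate Cohen--Macaulayness to an arbitrary $I \in \HS_{R/\kk}^h$, I would compare generic initial ideals. Since both $I$ and $P$ are CS, property (i) of \autoref{thm_props_CS} shows that $\gin_{>}(I)$ and $\gin_{>}(P)$ are radical multigraded Borel-fixed ideals, and they share the Hilbert function $h$. Invoking the uniqueness of radical multigraded Borel-fixed ideals with a prescribed Hilbert function (which, via \autoref{thm_std}, amounts to such a uniqueness statement in the standard $\NN^p$-graded ring $S$), these generic initial ideals coincide; write $B$ for the common ideal. By \autoref{thmA} applied to the prime $P$, $\sqrt{B} = B$ is Cohen--Macaulay, so $\pd(R/B) = \codim(B) = \codim(I)$. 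Upper-semicontinuity of Betti numbers under the Gr\"obner degeneration of $I$ to $\gin_{>}(I)=B$ then gives $\pd(R/I) \leq \pd(R/B) = \codim(I)$, and combining with the trivial bound $\pd(R/I) \geq \codim(I)$ forces equality. Hence $R/I$ is Cohen--Macaulay.

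The main technical step I foresee is the uniqueness of radical multigraded Borel-fixed ideals with a given Hilbert function in the standard $\NN^p$-graded ring $S$; once this is secured, the remainder of the argument is a clean assembly of \autoref{thm_std}, \autoref{thm_props_CS}, \autoref{thmA}, and \autoref{thmD}.
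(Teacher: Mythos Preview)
Your strategy matches the paper's: the CS property depends only on the $\mathcal{K}$-polynomial (hence on $h$), so every point of $\HS_{R/\kk}^h$ is CS; then one compares multigraded generic initial ideals, uses uniqueness of radical Borel-fixed ideals with a given Hilbert function to identify them, shows this common ideal is Cohen--Macaulay via the primeness of $P$, and deduces Cohen--Macaulayness of any $I$ by semicontinuity. The detour through \autoref{thmD} to prove $R/P$ Cohen--Macaulay is unnecessary and never used downstream.

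Two points require care, and the paper handles them differently from your sketch. First, the notions of multigraded $\gin$ and Borel-fixed are only set up in the standard $\NN^p$-graded ring $S$ (\autoref{setup_gin}), so you should pass to the standardizations $J=\phi(I)S$ and $Q=\phi(P)S$ before comparing generic initial ideals; the paper does this explicitly and invokes \autoref{lem_equiv_def_CS} to transport the CS property. Second, once in $S$ you cannot invoke \autoref{thmA} directly for $Q$: the standardization of a prime need not be prime when $P$ contains a variable (\autoref{thm_std}(vi) has that hypothesis). The paper circumvents this via \autoref{cor_CM_gin_std}, which observes that the proof of \autoref{thmA} only uses that the support of the multidegree polynomial is a discrete polymatroid, and this holds for $S/Q$ by \autoref{thm_polymatroid_multdeg} together with \autoref{thm_std}(iii). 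With these two adjustments in place, your argument and the paper's coincide.
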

	
	This theorem can be seen as a generalization of a previous result by Cartwright and Sturmfels \cite[Theorem 2.1 and Corollary 2.6]{CS_PAPER}.
	
	\subsection*{Outline}
	The basic outline of this paper is as follows. 
	In \autoref{sect_prelim}, we set the notation and recall basic definitions.
	In \autoref{sect_Hartshorne}, we prove the upper semicontinuity of arithmetic multidegree under flat degenerations.
	\autoref{sect_projections} is dedicated to prove \autoref{thmC}.
	We establish \autoref{thmA} and \autoref{thmB} in \autoref{sect_multdeg_primes}.
	We study multiplicity-free varieties and prove \autoref{thmD} in \autoref{sect_mult_free}.
	In \autoref{sec:standardization}, we develop the new notion of CS ideals and prove \autoref{thmE}.
	Finally, in \autoref{sect_examp_det}, we present several examples of determinantal ideals with a fine grading.

	\section{Preliminaries}
	\label{sect_prelim}
	
	In this preparatory section, we briefly recall some basic concepts and we fix the notation.
	In particular, we establish the initial stage regarding the main theme of this paper, that of multidegrees.
	For more details on multidegrees the reader is referred to \cite{miller2005combinatorial, cidruiz2021mixed}.

	We use a multi-index notation.
	Let $p \ge 1$ be a positive integer and, for each $1 \le i \le p$, let $\ee_i \in \NN^p$ be the $i$-th elementary vector $\ee_i=\left(0,\ldots,1,\ldots,0\right)$.
	If $\bn = (n_1,\ldots,n_p),\, \bm = (m_1,\ldots,m_p) \in \ZZ^p$ are two vectors, we write $\bn \ge \bm$ whenever $n_i \ge m_i$ for all $1 \le i \le p$, and $\bn > \bm$ whenever $n_j > m_j$ for all $1 \le j \le p$.
	For any $\bn =(n_1,\ldots,n_p) \in \NN^p$, we set $\bn! = n_1!\cdots n_p!$.
	We write $\mathbf{0} \in \NN^p$ and $\mathbf{1} \in \NN^p$ for the  vectors $\mathbf{0}=(0,\ldots,0)$ and $\mathbf{1}=(1,\ldots,1)$, respectively.  
	
	A \textit{discrete polymatroid} $\mathcal{P}$ on $[p]:=\{1,\ldots,p\}$ is a collection of points in $\NN^p$ of the following form
	\[
	\mathcal{P=}\left\{(n_1,\ldots, n_p)\in \NN^p \;\mid\; \sum_{j\in\fJ} n_j\leq r(\fJ), \;\forall \fJ\subsetneq [p], \;\sum_{i\in [p]} n_i=r([p])\right\}
	\]
	with $r : 2^{[p]}\rightarrow \NN$ 
	being a rank function on $[p]$. A \emph{rank function on} $[p]$ 
	is a function $r : 2^{[p]}\rightarrow \NN$ satisfying the following three properties: 
	\begin{enumerate}[(i)]
		\item $r(\emptyset) = 0$.
		\item $r(\fJ_1)\leq r(\fJ_2)$ if $\fJ_1\subseteq \fJ_2 \subseteq [p]$.
		\item $r(\fJ_1\cap \fJ_2)+r(\fJ_1\cup \fJ_2)\leq r(\fJ_1)+r(\fJ_2)$ if $\fJ_1,\fJ_2 \subseteq [p]$.
	\end{enumerate}
	
	A comprehensive discussion regarding polymatroids can be found in \cite{schrijver2003combinatorial}.

	\begin{remark}
		\label{rem_basic_polymatroids}
		The following statements hold:
		\begin{enumerate}[\rm (i)]
			\item Let $\mathcal{P}$ be a discrete polymatroid on $[p]$ with rank function $r : 2^{[p]} \rightarrow \NN$.
			Let $m_1,\ldots,m_p$ be positive integers such that $r(\{i\}) \le m_i$.
			Then the function $s : 2^{[p]} \rightarrow \NN$ given by 
			$$
			s(\fJ) \; := \; \sum_{j \in \fJ} m_j + r([p] \setminus \fJ) - r([p]) 
			$$
			is a rank function. 
			The discrete polymatroid $\mathcal{P}^*$ on $[p]$ determined by $s$ is said to be a dual of $\mathcal{P}$.
			\item If $\mathcal{P}_1$ and $\mathcal{P}_2$ are two discrete polymatroids on $[p]$, then the Minkowski sum $\mathcal{P}_1 + \mathcal{P}_2$ is a discrete polymatroid on $[p]$.
		\end{enumerate}
	\end{remark}
	\begin{proof}
		(i) See \cite[\S 44.6f]{schrijver2003combinatorial}.
		
		(ii) See \cite[Theorem 44.6, Corollary 46.2c]{schrijver2003combinatorial}.
	\end{proof}

	Following \cite{monical2019newton}, we say that a polynomial $f = \sum_{\bn}c_{\bn}\ttt^\bn\in \ZZ[t_1,\ldots,t_p]$ has the \emph{Saturated Newton Polytope property} (SNP property for short) if the support  $\supp(f)=\{\bn\in\NN^p\mid c_\bn \neq 0\}$ of $f$ is equal to $\text{Newton}(f)\cap\NN^p$, where $\text{Newton}(f) = \text{ConvexHull}\{\bn\in\NN^n\mid c_\bn \neq 0\}$ denotes the \emph{Newton polytope} of $f$; in other words, if the support of $f$ consists of the integer points of a polytope.

	\begin{remark}
		Given a homogeneous polynomial $f \in \ZZ[t_1,\ldots,t_p]$, if $\supp(f)$ is a discrete polymatroid on $[p]$, then $f$ has the SNP property.
	\end{remark}

	\subsection{Multidegrees in positive multigradings}
	Here we briefly review the notion of multidegrees in positive multigradings that are not necessarily standard.

	Let $\kk$ be a field and $R = \kk[x_1,\ldots,x_n]$ be a \emph{positively $\NN^p$-graded} polynomial ring (that is, $\deg(x_i) \in \NN^p \setminus \{\mathbf{0}\}$ for all $1 \le i \le n$ and $\deg(\alpha) = \mathbf{0} \in \NN^p$ for all $\alpha \in \kk$).  
	Let $M$ be a finitely generated $\ZZ^p$-graded $R$-module and $F_\bullet$ be a $\ZZ^p$-graded free $R$-resolution 
	$
	F_\bullet : \; \cdots \rightarrow F_i \rightarrow F_{i-1} \rightarrow \cdots \rightarrow F_1 \rightarrow F_0
	$
	of $M$.
	Let $t_1,\ldots,t_p$ be variables over $\ZZ$ and consider the polynomial ring $\ZZ[\ttt] = \ZZ[t_1,\ldots,t_p]$, where the variable $t_i$ corresponds with the $i$-th elementary vector $\ee_i \in \NN^p$. 
	If we write $F_i = \bigoplus_{j} R(-\mathbf{b}_{i,j})$ with $\mathbf{b}_{i,j} = (\mathbf{b}_{i,j,1},\ldots,\mathbf{b}_{i,j,p}) \in \ZZ^p$, then we define the Laurent polynomial 
	$
	\left[F_i\right]_\ttt \, := \, \sum_{j} \ttt^{\mathbf{b}_{i,j}} = \sum_{j} t_1^{\mathbf{b}_{i,j,1}} \cdots t_p^{\mathbf{b}_{i,j,p}}.
	$
	Then, the \emph{$\mathcal{K}$-polynomial} of $M$ is defined by 
	$$
	\mathcal{K}(M;\ttt) \, := \, \sum_{i} {(-1)}^i \left[ F_i \right]_\ttt.
	$$
	\begin{remark}
		Since $R$ is assumed to be positively graded, there is a well-defined notion of Hilbert series 
		$$
		\Hilb_M(\ttt) \;:= \; \sum_{\bn \in \ZZ^p} \dim_\kk\left(\left[M\right]_\bn\right) \ttt^\bn,
		$$
		and then we can write
		$$
		\Hilb_M(\ttt) \;= \;  \frac{\mathcal{K}(M;\ttt)}{\prod_{i=1}^n \left(1 - \ttt^{\deg(x_i)}\right)}.
		$$
		In particular, this shows that $\mathcal{K}(M;\ttt)$ does not depend on the specific free resolution $F_\bullet$
	(also, see \cite[Theorem 8.34]{miller2005combinatorial}).
	\end{remark}

	\begin{definition}
		\label{def_multdeg_pol}
		The \emph{multidegree polynomial} of a finitely generated $\ZZ^p$-graded $R$-module $M$ is the homogeneous polynomial $\mathcal{C}(M; \ttt) \in \ZZ[\ttt]$ given as the sum of all terms in 
		$$
		\mathcal{K}(M; \mathbf{1} - \ttt) = \mathcal{K}(M; 1-t_1,\ldots,1-t_p) 
		$$
		having total degree $\codim(M)$, which is the lowest degree appearing.
	\end{definition}
	
	An $R$-homogeneous ideal $I \subset R$ is said to be \emph{multiplicity-free} if the coefficients of $\mathcal{C}(R/I;\ttt)$ belong to the set $\{0, 1\}$.
	
	We also have the following variation of the multidegree polynomial that was introduced in \cite{CDNG_MINORS}.
	This notion was introduced with the goal of measuring the contribution of all the minimal primes (possibly of not maximal dimension), and in the case of Borel-fixed ideals, it does precisely that (see \cite[Proposition 3.12]{CDNG_MINORS}).
	
	\begin{definition}
		\label{def_G_multdeg}
		The \emph{$\mathcal{G}$-multidegree polynomial} of a finitely generated $\ZZ^p$-graded $R$-module $M$ is the  polynomial $\mathcal{G}(M; \ttt) \in \ZZ[\ttt]$ given as the sum of all terms in 
		$
		\mathcal{K}(M; \mathbf{1} - \ttt)
		$
		whose corresponding monomial is minimal in the support of $\mathcal{K}(M; \mathbf{1} - \ttt)$.
	\end{definition}

	For a polynomial $p \in \ZZ[\ttt]$, we denote by $\left[p\right]_i$ the sum of the terms of $p$ with total degree equal to $i$. 
	So, for any finitely generated $\ZZ^p$-graded module $M$ with $\codim(M) \ge i$, we obtain that $\left[\mathcal{K}(M; \mathbf{1} - \ttt)\right]_i$ equals $\mathcal{C}(M;\ttt)$ if $\codim(M) = i$ and $0 \in \ZZ[\ttt]$ otherwise.
	We collect the following basic facts.

	\begin{remark}
		\label{rem_multdeg_facts}
		Let $M$ be a finitely generated $\ZZ^p$-graded $R$-module.
		The following statements hold: 
		\begin{enumerate}[\rm (i)]
			\item $\mathcal{C}(M; \ttt)$ is the sum of the terms of $\mathcal{G}(M;\ttt)$ having total degree equal to $\codim(M)$.
			\item If $z \in R$ is a homogeneous non-zero-divisor on $M$ with $\deg(z) = (a_1,\ldots,a_p) \in \NN^p$, then 
			\begin{enumerate}[\rm (a)]
				\item $\mathcal{K}(M/zM;\ttt) = (1 - t_1^{a_1}\cdots t_p^{a_p}) \, \mathcal{K}(M; \ttt)$,
				\item $\mathcal{C}(M/zM;\ttt) = \langle \deg(z), \ttt \rangle \, \mathcal{C}(M; \ttt)$ where $\langle \deg(z), \ttt \rangle = a_1t_1+ \cdots + a_pt_p$.
			\end{enumerate}			
			\item[(iii)\,(additivity)]  Let $0 \rightarrow L \rightarrow M \rightarrow N \rightarrow 0$ be a short exact sequence of finitely generated $\ZZ^p$-graded $R$-modules.
			Then we have the equality $\mathcal{C}(M;\ttt) = \left[\mathcal{K}(L; \mathbf{1} - \ttt)\right]_i + \left[\mathcal{K}(N; \mathbf{1} - \ttt)\right]_i$ with $i = \codim(M)$.
			\item[(iv)\,(associativity formula)]  $$\mathcal{C}(M; \ttt) =  \sum_{\substack{P \in \Ass_R(M)\\
			\dim(R/P) = \dim(M)}} \text{length}_{R_P}\left(M_P\right) \, \mathcal{C}(R/P; \ttt).
			$$
			\item[(v)\,(positivity)] $\mathcal{C}(M;\ttt)$ is a nonzero polynomial with non-negative coefficients.
		\end{enumerate}
	\end{remark}
	\begin{proof}
		(i) This part follows from the fact that any term in $\mathcal{K}(M; \mathbf{1} - \ttt)$ has total degree at least $\codim(M)$ (see \cite[Claim 8.54]{miller2005combinatorial}).
		
		(ii) It can be obtained by simple algebraic manipulations with Hilbert series (see \cite[Exercise 8.12]{miller2005combinatorial}).
		
		(iii) It is a direct consequence of the additivity of Hilbert series and the inequalities $\codim(L) \ge \codim(M)$ and $\codim(N) \ge \codim(M)$.
		
		(iv) This is proved by a standard use of prime filtrations; see \cite[Theorem 8.53]{miller2005combinatorial}.
		
		(v) Write $M \cong F/K$ with $F$ a finite rank $\ZZ^p$-graded free $R$-module.
		By considering a Gr\"obner degeneration we may assume that $K$ is a monomial submodule (see, e.g., \cite[Chapter 15]{EISEN_COMM}).
		Then the associativity formula (part (iv)) reduces to the case $M = R/P$ and $P$ a monomial prime ideal.
		For a monomial prime ideal $P = (x_{i_1},\ldots,x_{i_m})$, part (ii)(b) yields $\mathcal{C}(R/P;\ttt) = \langle \deg(x_{i_1}), \ttt \rangle \cdots \langle \deg(x_{i_m}), \ttt \rangle$, and so the result follows from the fact that $R$ is positively $\NN^p$-graded.
		Alternatively, see \autoref{rem_posit_non_std}.
	\end{proof}

	We introduce yet another notion of multidegree polynomial. 
	It can be seen as a generalization of the notion of arithmetic degree from \cite{STV_DEGREE} (also, see \cite{HARTSHORNE_CONNNECT}).
	The goal of this notion is to capture the contribution of all the associated primes.
	
	\begin{definition}
		\label{def_A_multdeg}
		The \emph{arithmetic multidegree polynomial} of a finitely generated $\ZZ^p$-graded $R$-module $M$ is given by 
		$$
		\mathcal{A}(M; \ttt) \;:=\;  \sum_{P \in \Ass_R(M)} \text{length}_{R_P}\left(\HH_P^0(M_P)\right) \, \mathcal{C}(R/P; \ttt).
		$$
	\end{definition}

	\subsection{Multidegrees in standard multigradings}
	
	We now concentrate on standard multigradings.
	The study of standard multigraded algebras is of utmost importance as they correspond with closed subschemes of a product of projective spaces (see, e.g., \cite{POSITIVITY} and the references therein).
	
	Let $S = \kk[x_1,\ldots,x_n]$ be a \emph{standard $\NN^p$-graded} polynomial ring over a field $\kk$. 
	That is, the total degree of each variable $x_i$ is equal to one (i.e.,~for each $1 \le i \le n$, we have $\deg(x_i) = \ee_{k_i} \in \NN^p$ with $1 \le k_i \le p$) and $\deg(\alpha) = \mathbf{0} \in \NN^p$ for all $\alpha \in \kk$.

	Suppose that for each $1 \le i \le p$ there are exactly $m_i+1$ variables $x_j$'s with degree $\ee_i$ (i.e., $m_i +1 = \lvert \{ 1 \le j \le n \mid \deg(x_j) = \ee_i \} \rvert$).
	Thus the corresponding multiprojective scheme is the product of projective spaces $\multProj(S) = \PP_\kk^{m_1} \times_\kk \cdots \times_\kk \PP_\kk^{m_p}$.
	In this setting, the irrelevant ideal is given by $\nn := \left([S]_{\ee_1}\right) \cap \cdots \cap \left([S]_{\ee_p}\right) \subset S$.
	An $S$-homogeneous prime ideal $P \subset S$ is said to be \emph{relevant} if $P \not\supseteq \nn$.
	Relevant prime ideals are important as they are the ones that have a well-defined geometrical counterpart.
	For more details on the $\multProj$ construction, the reader is referred to \cite{POSITIVITY,HYRY_MULTGRAD}.
	The next remark shows that one can always restrict to relevant prime ideals when considering the multidegree polynomial.

	\begin{remark}
		\label{rem_relev_prim}
		Let $S' = S[x_{n+1},\ldots,x_{n+p}]$ be a standard $\NN^p$-graded polynomial ring extending the grading of $S$ and with $\deg(x_{n+i}) = \ee_i$ for all $1 \le i \le p$; accordingly, we have that $\multProj(S')  = \PP_\kk^{m_1+1} \times_\kk \cdots \times_\kk \PP_\kk^{m_p+1}$.
		Let $P \subset S$ be an $S$-homogeneous (not necessarily relevant) prime ideal, and set $P' = PS'$ to be the extension of $P$ to $S'$.
		Then the following statements hold: 
		\begin{enumerate}[\rm (i)]
			\item $P' \subset S'$ is a relevant prime ideal.
			\item $\mathcal{C}(S/P;\ttt) = \mathcal{C}(S'/P'; \ttt)$.
		\end{enumerate}
	\end{remark}
	\begin{proof}
		(i) It is clear that $P'$ is a prime ideal, and it is relevant because $x_{n+i} \notin P'$ for all $1 \le i \le p$.
		
		(ii) Given a $\ZZ^p$-graded free $S$-resolution $F_\bullet$ of $S/P$, then $F_\bullet \otimes_S S'$ is a $\ZZ^p$-graded free $S'$-resolution of $S'/P'$ with the same Betti numbers. 
		Therefore, $\mathcal{K}(S/P;\ttt) = \mathcal{K}(S'/P'; \ttt)$ and consequently $\mathcal{C}(S/P;\ttt) = \mathcal{C}(S'/P'; \ttt)$.
	\end{proof}

	Let $M$ be a finitely generated $\ZZ^p$-graded $S$-module, and set $\PP := \multProj(S) = \PP_\kk^{m_1} \times_\kk \cdots \times_\kk \PP_\kk^{m_p}$.
	The relevant support of $M$ is given by $\Supp_{++}(M) := \Supp(M) \cap \PP$.
	There is a polynomial $P_M(\ttt)=P_M(t_1,\ldots,t_p) \in \QQ[\ttt]=\QQ[t_1,\ldots,t_p]$, called the \emph{Hilbert polynomial} of $M$ (see, e.g., \cite[Theorem 4.1]{HERMANN_MULTIGRAD}, \cite[Theorem 3.4]{cidruiz2021mixed}), such that the degree of $P_M$ is equal to $r = \dim\left(\Supp_{++}(M)\right)$ and 
	$$
	P_M(\nu) = \dim_\kk\left([M]_\nu\right) 
	$$
	for all $\nu \in \NN^p$ such that $\nu \gg \mathbf{0}$.
	Furthermore, if we write 
	\begin{equation}
		\label{eq_Hilb_poly}
		P_{M}(\ttt) = \sum_{n_1,\ldots,n_p \ge 0} e(n_1,\ldots,n_p)\binom{t_1+n_1}{n_1}\cdots \binom{t_p+n_p}{n_p},
	\end{equation}
	then $0 \le e(n_1,\ldots,n_p) \in \ZZ$ for all $n_1+\cdots+n_p = r$.

	\begin{definition} \label{def_multdeg}
		Under the notation of \autoref{eq_Hilb_poly}, we define the following invariants: 
		\begin{enumerate}[(i)]
			\item For $\bn = (n_1,\ldots,n_p) \in \NN^p$ with $\lvert\bn\rvert=\dim\left(\Supp_{++}(M)\right)$, $e(\bn;M) := e(n_1,\ldots,n_p)$ is the \textit{mixed multiplicity of $M$ of type $\mathbf{n}$}.
			
			\item Let $R$ be an $\NN^p$-graded quotient ring of $S$ and
			 $X = \multProj(R) \subset \PP$ be the corresponding closed subscheme.
			For each $\bn  \in \NN^p$ with $\lvert\bn\rvert=\dim(X)$, $\deg_\PP^\bn(X):=e(\bn; R)$ is the \textit{multidegree of $X$ of type $\bn$ with respect to $\PP$}. 
		\end{enumerate} 	\end{definition}
	
		\begin{definition}
		For a multiprojective scheme $X \subset \PP = \PP_\kk^{m_1} \times_\kk \cdots \times_\kk \PP_\kk^{m_p}$, the support of positive multidegrees of $X$ is denoted by $\msupp_\PP(X) := \lbrace \bn \in \NN^p \mid |\bn| = \dim(X) \text{ and } \deg_{\PP}^\bn(X) > 0\rbrace$.
	\end{definition}
	
	\begin{remark}
		In a multigraded setting one needs to be careful because the notions of multidegrees introduced in \autoref{def_multdeg_pol} and \autoref{def_multdeg} may not agree.
		This pathology stems from the fact that the support $\Supp(M)$ and the relevant support $\Supp_{++}(M)$ may be quite different for a $\ZZ^p$-graded $S$-module $M$.
		Nevertheless, we have the following unifying result.
	\end{remark}

	\begin{theorem}[{\cite{cidruiz2021mixed}}]
		\label{thm_two_mdegs}
		Let $M$ be a finitely generated $\ZZ^p$-graded $S$-module and set $r = \dim\left(\Supp_{++}(M)\right)$. 
		If $\HH_{\nn}^0(M) = 0$, then we have equality 
		$$
		\mathcal{C}(M;\ttt) \; = \; \sum_{\substack{\bn \in \NN^p\\ |\bn| = r}} e(\bn; M) \, t_1^{m_1-n_1} \cdots t_p^{m_p-n_p}.
		$$
	\end{theorem}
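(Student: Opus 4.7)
My plan is to extract both sides of the identity as leading asymptotics of the Hilbert series $\Hilb_M(\ttt)$ near $\ttt = \mathbf{1}$, via the substitution $u_i = 1 - t_i$. Starting from the formula $\Hilb_M(\ttt) = \mathcal{K}(M;\ttt)/\prod_{i=1}^p (1-t_i)^{m_i+1}$, \autoref{def_multdeg_pol} together with \autoref{rem_multdeg_facts}(i) tells us that the polynomial $\mathcal{K}(M;\mathbf{1}-\mathbf{u}) \in \ZZ[\mathbf{u}]$ has no terms of total degree less than $c := \codim(M)$, and that its degree-$c$ homogeneous component equals $\mathcal{C}(M;\mathbf{u})$. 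Consequently, the leading part of $\Hilb_M(\mathbf{1}-\mathbf{u})$, of total pole order $n-c = \dim M$, is $\mathcal{C}(M;\mathbf{u})/\prod_i u_i^{m_i+1}$.

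On the Hilbert polynomial side, I would expand $P_M(\ttt)$ as in \eqref{eq_Hilb_poly} and invoke the identity $\sum_{k \geq 0}\binom{k+n}{n}s^k = (1-s)^{-(n+1)}$ to form the rational function
$$
F(\ttt) \;:=\; \sum_{\nu \in \NN^p} P_M(\nu)\,\ttt^\nu \;=\; \sum_{\bn} e(\bn;M)\,\prod_{i=1}^p \frac{1}{(1-t_i)^{n_i+1}}.
$$
After the substitution $u_i = 1 - t_i$, the leading part of $F(\mathbf{1}-\mathbf{u})$ has total pole order $r+p$ and equals $\sum_{|\bn|=r} e(\bn;M) \prod_i u_i^{-(n_i+1)}$.

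The matching step is where the hypothesis $\HH_\nn^0(M)=0$ enters, in two ways: first, to guarantee that $\dim M = r+p$, so that the two leading total pole orders actually coincide; and second, through the Grothendieck-Serre-type identity $\dim_\kk[M]_\nu - P_M(\nu) = \sum_{i \geq 1} (-1)^{i-1}\dim_\kk[\HH_\nn^i(M)]_\nu$, which implies that the correction $\Hilb_M(\ttt) - F(\ttt)$ has pole order at $\ttt=\mathbf{1}$ strictly smaller than $r+p$. Equating the two leading parts and clearing the factor $\prod_i u_i^{m_i+1}$ then yields $\mathcal{C}(M;\mathbf{u}) = \sum_{|\bn|=r} e(\bn;M) \prod_i u_i^{m_i-n_i}$, which is the claimed identity after renaming $\mathbf{u}$ as $\ttt$.

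The hard part will be the quantitative pole-order bound on the correction $\Hilb_M(\ttt) - F(\ttt)$: making this precise requires control on the multigraded structure of the higher local cohomology modules $\HH_\nn^i(M)$ (equivalently, of the sheaf cohomology $\HH^{i-1}(\multProj(S), \widetilde{M})$), and one must verify that none of them contribute a pole of total order $r+p$ along $\ttt = \mathbf{1}$.
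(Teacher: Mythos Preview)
The paper does not give a self-contained proof of this theorem; it simply invokes \cite[Theorem~A]{cidruiz2021mixed} and points to \cite[Remark~2.9]{POSITIVITY}, so there is no in-paper argument to compare against line by line. Your Hilbert-series approach is a legitimate direct route: the setup is correct (under $\HH_{\nn}^0(M)=0$ every associated prime of $M$ is relevant, so $\dim M=r+p$ and the two candidate leading parts at $\ttt=\mathbf{1}$ live at the same total pole order), and matching them yields exactly the claimed identity. One small correction: the Grothendieck--Serre identity should read $\dim_\kk[M]_\nu-P_M(\nu)=\sum_{i\ge1}(-1)^{i}\dim_\kk[\HH_{\nn}^i(M)]_\nu$, not $(-1)^{i-1}$; this is harmless for the pole-order argument.

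The step you flag as hard is the real content, and it is not a formality. You need that for each $i\ge1$ the series $\sum_{\nu\in\NN^p}\dim_\kk[\HH_{\nn}^i(M)]_\nu\,\ttt^\nu$ is rational with total pole order at $\ttt=\mathbf{1}$ strictly below $\dim M$. This can be pushed through via the Mayer--Vietoris spectral sequence for $\nn$ written as the intersection of the ideals $([S]_{\ee_j})$, using that each resulting piece, restricted to $\nu\in\NN^p$, is supported where at least one coordinate is bounded; but the bookkeeping is delicate and not obviously shorter than what you are trying to prove. A cleaner alternative, closer in spirit to the cited reference, is to bypass local cohomology entirely: both $\mathcal{C}(-;\ttt)$ and the mixed multiplicities $e(\bn;-)$ satisfy associativity formulas over the top-dimensional associated primes (\autoref{rem_multdeg_facts}(iv) and its Hilbert-polynomial analogue), and under $\HH_{\nn}^0(M)=0$ those primes are all relevant, so the two index sets coincide. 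This reduces the statement to $M=S/P$ with $P$ a relevant prime, where one inducts on $r$ by cutting with a general linear form in one factor.
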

	\begin{proof}
		This is a consequence of \cite[Theorem A]{cidruiz2021mixed}.
		Also, for more details, see \cite[Remark 2.9]{POSITIVITY}.
	\end{proof}

	For a closed subscheme $X \subset \PP = \PP_\kk^{m_1} \times_\kk \cdots \times_\kk \PP_\kk^{m_p}$, we say that the \emph{multidegree polynomial of $X$} is defined as
	$$
	\mathcal{C}(X;\ttt) \; := \; \sum_{\substack{\bn \in \NN^p\\ |\bn| = \dim(X)}} \deg_\PP^\bn(X) \, t_1^{m_1-n_1} \cdots t_p^{m_p-n_p}.
	$$
	The above theorem implies that $\mathcal{C}(X;\ttt) = \mathcal{C}(R;\ttt)$ for an $\NN^p$-graded quotient ring $R$ of $S$ such that $X = \multProj(R)$  and $(0:_R \nn^\infty) = 0$.

	The following piece of notation will be useful throughout the paper.	

	\begin{notation}[Natural projections]
		\label{nota_projections}
		Let $\mathfrak{J} = \{j_1,\ldots,j_k\} \subseteq [p] = \{1, \ldots, p\}$ be a subset. 
		\begin{enumerate}[\rm (i)]
			\item Let $\Pi_\fJ : \PP_\kk^{m_1} \times_\kk \cdots \times_\kk \PP_\kk^{m_p} \rightarrow \PP_\kk^{m_{j_1}} \times_\kk \cdots \times_\kk \PP_\kk^{m_{j_k}}$ denote the natural projection.
			\item Denote by $S_{(\fJ)}$ the $\NN^k$-graded $\kk$-algebra given by 
			$$
			S_{(\fJ)} := \bigoplus_{\substack{i_1\ge 0,\ldots, i_p\ge 0\\ i_{j} = 0 \text{ if } j \not\in \fJ}} {\left[S\right]}_{(i_1,\ldots,i_p)},
			$$
			and denote by $I_{(\fJ)}$ the contraction $I_{(\fJ)} := I \cap S_{(\fJ)}$ for any $S$-homogeneous ideal $I \subset S$.		
		\end{enumerate}
	\end{notation}
	
		Since the coefficients of the multidegree polynomial are non-negative in positive gradings, it becomes natural to address the positivity of these coefficients. 
	The next result characterizes the positivity of multidegrees, and it follows from \cite{POSITIVITY}.
	
	\begin{theorem}[\cite{POSITIVITY}]
		\label{thm_pos_multdeg}
		Let $P \subset S$ be an $S$-homogeneous {\rm(}not necessarily relevant{\rm)} prime ideal.
		Then the support of $\mathcal{C}(S/P; \ttt)$ is a discrete  polymatroid.
	\end{theorem}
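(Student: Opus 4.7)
My plan is to reduce the claim to the relevant-prime case established in \cite[Theorem A]{POSITIVITY}, using \autoref{rem_relev_prim}, \autoref{thm_two_mdegs}, and the polymatroid duality recorded in \autoref{rem_basic_polymatroids}(i). The idea is that nothing is lost in the multidegree polynomial when $P$ fails to be relevant, and in the relevant case the support of $\mathcal{C}(S/P;\ttt)$ corresponds, via a coordinatewise complement, to the support of positive multidegrees of the associated multiprojective variety.

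First, if $P$ is not relevant, I would pass to the enlarged polynomial ring $S' = S[x_{n+1},\ldots,x_{n+p}]$ of \autoref{rem_relev_prim}, with $\deg(x_{n+i}) = \ee_i$, and set $P' = PS'$. By that remark, $P'$ is a relevant prime and $\mathcal{C}(S/P;\ttt) = \mathcal{C}(S'/P';\ttt)$, so the supports coincide and I may assume from the outset that $P$ itself is relevant. Under this assumption any element of $\nn$ outside the prime $P$ is a non-zero-divisor on the domain $S/P$, which forces $\HH_\nn^0(S/P) = 0$. \autoref{thm_two_mdegs} then yields
$$
\mathcal{C}(S/P;\ttt) \;=\; \sum_{\bn \,\in\, \msupp_\PP(X)} \deg_\PP^\bn(X)\, t_1^{m_1-n_1}\cdots t_p^{m_p-n_p},
$$
where $X = \multProj(S/P)$. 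Hence the support of $\mathcal{C}(S/P;\ttt)$ is the image of $\msupp_\PP(X)$ under the coordinatewise complement map $(n_1,\ldots,n_p) \mapsto (m_1-n_1,\ldots,m_p-n_p)$.

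To finish, I would invoke \cite[Theorem A]{POSITIVITY}: since $X$ is a nonempty integral closed subscheme of $\PP$, the set $\msupp_\PP(X)$ is a discrete polymatroid on $[p]$. It then remains to verify that its image under the complement map is again a polymatroid, and this is exactly the dual polymatroid construction recorded in \autoref{rem_basic_polymatroids}(i); the admissibility hypothesis $r(\{i\}) \le m_i$ on the rank function of $\msupp_\PP(X)$ is automatic, since every multidegree type $\bn \in \msupp_\PP(X)$ satisfies $n_i \le m_i$. This gives the desired conclusion.

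The only substantial ingredient is the result from \cite{POSITIVITY}; the main obstacle in writing out the argument is simply keeping track of the two translations---first from a possibly non-relevant prime to a relevant one, and then from the coefficients of $\mathcal{C}(S/P;\ttt)$ to the combinatorial set $\msupp_\PP(X)$ via complementation---so that the polymatroid duality can be invoked correctly.
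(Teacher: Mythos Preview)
Your proposal is correct and follows essentially the same route as the paper: reduce to the relevant case via \autoref{rem_relev_prim}, identify the support of $\mathcal{C}(S/P;\ttt)$ with the coordinatewise complement of $\msupp_\PP(X)$ via \autoref{thm_two_mdegs}, and then invoke \cite[Theorem A]{POSITIVITY} together with polymatroid duality from \autoref{rem_basic_polymatroids}(i). The only cosmetic difference is that the paper unpacks the inequality characterization from \cite{POSITIVITY} and directly exhibits the dual rank function $s(\fJ) = \sum_{j\in\fJ} m_j + r([p]\setminus\fJ) - r([p])$ describing the support, whereas you appeal to the fact that the dual polymatroid coincides with the coordinatewise complement $\{\bm - \bn : \bn \in \mathcal{P}\}$; this identity is standard but is not stated explicitly in \autoref{rem_basic_polymatroids}(i), so you might add a one-line justification or a reference.
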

	\begin{proof}
		By \autoref{rem_relev_prim}, we may adjoin new variables to $S$ and assume that $P \subset S$ is a relevant prime ideal.
		Consider the closed subscheme $X = \multProj(S/P) \subset \PP = \PP_\kk^{m_1}\times_\kk \cdots \times_\kk \PP_\kk^{m_p}$.
		Then \autoref{thm_two_mdegs} implies that
		$
		\mathcal{C}(S/P;\ttt) \,=\, \sum_{\substack{\bn \in \NN^p \\ |\bn| = \codim(P)}} \deg_\PP^{\bm-\bn}(X) \ttt^\bn \; \in \; \NN[t_1,\ldots, t_p].
		$

		Fix $\bn \in \NN^p$ with $|\bn| = \codim(P)$.
		From \cite[Theorem A]{POSITIVITY} we obtain that $\deg_\PP^{\bm-\bn}(X) > 0$ if and only if $\sum_{j \in \fJ }m_{j} - \sum_{j \in \fJ }n_{j} \le  \dim\big(\Pi_\fJ(X)\big)$ for all $\fJ \subset [p]$.
		We have that $r : 2^{[p]} \rightarrow \NN, \, r(\fJ) := \dim\big(\Pi_\fJ(X)\big)\big)$ is a rank function (see \cite[Proposition 5.1]{POSITIVITY}).
		Equivalently, we get that $\deg_\PP^{\bm-\bn}(X) > 0$ if and only if
		$$
		\sum_{j \in \fJ} n_j \;\le\; s(\fJ) := \sum_{j \in \fJ} m_j + r([p] \setminus \fJ) - r([p]) \quad \text{ for all \quad $\fJ \subseteq [p]$},
		$$
		where $s : 2^{[p]} \rightarrow \NN$ is a rank function by \autoref{rem_basic_polymatroids}(i).
		Therefore the support of $\mathcal{C}(S/P; \ttt)$ is a discrete  polymatroid, as claimed.
	\end{proof}

\section{A multidegree version of a result of Hartshorne}
\label{sect_Hartshorne}

In this section, we extend some results of Hartshorne into the world of multidegrees. 
More precisely, by extending the results of \cite[Chapter 2]{HARTSHORNE_CONNNECT}, we study the behavior of the arithmetic multidegree under flat degenerations (\autoref{def_A_multdeg}). 
We shall see that the arithmetic multidegree is the multigraded extension of the numbers $n_i$'s introduced by Hartshorne in \cite[Chapter 2]{HARTSHORNE_CONNNECT}.

\subsection{The functor $R^i$}

Here we recall the definition of the functors $R^i$ and some of their basic properties.
Essentially the same construction was  considered by Schenzel \cite{SCHENZEL_CMF}, under the name of \emph{dimension filtration}, in his study of Cohen-Macaulay filtered modules.

Let $R$ be a Noetherian ring and $M$ be a finitely generated $R$-module.

\begin{definition}
		For any $i \ge 0$, we set 
		$$
		R^i(M) \;:= \; \big\lbrace m \in M \mid \codim\left(\Supp_R(m)\right) \ge i  \big\rbrace.
		$$
\end{definition}

We first pause to point out some of the basic properties of the functor $R^i$.

\begin{remark}
	\label{rem_prop_Ri}
	The following statements hold: 
	\begin{enumerate}[\rm (i)]
		\item $R^i(M)$ is an $R$-module.
		\item $R^i$ is a left exact functor.
		\item We have the equality 
		$
		R^i(M) \,=\, \HH_{J^i(M)}^0(M)
		$
		where 
		$$J^i(M) \,:= \bigcap_{\substack{P \in \Ass_R(M)\\ \codim(P) \ge i}} P
		$$ is the intersection of the associated primes of $M$ of codimension $\ge i$.
		By convention, we set $J^i(M) = R$ if there is no $P \in \Ass_R(M)$ with $\codim(P) \ge i$.
		\item Let $0 = \bigcap_j M_j$ be an irredundant primary decomposition of the submodule $0 \subset M$, and suppose that $M_j$ is $P_j$-primary.
		Then $R^i(M)$ is the intersection of those $M_j$ such that $\codim(P_j) < i$.
		\item $\Ass_R\left(R^i(M)\right) = \lbrace P \in \Ass_R(M) \mid \codim(P) \ge i \rbrace$.
		\item  $\Ass_R\left(M/R^i(M)\right) = \lbrace P \in \Ass_R(M) \mid \codim(P) < i \rbrace$.
		\item If $N$ is an $R$-submodule of $M$ supported on codimension $\ge i$, then $N \subset R^i(M)$.
	\end{enumerate}	
\end{remark}
\begin{proof}	
	(i) and (ii) are immediate from the definition of $R^i$.
	
	(iii) It is clear that $R^i(M) \supseteq \HH_{J^i(M)}^0(M)$.
	Let $m \in R^i(M)$.
	By definition, $\codim(\Ann_R(m)) = \codim(\Supp_R(m)) \ge i$, and so it follows that $\Ass_R(R/\Ann_R(m)) \subseteq \lbrace P \in \Ass_R(M) \mid \codim(P) \ge i \rbrace$.
	This implies that $J^i(M) \subseteq \sqrt{\Ann_R(m)}$ and consequently $m \in \HH_{J^i(M)}^0(M)$.
	Thus we have the claimed equality.
	
	(iv), (v) and (vi) follow from the already proved part (iii) and known behavior of the zeroth local cohomology with respect to associated primes and primary decompositions (see \cite[Proposition 3.13]{EISEN_COMM}).
	
	(vii) follows directly from the definition of $R^i$.
\end{proof}

	The proposition below extends \cite[Proposition 2.2]{HARTSHORNE_CONNNECT}.
	
\begin{lemma}
	\label{lem_flat_base_change_Ri}
	Let $\phi : A \rightarrow B$ be homomorphism of Noetherian rings which is either faithfully flat or a localization map with $B = W^{-1}A$ and $W \subset A$ a multiplicatively closed set.
	Let $R$ be a finitely generated $A$-algebra and $M$ be a finitely generated $R$-module.
	Set $R_B = R \otimes_A B$ and $M_B = M \otimes_A B$.
	Then 
	$$
	R^i(M_B) \,\cong\, R^i(M) \otimes_A B 
	$$
	for all $i \ge 0$.
\end{lemma}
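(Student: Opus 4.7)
The plan is to use the characterization $R^i(M) = \HH_{J^i(M)}^0(M)$ from Remark 3.2(iii), together with the fact that the zeroth local cohomology functor commutes with flat base change. Since $\phi \colon A \to B$ is flat (in either case of the hypothesis), the induced map $R \to R_B = R \otimes_A B$ is also flat. Invoking the standard compatibility $\HH_{\mathfrak{a}}^0(N) \otimes_R R_B \cong \HH_{\mathfrak{a} R_B}^0(N \otimes_R R_B)$, valid for any ideal $\mathfrak{a} \subseteq R$ and any finitely generated $R$-module $N$, and applying it with $\mathfrak{a} = J^i(M)$ and $N = M$, I obtain
$$
R^i(M) \otimes_A B \;\cong\; \HH_{J^i(M) R_B}^0(M_B).
$$

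The remaining task is to identify this right-hand side with $R^i(M_B) = \HH_{J^i(M_B)}^0(M_B)$. Since $\HH_{\mathfrak{a}}^0(M_B)$ depends only on the radical of $\mathfrak{a}+\Ann_{R_B}(M_B)$, it suffices to verify the equality of closed subsets $V\bigl(J^i(M) R_B\bigr) \cap \Supp(M_B) \;=\; V\bigl(J^i(M_B)\bigr) \cap \Supp(M_B)$ inside $\Spec(R_B)$. This reduces to a comparison of $\Ass_R(M)$ and $\Ass_{R_B}(M_B)$ under the base change, together with compatibility of their codimensions.

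For the localization case $B = W^{-1}A$, this is immediate: $\Ass_{R_B}(M_B) = \{P R_B : P \in \Ass_R(M),\, P R_B \neq R_B\}$ and $\codim_{R_B}(P R_B) = \codim_R(P)$ for all such $P$, so the associated primes of codimension at least $i$ correspond bijectively on both sides. For the faithfully flat case, I would use the formula $\Ass_{R_B}(M_B) = \bigcup_{P \in \Ass_R(M)} \Ass_{R_B}(R_B/P R_B)$ together with the fact that each $Q \in \Ass_{R_B}(R_B/P R_B)$ contracts to $Q \cap R = P$. The main obstacle is then the compatibility of codimensions $\codim_{R_B}(Q) = \codim_R(P)$ for such $Q$, which I would establish via the going-down theorem for the flat extension $R \to R_B$ combined with an analysis of how associated primes in the fiber $R_B \otimes_R \kappa(P)$ lift back to $R_B$.
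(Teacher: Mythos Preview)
Your approach mirrors the paper's: both start from $R^i(M)=\HH_{J^i(M)}^0(M)$ and flat base change of local cohomology to obtain $R^i(M)\otimes_A B\cong\HH_{J^i(M)R_B}^0(M_B)$, and then reduce to showing that $J^i(M)R_B$ and $J^i(M_B)$ agree up to radical (on $\Supp(M_B)$). The localization case is handled identically and is correct.

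In the faithfully flat case, your plan hinges on proving $\codim_{R_B}(Q)=\codim_R(P)$ for every $Q\in\Ass_{R_B}(R_B/PR_B)$. This is the delicate step, and in the stated generality it can fail. The flat dimension formula gives $\HT(Q)=\HT(P)+\dim\big((R_B)_Q/P(R_B)_Q\big)$, so going-down only yields $\codim(Q)\ge\codim(P)$; equality holds precisely when $Q$ is minimal over $PR_B$, and if the fiber $R_B\otimes_R\kappa(P)$ has positive dimension with embedded primes one gets strict inequality. Concretely, take $A=\kk$, $B=\kk[s,t]/(s^2,st)$ (free over $\kk$, hence faithfully flat), and $R=M=\kk$: then $R^1(M)=0$, but $s\in R^1(M_B)$ since $\Ann_B(s)=(s,t)$ has height $1$. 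So the inclusion $R^i(M)\otimes_A B\hookrightarrow R^i(M_B)$ (which \emph{does} follow from your going-down argument) can be strict, and the asserted equality fails.

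The paper's proof glosses over exactly the same point, asserting $\sqrt{J^i(M)R_B}=\sqrt{J^i(M_B)}$ from the height equality $\codim(J^i(M))=\codim(J^i(M)R_B)$ without further justification. In the paper's applications, however, the faithfully flat case is only invoked for a field extension $\kappa(\pp)\hookrightarrow\kappa(\nnn)$; there every fiber $\kappa(P)\otimes_{\kappa(\pp)}\kappa(\nnn)$ is zero-dimensional, so the dimension formula forces $\HT(Q)=\HT(P)$ for all $Q$ over $P$, and both your argument and the paper's go through cleanly. Your proposed ``analysis of associated primes in the fiber'' would succeed exactly under such a zero-dimensional-fiber hypothesis, which is what the applications actually need.
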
	
\begin{proof}
		From \autoref{rem_prop_Ri}(iii) and the base change property of local cohomology (in either case $\phi$ is flat), we obtain
		$$
		R^i(M) \otimes_A B \,=\, \HH_{J^i(M)}^0(M) \otimes_A B \,\cong\, \HH_{J^i(M)R_B}^0(M_B).
		$$
		First, we assume that $\phi : A \rightarrow B = W^{-1}B$ is a localization map.
		Then, by basic properties of localizations,  $\Ass_{R_B}(M_B) = \lbrace PR_B \mid P \in  \Ass_R(M) \text{ and } P \not\supset W \rbrace$ and $\codim(PR_B) = \codim(P)$ for any $P \in \Spec(R)$.
		As a consequence, the equality $J^i(M)R_B = J^i(M_B)$ follows, and so we get $R^i(M) \otimes_A B \cong \HH_{J^i(M)R_B}^0(M_B) = \HH_{J^i(M_B)}^0(M_B) = R^i(M_B)$.
		
		Next, we assume that $\phi : A \rightarrow B$ is faithfully flat.
		By \cite[Theorem 23.2]{MATSUMURA} the associated primes of $M_B$ are given by 
		$$
		\Ass_{R_B}(M_B)  \,=\, \bigcup_{P \in \Ass_R(M)} \Ass_{R_B}\left(R_B/PR_B\right).
		$$
		On the other hand, since $R \rightarrow R_B$ is faithfully flat, it follows that $\codim\left(J^i(M)\right)=\codim\left(J^i(M)R_B\right)$ (see \cite[(4.D) Theorem 3, page 28]{MATSUMURA_OLD} and \cite[(13.B) Theorem 19(3), page 79]{MATSUMURA_OLD}).
		This yields the equality $\sqrt{J^i(M)R_B} = \sqrt{J^i(M_B)}$ (see \autoref{rem_prop_Ri}(iii)).
		So, we obtain $R^i(M) \otimes_A B \cong \HH_{J^i(M)R_B}^0(M_B) = \HH_{J^i(M_B)}^0(M_B) = R^i(M_B)$, and the result follows.
\end{proof}

	We now make the connection between the functors $R^i$ and the arithmetic multidegree introduced in \autoref{def_A_multdeg}.

	\begin{defprop}
		Let $\kk$ be a field and $R = \kk[x_1,\ldots,x_n]$ be a positively  $\NN^p$-graded polynomial ring.
		For any $i \ge 0$, we define the \emph{$i$-th truncation multidegree polynomial} by 
		$$
		\mathcal{C}^i(M;\ttt) \,:= \,  \left[\mathcal{C}(R^i(M);\ttt)\right]_i \, = \, \begin{cases}
			\mathcal{C}(R^i(M);\ttt) & \text{ if } \codim(R^i(M)) = i  \\
			0 & \text{ otherwise.}
		\end{cases}
		$$
		Then one has the equality 
		$$
		\mathcal{A}(M; \ttt)  \, = \, \sum_{i \ge 0} \mathcal{C}^i(M; \ttt).
		$$
	\end{defprop}
\begin{proof}
	For any $P \in \Ass_R(M)$ with $\codim(P) = i$, \autoref{rem_prop_Ri}(iii) yields the following equality
	$$
	R^i(M)_P \,=\, \HH_P^0(M_P).
	$$
	Thus by \autoref{rem_prop_Ri}(v) and the associativity formula of multidegrees (see \autoref{rem_multdeg_facts}(iv)), we obtain
	$$
	\left[\mathcal{C}(R^i(M);\ttt)\right]_i \,=\, \sum_{\substack{P \in \Ass_R(M)\\ \codim(P) = i}} \text{length}_{R_P}\left(\HH_P^0(M_P)\right) \mathcal{C}(R/P;\ttt).
	$$
	Hence summing over $i \ge 0$ gives the claimed equality.
\end{proof}

	\subsection{Behavior under flat degenerations}
	
	In this subsection, we study the functor $R^i$ under a flat degeneration.
	The following setup is assumed throughout the present subsection.
	
	\begin{setup}
		\label{setup_flat_degen_Hartshorne}
		Let $A$ be a universally catenary Noetherian domain. 
		Let $R = A[x_1,\ldots,x_n]$ be a positively  $\NN^p$-graded polynomial ring over $A$.
		Let $Q = \Quot(A)$ be the field of fractions of $A$. 
		For any $\pp \in \Spec(A)$, let $\kappa(\pp) := A_\pp/ \pp A_\pp$ be the residue field at $\pp$.
	\end{setup}
	
	Notice that, if $M$ is a finitely generated $\ZZ^p$-graded $R$-module and $\pp \in \Spec(A)$, then $M \otimes_A \kappa(\pp)$ is a finitely generated $\ZZ^p$-graded module over the positively $\NN^p$-graded polynomial ring $R \otimes_A \kappa(\pp) \cong \kappa(\pp)[x_1,\ldots,x_n]$.
	The goal of this subsection is to study the behavior of $\mathcal{A}(M \otimes_A \kappa(\pp); \ttt)$ when varying $\pp \in \Spec(A)$.
	We have the following generic base change property of $R^i$.
	
\begin{proposition}[{\cite[Proposition 2.3]{HARTSHORNE_CONNNECT}}]
	\label{prop_gen_eq_Ri}
	Let $M$ be a finitely generated $\ZZ^p$-graded $R$-module.
	Then there is a dense open subset $\mathcal{U} \subset \Spec(A)$ such that $R^i(M) \otimes_A \kappa(\pp) \cong R^i(M \otimes_A \kappa(\pp))$ for all $i \ge 0$ and $\pp \in \Spec(A)$.
\end{proposition}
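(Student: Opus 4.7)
My plan is to obtain $\mathcal{U}$ by applying Grothendieck's generic flatness simultaneously to finitely many auxiliary modules, and then to verify that on the resulting open set the codimension decomposition used to define $R^i$ is preserved under the fiber base change $A\to\kappa(\pp)$.

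First, I would perform two preliminary reductions. Since $\Ass_R(M)$ is finite, the chain $\{R^i(M)\}_{i\ge 0}$ takes only finitely many distinct values (cf.\ \autoref{rem_prop_Ri}(iv)), so only finitely many indices $i$ need to be handled. I would partition $\Ass_R(M)$ into the \emph{horizontal} primes $P$ (those with $P\cap A=0$) and the \emph{vertical} ones, and invert a single nonzero element of $A$ lying in $P\cap A$ for each vertical $P$; this shrinks $\Spec(A)$ to a principal open on which all associated primes of $M$ are horizontal. On this smaller base, generic flatness applied to $R^i(M)$, $M/R^i(M)$ (for all relevant $i$) and to each $R/P$ with $P\in\Ass_R(M)$ produces a dense open $\mathcal{U}\subset\Spec(A)$ over which all of these modules are $A$-flat, so that on $\mathcal{U}$ every short exact sequence $0\to R^i(M)\to M\to M/R^i(M)\to 0$ stays exact after tensoring with any residue field $\kappa(\pp)$.

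The hardest step will be showing that codimensions of associated primes are preserved under the fiber base change. For a horizontal $P\in\Ass_R(M)$, the domain $R/P$ is flat and finitely generated over $A$, and the dimension formula for flat extensions yields
\[
\dim\bigl((R/P)\otimes_A\kappa(\pp)\bigr) \;=\; \dim(R/P)-\dim(A);
\]
generic equidimensionality of the fibers (arranged by further shrinking $\mathcal{U}$) then ensures every associated prime of this fiber is minimal of the correct dimension. Combining this with $\dim(R)=\dim(A)+n$ and $\codim(P)=\dim(R)-\dim(R/P)$, which uses the universally catenary hypothesis, gives $\codim(\qqq)=\codim(P)$ for every $\qqq\in\Ass\bigl((R/P)\otimes_A\kappa(\pp)\bigr)$ inside $R\otimes_A\kappa(\pp)=\kappa(\pp)[x_1,\ldots,x_n]$. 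This is the delicate point because it brings generic flatness, the flat dimension formula, and catenarity together in a single computation.

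Once codimension preservation is in place, \cite[Theorem 23.2]{MATSUMURA} yields
\[
\Ass\bigl(R^i(M)\otimes_A\kappa(\pp)\bigr) \;\subseteq\; \{\qqq : \codim(\qqq)\ge i\},
\]
\[
\Ass\bigl((M/R^i(M))\otimes_A\kappa(\pp)\bigr) \;\subseteq\; \{\qqq : \codim(\qqq)<i\}.
\]
Then \autoref{rem_prop_Ri}(vii) applied to the first inclusion gives $R^i(M)\otimes_A\kappa(\pp)\subseteq R^i(M\otimes_A\kappa(\pp))$. For the reverse inclusion, any element of $R^i(M\otimes_A\kappa(\pp))$ maps to an element of $(M/R^i(M))\otimes_A\kappa(\pp)$ whose support has codimension $\ge i$ in a module whose associated primes all have codimension $<i$, forcing that image to vanish. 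This furnishes the equality on $\mathcal{U}$, simultaneously for every $i$.
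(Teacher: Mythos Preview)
The paper does not supply its own proof here; the proposition is recorded purely as a citation of Hartshorne's original result. Your outline is a faithful reconstruction of that classical argument: invert an element of $A$ to discard the vertical associated primes, apply generic freeness to the finitely many modules $R^i(M)$, $M/R^i(M)$, and $R/P$, and then use \cite[Theorem~23.2]{MATSUMURA} together with \autoref{rem_prop_Ri}(v)--(vii) to match codimension strata on each fiber.

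One step deserves a sharper justification. The global formulas you invoke, $\dim\bigl((R/P)\otimes_A\kappa(\pp)\bigr)=\dim(R/P)-\dim(A)$ and $\codim(P)=\dim(R)-\dim(R/P)$, can fail for a horizontal prime in $A[x_1,\dots,x_n]$ even when $A$ is universally catenary: for $A$ a DVR with uniformizer $t$ and $P=(tx_1-1)$ one gets $\dim(R/P)=0$ while $\text{ht}(P)=1$ and $\dim(R)=2$. What rescues you is the grading, which you do not explicitly use at this point. Each $P\in\Ass_R(M)$ is homogeneous with $P\cap A=0$, so $R/P$ is positively graded with degree-zero part exactly $A$; the section $\Spec(A)\hookrightarrow\Spec(R/P)$ through the irrelevant ideal, combined with the altitude formula, then forces $\dim(R/P)=\dim(A)+\dim\bigl((R/P)\otimes_A Q\bigr)$, and your identities follow. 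Alternatively, bypass global dimensions entirely: since $P\cap A=0$, height is preserved along $R\to Q[x_1,\dots,x_n]$, giving $\codim_R(P)=n-\dim\bigl((R/P)\otimes_A Q\bigr)$; then generic constancy of fiber dimension together with generic absence of embedded components in the fibers (both standard openness results for morphisms of finite type) yields $\codim(\qqq)=\codim_R(P)$ for every $\qqq\in\Ass\bigl((R/P)\otimes_A\kappa(\pp)\bigr)$. With either fix the remainder of your argument goes through verbatim.
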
	

In the polynomial ring $\ZZ[\ttt] = \ZZ[t_1,\ldots,t_p]$ we consider a coefficient-wise order. 
Let $\ge_c$ be the partial order on $\ZZ[\ttt]$ such that for any two polynomials $f = \sum_\bn f_\bn \ttt^\bn$ and $g = \sum_\bn g_\bn \ttt^\bn$ in $\ZZ[\ttt]$ we have $f \ge_c g$ if and only if $f_\bn \ge g_\bn$ for all $\bn \in \NN^p$.
The next proposition shows that the arithmetic multidegree polynomial cannot decrease under flat degenerations. 

\begin{proposition}
	\label{prop_ineq_A_multdeg}
	Let $M$ be a finitely generated $\ZZ^p$-graded $R$-module, and suppose that $M$ is flat over $A$.
	Then we have the coefficient-wise inequality 
	$$
	\mathcal{A}\left(M \otimes_A Q; \ttt\right) \;\le_c\;  \mathcal{A}\left(M \otimes_A \kappa(\pp); \ttt\right)
	$$
	for all $\pp \in \Spec(A)$.
\end{proposition}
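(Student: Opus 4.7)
The plan is to follow Hartshorne's strategy from \cite[Chapter~2]{HARTSHORNE_CONNNECT} adapted to the multigraded setting, by reducing to a discrete valuation ring base and then exploiting the dimension filtration $R^0(M)\supseteq R^1(M)\supseteq\cdots$ together with the $A$-flatness of $M$. First, \autoref{lem_flat_base_change_Ri} along the localization $A\to A_\pp$ lets me assume $A$ is local with maximal ideal $\pp$. Using that universal catenarity guarantees saturated chains of primes from $(0)$ to $\pp$ in $A$, an induction along such a chain reduces to the case $\dim(A)=1$. A faithfully flat base change to a DVR dominating this one-dimensional local domain (for instance, via $\pp$-adic completion followed by quotient by a minimal prime and normalization, using \autoref{lem_flat_base_change_Ri} to transport the $R^i$ functors along the way) then lets me assume $A$ is a DVR with uniformizer $t$, fraction field $Q$, and residue field $k=\kappa(\pp)$.

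With $A$ a DVR, flatness of $M$ over $A$ is torsion-freeness, and a key observation is that for each $i\ge 0$ both $R^i(M)$ and $M/R^i(M)$ are $A$-torsion-free (hence $A$-flat): if $m\in M$ satisfies $tm\in R^i(M)$, then $\Ann_R(m)=\Ann_R(tm)$ since $t$ is a nonzerodivisor on $M$, so $\codim(\Supp_R(m))\ge i$ and already $m\in R^i(M)$. Hence $R^i(M)\hookrightarrow M$ remains injective after $\otimes_A k$, producing an embedding $R^i(M)\otimes_A k\hookrightarrow M_k$. I would next verify $R^i(M)\otimes_A k\subseteq R^i(M_k)$: by \autoref{rem_prop_Ri}(vii), it suffices to check that every prime in the support of $R^i(M)/tR^i(M)$, viewed as an $R_k$-module with $R_k := R\otimes_A k$, has codimension at least $i$. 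Every such prime lifts to a prime $Q\subset R$ containing $t$ and some minimal prime $P$ of $\Supp_R(R^i(M))$; since $t$ is regular on $R^i(M)$, the inclusion $P\subsetneq Q$ is strict, giving $\codim_R(Q)\ge i+1$ and therefore $\codim_{R_k}(Q)=\codim_R(Q)-1\ge i$ by catenarity of $R$ (inherited from universal catenarity of $A$).

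To conclude, $A$-flatness of $R^i(M)$ implies each graded piece is $A$-free, so the $\mathcal{K}$-polynomial of $R^i(M)$ is constant on fibers; combined with the generic base change \autoref{prop_gen_eq_Ri} (which yields $R^i(M)\otimes_A Q\cong R^i(M_Q)$), this gives $\mathcal{K}(R^i(M_Q);\ttt)=\mathcal{K}(R^i(M)\otimes_A k;\ttt)$. Applying additivity of the $\mathcal{K}$-polynomial on the short exact sequence $0\to R^i(M)\otimes_A k\to R^i(M_k)\to C\to 0$ (see \autoref{rem_multdeg_facts}(iii)) and using that the degree-$i$ part of $\mathcal{K}(C;\mathbf{1}-\ttt)$ has nonnegative coefficients (since $\codim(C)\ge i$), we obtain the termwise inequality $\mathcal{C}^i(M\otimes_A Q;\ttt)\le_c\mathcal{C}^i(M\otimes_A\kappa(\pp);\ttt)$. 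Summing over $i$ yields the desired inequality $\mathcal{A}(M\otimes_A Q;\ttt)\le_c\mathcal{A}(M\otimes_A\kappa(\pp);\ttt)$. The main technical obstacle is arranging the DVR reduction so that flatness of $M$ and the comparison of the two fibers are both preserved; this is where universal catenarity and a careful use of completion and normalization enter.
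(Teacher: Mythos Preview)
Your argument is essentially correct and follows the same overall strategy as the paper: reduce to a DVR base, use the flatness of the dimension filtration pieces $R^i(M)$ and $M/R^i(M)$, and then compare $R^i(M)\otimes_A k$ with $R^i(M_k)$ via \autoref{rem_prop_Ri}(vii) and additivity/positivity of multidegrees. Once you are over a DVR, your argument and the paper's are nearly identical, the only cosmetic difference being that the paper obtains $\codim(R^i(M)\otimes_A k)\ge i$ from the constancy of $\mathcal{C}(R^i(M)\otimes_A -;\ttt)$ over a flat family, whereas you prove it by an explicit prime-lifting and catenarity argument; both are fine.

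The main difference is in the reduction to a DVR. The paper does this in one step: it takes a discrete valuation ring $V$ of $Q=\Quot(A)$ dominating $A_\pp$ (this exists by \cite[Exercise~II.4.11]{HARTSHORNE} or \cite[Proposition~7.1.7]{EGAII}), so that $\Quot(V)=Q$ and $\kappa(\pp)\hookrightarrow\kappa(\mathfrak{n}_V)$; then only a single application of \autoref{lem_flat_base_change_Ri} to the residue-field extension is needed. Your route---localize, induct along a saturated chain, then complete/quotient/normalize---also works, but is more elaborate. One point to correct: the composite map you construct from the one-dimensional local domain to the DVR is \emph{not} faithfully flat in general (quotienting $\widehat{A}$ by a minimal prime is not flat, and normalization need not be flat), so you cannot literally invoke \autoref{lem_flat_base_change_Ri} ``along the way''. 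What makes the reduction go through is rather that (a) flatness of $M$ is preserved under \emph{any} base change $A\to V$, and (b) the generic and special fibers change only by field extensions, to which \autoref{lem_flat_base_change_Ri} (the faithfully flat case) does apply. With that clarification your reduction is valid; the paper's one-line DVR choice is simply a shortcut that avoids the chain induction and the completion step altogether.
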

\begin{proof}
	Fix $\pp \in \Spec(A)$.
	By \cite[Exercise II.4.11]{HARTSHORNE} or \cite[Proposition 7.1.7]{EGAII} there is a discrete valuation ring $V$ of $Q$ that dominates $A_\pp$; that is, $A_\pp \subset V$ and $\pp A_\pp = \mathfrak{n} \cap A_\pp$ where $\mathfrak{n}$ is the closed point of $\Spec(V)$. 
	We have a field extension $\kappa(\pp) \hookrightarrow \kappa(\mathfrak{n})$ and by utilizing \autoref{lem_flat_base_change_Ri}, we obtain that 
	$$
	R^i\left(M_V \otimes_V \kappa(\mathfrak{n})\right)  \, \cong \, R^i\left(M \otimes_A \kappa(\pp)\right) \otimes_{\kappa(\pp)} \kappa(\mathfrak{n}),
	$$
	where $M _V = M \otimes_{A} V$.
	Since multigraded Hilbert functions are preserved under base field extensions, we get the equality of polynomials $\mathcal{A}\left(M \otimes_A \kappa(\pp); \ttt\right) = \mathcal{A}\left(M_V \otimes_V \kappa(\mathfrak{n}); \ttt\right)$.
	Therefore, we can substitute $A$ by $V$, and for the rest of the proof we assume that $A$ is a discrete valuation ring with maximal ideal $\pp$.
	
	Over the discrete valuation ring $A$, a module is $A$-flat if and only if it is $A$-torsion-free.
	Since $M$ is $A$-flat by assumption, we have $P \cap A = 0$ for all $P \in \Ass_R(M)$.
	Then \autoref{rem_prop_Ri}(v)(vi) imply that both $R^i(M)$ and $M/R^i(M)$ are $A$-flat for all $i \ge 0$.
	From \autoref{lem_flat_base_change_Ri}, we get $R^i(M) \otimes_A Q \cong R^i(M \otimes_A Q)$.
	The $A$-flatness of $R^i(M)$ yields the equalities
	$$
	\mathcal{C}\left(R^i(M\otimes_A Q) ; \ttt\right) \, = \, \mathcal{C}\left(R^i(M) \otimes_A Q; \ttt\right) \, = \, \mathcal{C}\left(R^i(M) \otimes_A \kappa(\pp); \ttt\right),
	$$
	and so we obtain $\codim(R^i(M) \otimes_A \kappa(\pp)) \ge i$.
	The $A$-flatness of $M/R^i(M)$ gives the injection
	$$
	R^i(M) \otimes_A \kappa(\pp) \hookrightarrow M \otimes_A \kappa(\pp).
	$$
	Then we obtain the injection $R^i(M) \otimes_A \kappa(\pp) \hookrightarrow R^i(M \otimes_A \kappa(\pp))$ (see \autoref{rem_prop_Ri}(vii)), and so the additivity and positivity of multidegree polynomials give the inequality 
	$$
	\mathcal{C}^i\left(R^i(M) \otimes_A \kappa(\pp); \ttt\right) \, \le_c \, \mathcal{C}^i\left(R^i(M \otimes_A \kappa(\pp)); \ttt\right)
	$$ 
	(see \autoref{rem_multdeg_facts}(iii)(v)).
	By combining everything together we get the inequality 
	$$
	\mathcal{C}^i\left(M\otimes_A Q; \ttt\right) \,=\, \mathcal{C}^i\left(R^i(M\otimes_A Q); \ttt\right) \, \le_c \, \mathcal{C}^i\left(R^i(M \otimes_A \kappa(\pp)); \ttt\right) \,=\, \mathcal{C}^i\left(M\otimes_A \kappa(\pp); \ttt\right).
	$$
	This concludes the proof of the proposition.
\end{proof}
	
	We are now ready to present the main result of this section.
	
	\begin{theorem}
		\label{thm_Amdeg_degen}
		Assume \autoref{setup_flat_degen_Hartshorne}.
		Let $M$ be a finitely generated $\ZZ^p$-graded $R$-module, and suppose that $M$ is flat over $A$.
		Then the function 
		$$
		\Spec(A) \rightarrow \ZZ[\ttt], \quad \pp \mapsto \mathcal{A}\left(M \otimes_A \kappa(\pp); \ttt\right)
		$$
		is upper semicontinuous with respect to the component-wise order $\ge_c$ on $\ZZ[\ttt]$.
	\end{theorem}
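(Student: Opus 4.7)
Set $\phi(\pp) := \mathcal{A}(M \otimes_A \kappa(\pp); \ttt)$. The plan is to combine two ingredients—a pointwise \emph{specialization inequality} $\phi(\pp_0) \le_c \phi(\pp)$ whenever $\pp_0 \subseteq \pp$, and a \emph{generic constancy} statement for $\phi$ on a dense open—and then conclude by Noetherian induction on closed subsets of $\Spec(A)$.

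For the specialization inequality, I would fix $\pp_0 \subseteq \pp$ and pass to $A' = A/\pp_0$, a universally catenary Noetherian domain with $\Quot(A') = \kappa(\pp_0)$, together with $M' = M \otimes_A A'$; by flat base change $M'$ is $A'$-flat. Under the identifications $M' \otimes_{A'} \Quot(A') \cong M \otimes_A \kappa(\pp_0)$ and $M' \otimes_{A'} \kappa(\pp/\pp_0) \cong M \otimes_A \kappa(\pp)$, Proposition \ref{prop_ineq_A_multdeg} applied to $(A', M')$ delivers the inequality. For the generic constancy, only finitely many $R^i(M)$ are nonzero; applying graded generic flatness to each and intersecting with the open produced by Proposition \ref{prop_gen_eq_Ri} yields a dense open $\mathcal{U} \subset \Spec(A)$ on which, for every $i$, $R^i(M)$ is $A$-flat and $R^i(M) \otimes_A \kappa(\pp) \cong R^i(M \otimes_A \kappa(\pp))$. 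Since $\mathcal{U}$ is connected (being a nonempty open subset of the irreducible space $\Spec(A)$) and every graded piece $[R^i(M)]_\nu$ is finitely generated and $A$-flat on $\mathcal{U}$, the multigraded Hilbert series and the codimension of the fiber are both constant on $\mathcal{U}$; thus each $\mathcal{C}^i(M \otimes_A \kappa(\pp); \ttt)$, and hence $\phi$, is constant on $\mathcal{U}$, with common value $\phi(\eta) = \mathcal{A}(M \otimes_A Q; \ttt)$.

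To conclude, I would prove by Noetherian induction on closed subsets $Z \subseteq \Spec(A)$ that $\phi|_Z$ is upper semicontinuous with respect to $\ge_c$. The reducible case is immediate by induction on irreducible components. For $Z = V(\pp_0)$ irreducible, the two ingredients above apply verbatim to $(A/\pp_0, M \otimes_A A/\pp_0)$: given $c \in \ZZ[\ttt]$, if $\phi(\pp_0) \ge_c c$ the specialization inequality forces $\{\pp \in Z : \phi(\pp) \ge_c c\} = Z$, which is closed; otherwise this set misses the dense open of $Z$ on which $\phi \equiv \phi(\pp_0)$ and is therefore contained in a proper closed subset of $Z$, on which upper semicontinuity is already known by induction. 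The main obstacle is the generic constancy step, since Proposition \ref{prop_gen_eq_Ri} alone does not pin down how $\codim(R^i(M \otimes_A \kappa(\pp)))$ varies with $\pp$; the resolution is to shrink $\mathcal{U}$ using generic flatness for the finitely many nonzero $R^i(M)$'s, after which flatness automatically forces the multigraded Hilbert series, the codimension, and hence the whole contribution $\mathcal{C}^i$ to be constant on $\mathcal{U}$.
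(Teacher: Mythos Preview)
Your proof is correct and follows essentially the same approach as the paper: both combine the specialization inequality from \autoref{prop_ineq_A_multdeg} with the generic constancy coming from \autoref{prop_gen_eq_Ri} and Grothendieck's generic freeness. The only cosmetic difference is that you run a direct Noetherian induction on closed subsets, whereas the paper packages the same two ingredients via the topological Nagata criterion for openness (whose proof is precisely such an induction).
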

\begin{proof}
		It suffices to show that 
		$$
		E_h \,:=\, \left\lbrace \pp \in \Spec(A) \mid  \mathcal{A}\left(M \otimes_A \kappa(\pp); \ttt\right) \le_c h \right\rbrace
		$$
		is an open subset for an arbitrary $h \in \ZZ[\ttt]$.
		Let $h \in \ZZ[\ttt]$.
		The topological Nagata criterion for openness (see, e.g., \cite[Theorem 24.2]{MATSUMURA}) says that $E_h$ is open if and only if the following two conditions are satisfied:
		\begin{enumerate}[\rm (i)]
			\item If $\pp, \qqq \in \Spec(A)$, $\pp \in E_h$ and $\pp \supseteq \qqq$, then $\qqq \in E_h$.
			\item If $\pp \in E_h$, then $E_h$ contains a non-empty open subset of $V(\pp) \subset \Spec(A)$. 
		\end{enumerate}
		Condition~(i) follows from \autoref{prop_ineq_A_multdeg}.
		Condition~(ii) is obtained by utilizing \autoref{prop_gen_eq_Ri} and Grothendieck's generic freeness lemma (see, e.g., \cite[Theorem 24.1]{MATSUMURA}).
		This establishes the result of the theorem.
\end{proof}

\subsection{Behavior under Gr\"obner degenerations}	

	For the sake of completeness, we restate our results in terms of Gr\"obner degenerations.
	
	\begin{theorem}
		Let $\kk$ be a field, $R = \kk[x_1,\ldots,x_n]$ be a positively $\NN^p$-graded polynomial ring and $>$ be a monomial order {\rm(}or a weight order{\rm)} on $R$.
		For any $R$-homogeneous ideal $I \subset R$, we have that 
		$$
		\mathcal{A}\left(R/I; \ttt\right) \,\le_c\, \mathcal{A}\left(R/\init_>(I); \ttt\right).
		$$
	\end{theorem}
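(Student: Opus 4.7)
The plan is to realize the theorem as a special case of \autoref{prop_ineq_A_multdeg} by setting up the classical Gr\"obner degeneration as a $\kk[t]$-flat family, and then to translate the resulting generic-fiber estimate back to $R/I$ via base-field invariance of $\mathcal{A}$.

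First, I would fix an integer weight vector $w \in \NN^n$ with $\init_w(I) = \init_>(I)$ (for weight orders this is given; for monomial orders it exists by standard Gr\"obner theory, see, e.g., \cite[Chapter 15]{EISEN_COMM}). View $R[t] = \kk[t][x_1,\ldots,x_n]$ as a positively $\NN^p$-graded polynomial ring over $A := \kk[t]$ with $\deg(t) := \mathbf{0}$ and $\deg(x_i)$ unchanged. Taking an $\NN^p$-homogeneous Gr\"obner basis $g_1,\ldots,g_k$ of $I$ with respect to $w$, I form the $w$-homogenizations
$$
\tilde{g}_j(x,t) \;:=\; t^{\deg_w(g_j)} \, g_j\big(t^{-w_1}x_1,\ldots,t^{-w_n}x_n\big) \;\in\; R[t],
$$
and set $\widetilde{I} := (\tilde{g}_1,\ldots,\tilde{g}_k)$. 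Because $t$ lies in degree $\mathbf{0}$, the substitution $x_i \mapsto t^{-w_i}x_i$ preserves $\NN^p$-degrees, so each $\tilde{g}_j$ is $\NN^p$-homogeneous of the same multidegree as $g_j$ and $\widetilde{I}$ is $R[t]$-homogeneous. Standard Gr\"obner degeneration then yields that $M := R[t]/\widetilde{I}$ is $\kk[t]$-flat, with special fiber $M/tM \cong R/\init_>(I)$ at the closed point $(t)$; moreover, writing $R' := \kk(t)[x_1,\ldots,x_n]$, the $\NN^p$-graded $\kk(t)$-algebra automorphism $\phi : R' \to R'$, $x_i \mapsto t^{-w_i}x_i$ (well-defined since $t$ is a unit in $\kk(t)$) identifies the generic fiber $M \otimes_{\kk[t]} \kk(t)$ with $R'/IR'$.

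Next, I would apply \autoref{prop_ineq_A_multdeg} with $A = \kk[t]$, the module $M$, and $\pp = (t)$, obtaining
$$
\mathcal{A}(R'/IR';\,\ttt) \;\le_c\; \mathcal{A}(R/\init_>(I);\,\ttt).
$$
The remaining step—and this is the one I expect to be the main obstacle—is the base-field invariance $\mathcal{A}(R/I;\ttt) = \mathcal{A}(R'/IR';\ttt)$. I would route this through the functors $R^i$ to avoid the finicky bookkeeping of associated primes and embedded components under field extension. Since $\kk \hookrightarrow \kk(t)$ is faithfully flat, \autoref{lem_flat_base_change_Ri} gives $R^i(R'/IR') \cong R^i(R/I) \otimes_\kk \kk(t)$ for every $i \ge 0$. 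The $\mathcal{K}$-polynomial of a finitely generated $\ZZ^p$-graded $R$-module $N$ is preserved under $\kk \hookrightarrow \kk(t)$—a $\ZZ^p$-graded minimal free $R$-resolution of $N$ tensors to a $\ZZ^p$-graded free $R'$-resolution of $N \otimes_\kk \kk(t)$ with identical Betti numbers—so by \autoref{def_multdeg_pol} the multidegree polynomial $\mathcal{C}$ and the codimension are also preserved. Applied to each $R^i(R/I)$, this yields $\mathcal{C}^i(R/I;\ttt) = \mathcal{C}^i(R'/IR';\ttt)$ for all $i \ge 0$, and summing over $i$ closes the argument.
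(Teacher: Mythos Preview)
Your proof is correct and follows essentially the same route as the paper: both invoke the standard Gr\"obner degeneration over $\kk[t]$ (the paper cites \cite[Theorem 15.17]{EISEN_COMM}) and then apply \autoref{prop_ineq_A_multdeg}. You have simply spelled out the base-field invariance $\mathcal{A}(R/I;\ttt)=\mathcal{A}(R'/IR';\ttt)$ via \autoref{lem_flat_base_change_Ri}, a step the paper leaves implicit in its one-line proof.
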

\begin{proof}
	It is a direct consequence of \cite[Theorem 15.17]{EISEN_COMM} and \autoref{prop_ineq_A_multdeg}.
\end{proof}

	This result portrays a different behavior of the arithmetic multidegree, since we always have the equalities $\mathcal{C}\left(R/I; \ttt\right) = \mathcal{C}\left(R/\init_>(I); \ttt\right)$ and $\mathcal{G}\left(R/I; \ttt\right) = \mathcal{G}\left(R/\init_>(I); \ttt\right)$.
	These two equalities follow from the fact that these two notions of multidegree are determined by the Hilbert series (see \autoref{def_multdeg_pol} and \autoref{def_G_multdeg}).

\section{Multidegrees of projections}
	\label{sect_projections}
	
	In this section, we study the multidegrees of all the possible natural projections of an integral multiprojective scheme $X \subset \PP_\kk^{m_1} \times_\kk \cdots \times_\kk \PP_\kk^{m_p}$. 
	We show that the maximal multidegree of a projection is always smaller or equal than the maximal multidegree of $X$.
	Even more, we prove that any multidegree of a projection must divide some multidegree of $X$. 
	The setup below is used throughout this section.

	\begin{setup}
		\label{setup_projections}
		Let $\kk$ be a field and $S = \kk[x_1,\ldots,x_n]$ be a standard $\NN^p$-graded polynomial ring. 
		Suppose that $\multProj(S) = \PP = \PP_\kk^{m_1} \times_\kk \cdots \times_\kk \PP_\kk^{m_p}$.
		Let $X = \multProj(R) \subset \PP$ be an integral closed subscheme and $R$ be an $\NN^p$-graded quotient ring of $S$ that is a domain.
	\end{setup}
	
	For a closed subscheme $Z  \subset  \PP' = \PP_\kk^{r_1} \times_\kk \cdots \times_\kk \PP_\kk^{r_k}$, we consider the following number 
	$$
	\text{MDeg}_{\PP'}(Z) \;:=\; \max\lbrace \deg_{\PP'}^\bn(Z) \mid \bn \in \NN^k \text{ with } |\bn| = \dim(Z)\rbrace. 
	$$
	
	The main result of this section is the theorem below. 
	
	\begin{theorem}
		\label{thm_mdeg_proj}
		Assume \autoref{setup_projections}.
		Let $\fJ = \{j_1,\ldots,j_k\} \subset [p]$ and set $\PP' = \PP_\kk^{m_{j_1}} \times_\kk \cdots \times_\kk \PP_\kk^{m_{j_k}}$.
		Then we have the inequality 
		$$
		{\rm MDeg}_{\PP'}\left(\Pi_\fJ(X)\right) \;\le\; {\rm MDeg}_\PP(X).
		$$
		Moreover, for any $\dd \in \NN^k$ with $|\dd| = \dim\left(\Pi_\fJ(X)\right)$ and such that $\deg_{\PP'}^\dd\left(\Pi_\fJ(X)\right) > 0$, there exists some $\bn \in \NN^p$ with $|\bn| = \dim(X)$ and such that $\deg_\PP^\bn(X) > 0$ and $\deg_{\PP'}^\dd\left(\Pi_\fJ(X)\right)$ divides $\deg_\PP^\bn(X)$.
	\end{theorem}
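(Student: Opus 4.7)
The plan is to factor $\Pi_\fJ$ as a composition of projections that each drop a single factor, and then analyze each such step via the generic fiber of the induced morphism $X \to \Pi_\fJ(X)$. Since divisibility of positive integers is transitive, the overall factor obtained by multiplying the factors at each step is still a positive integer, so it suffices to prove the divisibility statement in the case $\fJ = [p] \setminus \{j\}$ for a single index $j$; for notational convenience I take $j = p$. The inequality $\mathrm{MDeg}_{\PP'}\bigl(\Pi_\fJ(X)\bigr) \le \mathrm{MDeg}_\PP(X)$ follows formally from the divisibility claim. Finally, since multigraded Hilbert series, and therefore multidegrees via \autoref{thm_two_mdegs}, are preserved under base field extension, I may assume $\kk$ is algebraically closed, which legitimizes generic choices.

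Let $\pi\colon X \to Y := \Pi_\fJ(X)$ be the induced morphism of integral schemes and set $r := \dim X - \dim Y$, so $0 \le r \le m_p$ since the fibers of $\pi$ lie in $\PP_\kk^{m_p}$. The generic fiber $X_\eta := X \times_Y \Spec(K(Y))$ is an integral closed subscheme of $\PP_{K(Y)}^{m_p}$ of dimension $r$, and carries a well-defined projective degree $\delta \ge 1$. Fix $\dd \in \NN^{p-1}$ with $|\dd| = \dim Y$ and $\deg_{\PP'}^\dd(Y) > 0$, and set $\bn := (d_1,\ldots,d_{p-1},r) \in \NN^p$, so $|\bn| = \dim X$. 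The heart of the argument will be to establish the identity
$$
\deg_\PP^{\bn}(X) \;=\; \delta \cdot \deg_{\PP'}^\dd(Y),
$$
from which both the positivity $\deg_\PP^\bn(X) > 0$ and the divisibility $\deg_{\PP'}^\dd(Y) \mid \deg_\PP^\bn(X)$ are immediate. To prove the identity, I will interpret $\deg_\PP^\bn(X)$ geometrically as the length of $X \cap (L' \times L_p)$ for generic linear subspaces $L' = \prod_{i \in \fJ} L'_i$ with $\dim L'_i = m_i - d_i$ and $L_p \subset \PP_\kk^{m_p}$ with $\dim L_p = m_p - r$; by generic flatness, the points of $Y \cap L'$ lie in a dense open $U \subset Y$ over which $\pi$ is flat with fibers sharing the Hilbert polynomial of $X_\eta$, and over each such point the fiber contributes $\delta$ intersection points with a generic $L_p$. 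Summing over the $\deg_{\PP'}^\dd(Y)$ points of $Y \cap L'$ yields the displayed formula.

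The main obstacle will be the rigorous justification of this projection/associativity formula for multidegrees. It requires combining generic flatness (so that the fiber degree is constant on a dense open of $Y$), Bertini-type genericity (so that $Y \cap L'$ consists of $\deg_{\PP'}^\dd(Y)$ reduced points all lying inside the flat locus), and a careful treatment of the non-geometrically-integral situation that may occur in positive characteristic, where the quantity $\delta$ must be interpreted as an appropriate intersection multiplicity to remain a well-defined positive integer. An alternative, purely algebraic, implementation would replace the Bertini/flatness step by an inductive use of \autoref{rem_multdeg_facts}(ii)(b): cutting $R$ by a generic linear form of degree $\ee_p$ multiplies the multidegree polynomial by $t_p$, reducing $r$ by one and, by a suitable induction hypothesis, eventually producing the required factor $\delta$.
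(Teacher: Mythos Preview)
Your overall strategy is sound and in fact sharpens the paper's conclusion: the divisor you name, the degree $\delta$ of the generic fiber $X_\eta \subset \PP^{m_p}_{K(Y)}$, is exactly the factor the paper's argument produces once it is unwound. There is, however, one genuine gap. Passing to $\overline{\kk}$ need not preserve integrality of $X$ (a domain $R$ over $\kk$ can acquire zero-divisors after tensoring with $\overline{\kk}$), so after that base change you may be facing a reducible scheme and your generic-fiber argument no longer applies as written. The paper sidesteps this by never going to $\overline{\kk}$: it adjoins transcendentals $\LL=\kk(z_0,\dots,z_\ell)$ (see \autoref{nota_generic}), which keeps $R_\LL$ a domain (\autoref{rem_infinite_field}), and replaces your ``generic $L_p$'' by the single generic form $y=\sum_j z_jy_j$, whose zero locus on $X_\LL$ remains integral by \autoref{rem_gen_Bertini}.

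The paper's route is essentially your ``alternative, purely algebraic'' implementation. It inducts on $s=\dim X-\dim Y$. The base case $s=0$ is precisely your formula with $r=0$ and $\delta=[K(X):K(Y)]$, established in \autoref{lem_proj_same_dim} via either a Hilbert-polynomial computation or the projection formula $f_*[X]=\deg(\Phi)\,[Y]$. For $s>0$ one cuts $X_\LL$ by $y$ to obtain an integral $X'$ with $\dim X'=\dim X-1$; the key input is \autoref{thm_dim_proj}, which guarantees both that $\Pi_{[p-1]}(X')=Y_\LL$ (so the image is unchanged) and that $\deg_{\PP_\LL}^{\bn-\ee_p}(X')=\deg_\PP^\bn(X)$ whenever $n_p\ge 1$. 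Hence the divisibility claim for $X'$ (now with parameter $s-1$) transfers back to $X$. Your direct Bertini/flatness picture is the same computation in spirit, but making it rigorous over an arbitrary field essentially forces this one-cut-at-a-time induction, so the two approaches converge. What your formulation buys is the explicit identification of the quotient as $\delta$; what the paper's buys is a clean argument valid over any $\kk$ without appeal to algebraic closure.
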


	Before providing the proof of the theorem we need some preparatory results.
	
	\begin{lemma}
		\label{lem_proj_same_dim}
		Consider the natural projection $\Pi_{[p-1]}:\PP \rightarrow \PP' = \PP_\kk^{m_1} \times_\kk \cdots \PP_\kk^{m_{p-1}}$ and let $Y = \Pi_{[p-1]}(X)$ be the corresponding image.
		Let $\Phi : X \rightarrow Y$ be the restriction of $\Pi_{[p-1]}$ to $X$ and $Y$.
		If $\dim(X) = \dim(Y)$, then
		$$
		\deg_\PP^{\bn,0}(X) \; = \; \deg(\Phi) \deg_{\PP'}^\bn(Y)
		$$
		for all $\bn \in \NN^{p-1}$ with $|\bn| = \dim(X)$;
		here $\deg(\Phi) = \left[K(X) : K(Y)\right]$ denotes the degree of $\Phi$.
	\end{lemma}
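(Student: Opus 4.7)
The plan is to compute both sides geometrically as lengths of $0$-dimensional linear sections and then to compare them via a generic-flatness argument for the fibers of $\Phi$. Since multidegrees and the field-extension degree $[K(X):K(Y)]$ are preserved under base change, I would first extend scalars to the algebraic closure of $\kk$ so as to work over an infinite algebraically closed field. I would then fix $\bn \in \NN^{p-1}$ with $|\bn| = \dim X = \dim Y$ and pick a sufficiently general sequence of linear forms in $S_{([p-1])}$ consisting of $n_i$ forms of degree $\ee_i$ for each $i \le p-1$. These cut out a linear subvariety $L = L_1 \times_\kk \cdots \times_\kk L_{p-1} \subset \PP'$ with each $L_i \subseteq \PP_\kk^{m_i}$ of codimension $n_i$; note that $\Pi_{[p-1]}^{-1}(L) = L \times_\kk \PP_\kk^{m_p}$.

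Next, by the classical interpretation of mixed multiplicities via generic linear sections (as developed in \cite{cidruiz2021mixed}), for sufficiently general choices the intersections $Y \cap L$ and $X \cap \Pi_{[p-1]}^{-1}(L)$ are $0$-dimensional subschemes whose $\kk$-lengths equal $\deg_{\PP'}^{\bn}(Y)$ and $\deg_{\PP}^{\bn,0}(X)$, respectively. From $\Pi_{[p-1]}^{-1}(L) = L \times_\kk \PP_\kk^{m_p}$ one obtains the scheme-theoretic identity $X \cap \Pi_{[p-1]}^{-1}(L) = \Phi^{-1}(Y \cap L)$. Since $\Phi$ is a generically finite morphism between integral varieties of degree $d := [K(X):K(Y)]$, there is a dense open $U \subseteq Y$ over which $\Phi$ is finite and flat of constant degree $d$; because $Y \cap L$ is finite and $Y \setminus U$ is a proper closed subvariety, genericity of $L$ lets me arrange $Y \cap L \subset U$. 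Then $\Phi^{-1}(Y \cap L) \to Y \cap L$ is a finite flat morphism of degree $d$, giving $\text{length}_\kk(\Phi^{-1}(Y \cap L)) = d \cdot \text{length}_\kk(Y \cap L)$, and hence the desired equality $\deg_{\PP}^{\bn,0}(X) = \deg(\Phi) \cdot \deg_{\PP'}^{\bn}(Y)$.

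The hard part will be justifying the two genericity assertions simultaneously: that the chosen general linear forms cut out $0$-dimensional schemes whose $\kk$-lengths compute the prescribed multidegrees, and that the resulting finite set $Y \cap L$ lies inside the flat locus $U$ of $\Phi$. Both are standard multigraded Bertini-type statements resting on the infinitude of $\kk$ together with the fact that the bad loci are proper closed subsets of the relevant parameter spaces, but producing a single generic $L$ that satisfies both conditions at once requires some care in the ordering of the genericity assumptions.
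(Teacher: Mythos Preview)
Your approach is sound and constitutes a genuine third proof, different from both arguments the paper gives. The paper's first proof is purely algebraic: it fixes a large $\omega$, considers the $A$-module $M=\bigoplus_\nu [R]_{\nu,\omega}$ over $A=R_{([p-1])}$, reads off $e(\bn;M)=\deg_\PP^{\bn,0}(X)$ from the Hilbert polynomial, and then invokes the associativity formula together with the identification $\rank_A(M)=\deg(\Phi)$ (obtained via the generic fiber of $\Proj(R)\to\Spec(A)$). The paper's second proof is the one-line intersection-theoretic argument using the projection formula for Chern classes and $f_*[X]=\deg(\Phi)[Y]$ from Fulton. Your argument instead realizes both multidegrees as lengths of general linear sections and compares them through finite-flatness of $\Phi$ over a dense open of $Y$; this is more hands-on and avoids both the Hilbert-polynomial bookkeeping and the intersection-theory machinery, at the cost of having to manage several genericity conditions simultaneously.

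One point deserves care: extending scalars to the algebraic closure can destroy integrality of $X$ (and of $Y$) when $X$ is not geometrically integral, in which case the phrase ``$[K(X_{\overline\kk}):K(Y_{\overline\kk})]$'' loses its meaning and the finite-flat-degree argument becomes awkward. You do not actually need algebraic closedness anywhere---only that $\kk$ be infinite, so that the Bertini/moving statements go through. Replace the base change to $\overline\kk$ by a purely transcendental extension (as in \autoref{rem_infinite_field}); this keeps $R$ a domain and the rest of your outline then works as written. With that adjustment, the simultaneous genericity you flag is indeed routine: the bad locus $Y\setminus U$ has dimension $<|\bn|$, so a general $L$ of codimension $|\bn|$ in $\PP'$ avoids it, and intersecting finitely many dense opens in the parameter space of $L$ handles the remaining conditions.
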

	\begin{proof}[First proof.]	
		Let $r = \dim(X)$.
		We write the Hilbert polynomial of $X$ as follows 
		$$
		P_X(t_1,\ldots,t_p) \;=\; \sum_{\substack{\bn \in \NN^p \\ |\bn| \le r}} e(\bn) \binom{t_1+n_1}{n_1}\cdots \binom{t_p+n_p}{n_p}.
		$$
		Choose a positive integer $\omega$ such that $P_X(\nu) = \dim_\kk\left(\left[R\right]_\nu\right)$ for all $\nu \in \NN^p$ satisfying $\nu \ge \omega \cdot \mathbf{1} \in \NN^p$.
		Let $A \subset R$ be the standard $\NN^{p-1}$-graded $\kk$-algebra given by 
		$$
		A \,=\, R_{([p-1])} \,=\, \bigoplus_{\nu_1,\ldots,\nu_{p-1}\ge 0} \left[R\right]_{\nu_1,\ldots,\nu_{p-1},0}.
		$$
		Consider the finitely generated $\NN^{p-1}$-graded $A$-module $M =  \bigoplus_{\nu_1,\ldots,\nu_{p-1}\ge 0} \left[R\right]_{\nu_1,\ldots,\nu_{p-1},\omega}$.
		By construction the Hilbert polynomial of $M$ is given by
		$$
		P_M(\ttt) \,=\, P_X(\ttt,\omega) \,=\, \sum_{\substack{\bn \in \NN^{p-1},\, m \in \NN \\ |\bn| + m \le r}} e(\bn,m) \binom{t_1+n_1}{n_1}\cdots \binom{t_{p-1}+n_{p-1}}{n_{p-1}} \binom{\omega+m}{m}.
		$$
		Regrouping the terms yields the following expression 
		$$
		P_M(\ttt) \,=\, \sum_{\substack{\bn \in \NN^{p-1} \\ |\bn|  \le r}} \left(\sum_{m=0}^{r-|\bn|}e(\bn,m) \binom{\omega+m}{m}\right) \cdot  \binom{t_1+n_1}{n_1}\cdots \binom{t_{p-1}+n_{p-1}}{n_{p-1}}.
		$$
		This forces the equality $e(\bn; M) = e(\bn,0) = \deg_\PP^{\bn,0}(X)$ for all $\bn \in \NN^{p-1}$ with $|\bn|=r$.
		Since $M$ is a torsion-free module over the domain $A$, the associativity formula for mixed multiplicities gives the equalities $\deg_\PP^{\bn,0}(X) = e(\bn; M) = \rank_A(M) e(\bn; A) = \rank_A(M) \deg_{\PP'}^\bn(Y)$ for all $\bn \in \NN^{p-1}$ with $|\bn|=r$.
		As $\dim(X) = \dim(Y)$, the morphism $\Phi$ is generically finite.
		Let $\phi : \Proj(R) \rightarrow \Spec(A)$ be the natural morphism obtained by considering $R$ as an $\NN$-graded ring after setting $[R]_\mu = \bigoplus_{\nu_1,\ldots,\nu_{p-1}\ge 0} [R]_{\nu_1,\ldots,\nu_{p-1},\mu}$ for all $\mu \in \NN$.
		Notice that $\phi$ is a generically finite since $\Phi$ is, and that $\deg(\Phi) = \deg(\phi)$.
		Let $\eta$ the be the generic point of $\Spec(A)$ and $Q = \Quot(A)$ be the field of fractions of $A$.
		Then the generic fiber $Z = \phi^{-1}(\eta) = \Proj(R \otimes_A Q)$ is zero-dimensional (and thus affine, see, e.g., \cite[Proposition 5.20]{GORTZ_WEDHORN}) and we have the equality 
		$$
		\deg(\phi) \,= \, \dim_Q(\phi^{-1}(\eta)) \,=\, \dim_Q\left(\HH^0(Z, \OO_Z(\mu))\right) 
		$$
		for all $\mu \in \ZZ$. 
		In particular, this implies that $\deg(\phi) = \dim_Q\left(\HH^0(Z, \OO_Z(\omega))\right) = \dim_Q\left(\left[R \otimes_A Q\right]_\omega\right) = \rank_A(M)$, after possibly increasing $\omega$ and making it large enough.
		By combining everything, we obtain the claimed equality $
		\deg_\PP^{\bn,0}(X) \; = \; \deg(\Phi) \deg_{\PP'}^\bn(Y)
		$
		for all $\bn \in \NN^{p-1}$ with $|\bn| = r$.
	\end{proof}
\begin{proof}[Second proof.]
	For each $1 \le i \le p-1$, let $L_i = \OO_{\PP'}\left(\ee_i'\right)$ be the line bundle on $\PP' = \PP_\kk^{m_1}\times_\kk\cdots\times_\kk\PP_\kk^{m_{p-1}} $ corresponding with the $i$-th elementary vector $\ee_i' \in \NN^{p-1}$.
	To simplify notation, set $f = \Pi_{[p-1]}$ and let $r = \dim(X)$.
	From the projection formula (see \cite[Proposition 2.5]{FULTON_INTER}) and the fact that $f_*[X] = \deg(\Phi) [Y]$ (see \cite[\S 1.4]{FULTON_INTER}),  we obtain the equalities
	\begin{align*}
		\deg_\PP^{\bn,0}(X) &= \int_{\PP} c_1(f^*(L_1))^{n_1}\cdots c_1(f^*(L_{p-1}))^{n_{p-1}} \cap [X]\\
		&= \deg(\Phi)\int_{\PP'} c_1(L_1)^{n_1}\cdots c_1(L_{p-1})^{n_{p-1}} \cap [Y] \\
		&= \deg(\Phi) \deg_{\PP'}^\bn(Y)
	\end{align*}
	for all $\bn = (n_1,\ldots,n_{p-1}) \in \NN^{p-1}$ with $|\bn| = r$. 
	So, the result follows.
\end{proof}
	
	We shall use a standard type of generic Bertini theorem, much in the spirit of \cite[\S 1.5]{JOINS_INTERS}, that is quite flexible and useful for our purposes.
	To that end, we introduce the following notation.

	\begin{notation}
		\label{nota_generic}
		Let $\{y_0,\ldots,y_\ell\}$ be a basis of the $\kk$-vector space $[S]_{\ee_p}$.
		Consider a purely transcendental field extension $\LL := \kk(z_0,\ldots,z_\ell)$ of $\kk$, and set $S_\LL := S \otimes_\kk \LL$, $R_\LL := R \otimes_\kk \LL$ and $X_\LL := X \otimes_\kk \LL = \multProj\left(R_\LL\right) \subset  \PP_\LL :=   \PP \otimes_\kk \LL = \PP_\LL^{m_1} \times_\LL \cdots \times_\LL \PP_\LL^{m_p}$.
		We say that $y := z_0y_0 + \cdots + z_\ell y_\ell \in {\left[S_\LL\right]}_{\ee_p}$ is the \emph{generic element} of ${\left[S_\LL\right]}_{\ee_p}$.
	\end{notation}

	\begin{remark}
		\label{rem_infinite_field}
		If $\kk(\xi)$ is a purely transcendental field extension over $\kk$,  then $R \otimes_\kk \kk(\xi)$ is also a domain (see, e.g., \cite[Remark 3.9]{POSITIVITY}).
	\end{remark}

	\begin{remark}[{\cite[Corollary 1.5.9, Corollary 1.5.10]{JOINS_INTERS}}]
		\label{rem_gen_Bertini}
		Assume $\dim(X) \ge 1$. Then $X_\LL \cap V(y)$ is also an integral scheme.
	\end{remark}
	\begin{proof}
		We may restrict to an affine open subscheme $U = \Spec(A)$ with $A$ a domain, and assume that the restriction to $A$ of one of the $y_i$'s, say $y_0$, becomes a unit.
		Notice that $U_\LL \cap V(y) = \Spec\left((A \otimes_\kk \LL)/(y)\right)$ and that $(A \otimes_\kk \LL)/(y)$ is a localization of $A[z_0,z_1\ldots,z_\ell]/(z_0 + \frac{y_1}{y_0} z_1 + \cdots + \frac{y_\ell}{y_0} z_\ell)$.
		From the fact that $(z_0 + \frac{y_1}{y_0} z_1 + \cdots + \frac{y_\ell}{y_0} z_\ell)$ is a prime ideal, it follows that $(A \otimes_\kk \LL)/(y)$ is a domain. 
		This concludes the proof.
	\end{proof}

	We recall the following result from \cite{POSITIVITY} that will be essential.
	
	\begin{theorem}
		\label{thm_dim_proj}
		Assume \autoref{setup_projections} and \autoref{nota_generic}, and suppose that $\dim(\Pi_{p}(X)) \ge 1$.
		Then the following statements hold: 
		\begin{enumerate}[\rm (i)]
			\item 
			$
			\dim\left(\Pi_{\fJ}(X_\LL \cap V(y))\right) \;=\; \min\big\{ \dim(\Pi_{\fJ}(X)), \dim(\Pi_{\fJ \cup \{p\}}(X))-1 \big\}
			$
			for all $\fJ \subseteq [p]$.
			\item $\deg_{\PP_\LL}^{\bn - \ee_p}\left(X_\LL \cap V(y)\right) = \deg_{\PP_\LL}^\bn(X_\LL)$ for all $\bn = (n_1,\ldots,n_p) \in \NN^p$ with $|\bn| = \dim(X)$ and $n_p \ge 1$.
		\end{enumerate}
	\end{theorem}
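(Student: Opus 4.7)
The plan is to prove the two parts separately: part (ii) will follow from a purely algebraic identity on multidegree polynomials, while part (i) will require a case analysis based on whether $p \in \fJ$, combined with generic hyperplane section arguments.

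For part (ii), I would first note that $R_\LL$ is a domain by \autoref{rem_infinite_field}, so the nonzero homogeneous element $y \in [S_\LL]_{\ee_p}$ is a non-zero-divisor on $R_\LL$. By \autoref{rem_multdeg_facts}(ii)(b), we then obtain the algebraic identity
$$
\mathcal{C}(R_\LL/yR_\LL; \ttt) \;=\; t_p \cdot \mathcal{C}(R_\LL; \ttt).
$$
To translate this into the desired equality of geometric multidegrees via \autoref{thm_two_mdegs}, I would verify that the irrelevant-saturation of $R_\LL/yR_\LL$ behaves correctly; this is where the hypothesis $\dim(\Pi_p(X)) \ge 1$ is used, as it guarantees that $X_\LL \cap V(y)$ genuinely drops dimension by one in the multiprojective sense. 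Comparing the coefficient of $t_1^{m_1 - n_1}\cdots t_p^{m_p - n_p}$ on the left-hand side with that of $t_1^{m_1 - n_1}\cdots t_p^{m_p - (n_p - 1)}$ on the right-hand side then yields $\deg_{\PP_\LL}^{\bn - \ee_p}(X_\LL \cap V(y)) = \deg_{\PP_\LL}^{\bn}(X_\LL)$ for all $\bn$ with $|\bn| = \dim(X)$ and $n_p \ge 1$.

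For part (i), I would split on whether $p \in \fJ$. If $p \in \fJ$, then $\fJ \cup \{p\} = \fJ$ and the right-hand side reduces to $\dim(\Pi_\fJ(X)) - 1$; this is a classical generic hyperplane section result, since $\Pi_\fJ$ retains the $p$-th factor and $y$ pulls back to a generic linear form that properly cuts $\Pi_\fJ(X)$ by virtue of $\dim(\Pi_p(X)) \ge 1$, with \autoref{rem_gen_Bertini} applied to $\Pi_\fJ(X)$ ensuring that the resulting intersection is integral and well-behaved. If $p \notin \fJ$, I would factor $\Pi_\fJ = \pi \circ \Pi_{\fJ \cup \{p\}}$ with $\pi$ forgetting the $p$-th coordinate, and analyze the natural surjection $f : \Pi_{\fJ \cup \{p\}}(X) \to \Pi_\fJ(X)$ through its generic fiber. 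When the generic fiber of $f$ is positive-dimensional, a generic hyperplane $V(y)$ in the $p$-th factor meets each such fiber properly, so $\Pi_\fJ(X_\LL \cap V(y))$ still surjects onto $\Pi_\fJ(X)_\LL$ and has dimension $\dim(\Pi_\fJ(X))$; when the generic fiber is zero-dimensional, a generic choice of $V(y)$ avoids it, and $\Pi_\fJ(X_\LL \cap V(y))$ is a proper subscheme of $\Pi_\fJ(X)_\LL$ of dimension $\dim(\Pi_{\fJ \cup \{p\}}(X)) - 1$.

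The main obstacle will be the second case of part (i), where one needs to verify that $y$ remains sufficiently generic after restriction to the generic fibers of $f$; this requires combining \autoref{rem_gen_Bertini} with a generic-flatness argument to control the dimensions of projected images. For part (ii), the subtlety is the passage from the algebraic polynomial $\mathcal{C}(R_\LL/yR_\LL;\ttt)$ to the scheme-theoretic multidegrees $\deg_{\PP_\LL}^\bn$, which demands a careful check of the hypotheses in \autoref{thm_two_mdegs} for the quotient $R_\LL/yR_\LL$.
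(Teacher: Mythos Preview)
Your route differs genuinely from the paper's. The paper does not work directly with the generic element: it first passes to an infinite base field, proves both statements for a \emph{general} element $x \in [R]_{\ee_p}$ by citing \cite[Theorem~3.7]{POSITIVITY} for~(i) and \cite[Lemmas~3.7,~3.9]{cidruiz2021mixed} for~(ii), and then transfers the conclusions to the generic element $y$ via \cite[Lemma~3.10]{POSITIVITY}. The entire proof is thus a few lines of citations plus a general-to-generic transfer.

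Your direct approach is more self-contained and essentially sound, but the subtleties you flag are real. For~(ii), the identity $\mathcal{C}(R_\LL/yR_\LL;\ttt) = t_p\cdot\mathcal{C}(R_\LL;\ttt)$ is immediate, yet invoking \autoref{thm_two_mdegs} is awkward because $R_\LL/yR_\LL$ may carry nontrivial $\nn$-torsion; a cleaner fix is to bypass $\mathcal{C}$ altogether and use the Hilbert-polynomial identity $P_{R_\LL/yR_\LL}(\ttt) = P_{R_\LL}(\ttt) - P_{R_\LL}(\ttt - \ee_p)$, from which the Pascal recursion in the binomial basis of \autoref{eq_Hilb_poly} reads off the claimed equality of top coefficients directly. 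For~(i), your case analysis reduces to the set-theoretic identity $\Pi_{\fJ\cup\{p\}}(X_\LL \cap V(y)) = \Pi_{\fJ\cup\{p\}}(X)_\LL \cap V(y)$ together with a fiber-dimension argument for the further projection forgetting the $p$-th factor; this works, but in the zero-dimensional-fiber subcase you must check that the coordinates of the generic fiber lie in an extension of $K(\Pi_\fJ(X))$ algebraically independent from $\LL = \kk(z_0,\dots,z_\ell)$, so that $y = \sum z_j y_j$ genuinely avoids those finitely many points. In effect you are reproving \cite[Theorem~3.7]{POSITIVITY} for the generic rather than the general hyperplane, which buys self-containment at the cost of redoing work the paper outsources.
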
	
	\begin{proof}
		We may assume that the field $\kk$ is infinite; indeed, by changing $\kk$ by a purely transcendental field extension we can keep all the assumptions (see \autoref{rem_infinite_field}).
		
		Let $x \in [R]_{\ee_p}$ be a general element.
		From \cite[Theorem 3.7]{POSITIVITY}, we obtain that 
		$$
		\dim\left(\Pi_{\fJ}(X \cap V(x))\right) \;=\; \min\big\{ \dim(\Pi_{\fJ}(X)), \dim(\Pi_{\fJ \cup \{p\}}(X))-1 \big\}
		$$
		for all $\fJ \subset [p]$.
		As a consequence of \cite[Lemma 3.7, Lemma 3.9]{cidruiz2021mixed}, we get that $\deg_{\PP}^{\bn - \ee_p}\left(X \cap V(x)\right) = \deg_{\PP}^\bn(X)$ for all $\bn = (n_1,\ldots,n_p) \in \NN^p$ with $|\bn| = \dim(X)$ and $n_p \ge 1$.
		Therefore, the results of both parts (i) and (ii) follow by applying \cite[Lemma 3.10]{POSITIVITY} and by utilizing the two latter formulas for cutting with a general element $x \in [R]_{\ee_p}$.
	\end{proof}

	We are now ready for the proof of the main result of this section.

	\begin{proof}[Proof of \autoref{thm_mdeg_proj}]
		Notice that it suffices to consider the case $\fJ = [p-1] \subset [p]$.
		Let $Y$ be the image of $X$ under the natural projection $\Pi_{[p-1]} : \PP \rightarrow \PP' = \PP_\kk^{m_1} \times_\kk \cdots \times_\kk \PP_\kk^{m_{p-1}}$.
		We proceed by induction on $s = \dim(X) - \dim(Y)$.
		If $s = 0$, then the result follows directly from \autoref{lem_proj_same_dim}.
		
		Suppose that $s > 0$. 
		This implies that $\dim(\Pi_p(X)) \ge 1$ since the function $r(\fJ) = \dim\left(\Pi_{\fJ}(X)\right)$ is a rank function (see \cite[Proposition 5.1]{POSITIVITY}).
		We substitute $X$ and $Y$ by $X_\LL$ and $Y_\LL$, respectively, and we consider the generic element $y$ in $\left[S_\LL\right]_{\ee_p}$.
		Let $X'= X_\LL \cap V(y) \subset \PP_\LL$ and $Y' = \Pi_{[p-1]}(X') \subset \PP_\LL'$.
		From \autoref{rem_gen_Bertini} and \autoref{thm_dim_proj}, we obtain that $X'$ is an integral scheme and the equalities 
		$$
		\dim\left(X'\right) \,=\, \dim(X)-1 \quad \text{ and } \quad \dim(Y') \,=\, \min\{\dim(X)-1, \dim(Y)\} =  \dim(Y).
		$$
		This implies that $Y'$ and $Y_\LL$ have the same dimension, and thus we necessarily get $Y' = Y_\LL$ because $Y' \subset Y_\LL$ by construction.
		Also, for all $\bn = (n_1,\ldots,n_p) \in \NN^p$ such that $|\bn| = \dim(X)$ and $n_p \ge 1$, we have that $\deg_{\PP_\LL}^{\bn-\ee_p}(X') = \deg_\PP^{\bn}(X)$.
		Therefore,  since $\dim(X') - \dim(Y') = s-1$, we can apply the inductive hypothesis to the scheme $X'$ and the corresponding image $Y'=Y_\LL$ under the projection $\Pi_{[p-1]}$.
		This completes the proof of the theorem.
	\end{proof}

	We illustrate the result of \autoref{thm_mdeg_proj} with the following example that constitutes a  variation of \cite[Example 1.1]{POSITIVITY}.
		
	\begin{example}
		\label{examp_mdeg_project}
		Let $A=\kk[v_1,v_2,v_3][w_1,w_2,w_3]$ be $\NN^3$-graded with $\deg(v_{i})=\mathbf{0} \in \NN^3$ and $\deg(w_{i})=\ee_i \in \NN^3$.
		Let 
		$
		S = \kk\left[x_0,\ldots,x_3\right]\left[y_0,\ldots,y_3\right]\left[z_0,\ldots,z_3\right]
		$ be $\NN^3$-graded with
		 $\deg(x_i)=\ee_1$, $\deg(y_i)=\ee_2$ and $\deg(z_i)=\ee_3$.
		Consider the $\NN^3$-graded $\kk$-algebra homomorphism 
		$$
		\varphi = S \rightarrow A, \qquad 
		\begin{array}{llll}
			x_0 \mapsto  w_1, & x_1 \mapsto v_1^2w_1, & x_2 \mapsto v_1^2w_1, & x_3 \mapsto v_1w_1, \\
			y_0 \mapsto  w_2, & y_1 \mapsto v_1w_2, & y_2 \mapsto v_2w_2, & y_3 \mapsto (v_1^2+v_2^2)w_2,  \\
			z_0 \mapsto  w_3, & z_1 \mapsto v_1^2w_3, & z_2 \mapsto v_2w_3, & z_3 \mapsto v_3^2w_3. 
		\end{array}
		$$
		Let $P \subset S$ be the $\NN^3$-graded prime ideal $P = \Ker(\varphi)$, which is given by
		$$
		P \,= \, \left(
		\begin{array}{c}
				{x}_{1}-{x}_{2}, \, {y}_{3}{z}_{0}-{y}_{0}{z}_{1}-{y}_{2}{z}_{2}, \, {y}_{2}{z}_{0}-{y}_{0}{z}_{2},\, {x}_{2}{z}_{0}-{x}_{0}{z}_{1},\, {y}_{1}^{2}+{y}_{2}^{2}-{y}_{0}{y}_{3}, \, {x}_{3}{y}_{0}-{x}_{0}{y}_{1}\\
				{x}_{2}{y}_{0}-{x}_{3}{y}_{1}, \, {x}_{0}{x}_{2}-{x}_{3
				}^{2}, \, {y}_{0}{y}_{2}{z}_{1}+{y}_{2}^{2}{z}_{2}-{y}_{0}{y}_{3}{z}_{2}, \, {x}_{3}{y}_{2}{z}_{1}-{x}_{2}{y}_{1}{z}_{2}, \,{x}_{0}{y}_{2}{z}_{1}-{x}_{3}{y}_{1}{z}_{2},\\
				{x}_{3}{y}_{1}{z}_{1}-{x}_{0}{y}_{3}{z}_{1}+{x}_{2}{y}_{2}{z}_{2}, \, {x}_{3}{y}_{1}{z}_{0}-{x}_{0}{y}_{0}{z}_{1}, \, {x}_{3}^{2}{z}_{0}-{x}_{0}^{2}{z}_{1}
		\end{array}
		\right).
		$$
		Let $X = \multProj(S/P) \subset  \PP_\kk^3\times_\kk \PP_\kk^3 \times_\kk \PP_\kk^3$ be the corresponding integral closed subscheme.
		The multidegree polynomial of $X$ is equal to 
		$$
		\mathcal{C}(X; t_1,t_2,t_3) \,=\, 2\,{t}_{1}^{3}{t}_{2}^{3}+4\,{t}_{1}^{3}{t}_{2}^{2}{t}_{3}+2\,{t}_{1}^{3}{t}_{2}{t}_{3}^{2}+
		2\,{t}_{1}^{2}{t}_{2}^{3}{t}_{3}+4\,{t}_{1}^{2}{t}_{2}^{2}{t}_{3}^{2}.
		$$
		For ease of exposition, we use the variables indexed by $\fJ$ to express the multidegree polynomial of a projection $\Pi_{\fJ}(X)$.
		Under this notation, we obtain that the multidegree polynomials of the possible projections are described in the following table:
		\begin{equation*}		
		\setlength{\arraycolsep}{15pt}
		\begin{array}{lll}
				\Pi_1(X) \subset  \PP_\kk^3, &   \dim(\Pi_1(X)) = 1,  & \mathcal{C}(\Pi_1(X); t_1) = 2\,{t}_{1}^{2}  \\
				\Pi_2(X) \subset  \PP_\kk^3, &  \dim(\Pi_2(X)) = 2, & \mathcal{C}(\Pi_2(X); t_2) = 2\,{t}_{2} \\ 
				\Pi_3(X) \subset  \PP_\kk^3, &  \dim(\Pi_3(X)) = 3, & \mathcal{C}(\Pi_3(X); t_3) = 1 \\
				\Pi_{1,2}(X) \subset  \PP_\kk^3 \times_\kk \PP_\kk^3, &  \dim(\Pi_{1,2}(X)) = 2, & \mathcal{C}(\Pi_{1,2}(X); t_1,t_2) = 2\,{t}_{1}^{3}{t}_{2}+4\,{t}_{1}^{2}{t}_{2}^{2} \\
				\Pi_{1,3}(X) \subset  \PP_\kk^3 \times_\kk \PP_\kk^3, &  \dim(\Pi_{1,3}(X)) = 3, & \mathcal{C}(\Pi_{1,3}(X); t_1,t_3) = 2\,{t}_{1}^{3}+2\,{t}_{1}^{2}{t}_{3} \\
				\Pi_{2,3}(X) \subset  \PP_\kk^3 \times_\kk \PP_\kk^3, &  \dim(\Pi_{2,3}(X)) = 3, & \mathcal{C}(\Pi_{2,3}(X); t_2,t_3) = 2\,{t}_{2}^{3}+4\,{t}_{2}^{2}{t}_{3}+2\,{t}_{2}{t}_{3}^{2}. \\
		\end{array}
	\end{equation*}	
	Here one may see that the multidegrees of the projections behave as predicted by \autoref{thm_mdeg_proj}.
	The computations of this example can be carried out by hand or by utilizing \texttt{Macaulay2} \cite{M2}.
	\end{example}

	The next simple example shows that, if we drop the condition of \autoref{thm_mdeg_proj} that $R$ is a domain, then the multidegrees  can grow arbitrarily under projections.

	\begin{example}
		\label{examp_grow_mdeg_proj}
		Suppose $S = \kk[x_0,x_1,x_2][y_0,y_1,y_2]$ is $\NN^2$-graded with $\deg(x_i) = \ee_1$ and $\deg(y_i) = \ee_2$.
		Let $a >0$ be a positive integer.
		Consider the monomial ideal $J = \left(x_0^2,x_0x_1,x_1y_0,y_0^a\right) \subset S$ and its corresponding closed subscheme $Z = \multProj(S/J) \subset  \PP_\kk^2 \times_\kk \PP_\kk^2$.
		One can check that 
		$$
		\mathcal{C}(Z;t_1,t_1) = t_1t_2 \quad \text{ and } \quad \deg_{\PP_\kk^2}(\Pi_2(Z)) = a.
		$$
		This follows at once from the fact $J = (x_0,y_0) \cap (x_0^2,x_1,y_0^a)$ is a primary decomposition of $J$.
	\end{example}
	
	As a direct consequence of \autoref{thm_mdeg_proj}, we obtain that the projection of a multiplicity-free variety is also multiplicity-free. 
		A closed subscheme $Z  \subset  \PP' = \PP_\kk^{r_1} \times_\kk \cdots \times_\kk \PP_\kk^{r_k}$ is said to be \emph{multiplicity-free} if $\deg_{\PP'}^\bn(Z)$ is either $0$ or $1$ for all $\bn \in \NN^k$ with $|\bn| = \dim(Z)$ (that is, ${\rm MDeg}_{\PP'}(Z)=1$).

	\begin{corollary}	
		\label{cor_mult_free}
		If $X \subset   \PP$ is multiplicity-free, then $\Pi_\fJ(X)$ is multiplicity-free for all $\fJ \subset [p]$.
	\end{corollary}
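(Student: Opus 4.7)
The plan is to derive this corollary as an immediate consequence of \autoref{thm_mdeg_proj}, specifically the divisibility statement in its second part. There is essentially nothing more to do than read off the conclusion.

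First, fix $\fJ \subset [p]$ and any $\dd \in \NN^k$ with $|\dd| = \dim(\Pi_\fJ(X))$. By definition of multiplicity-free, it suffices to show $\deg_{\PP'}^\dd\bigl(\Pi_\fJ(X)\bigr) \in \{0,1\}$. If this multidegree is zero, we are done, so assume $\deg_{\PP'}^\dd\bigl(\Pi_\fJ(X)\bigr) > 0$. By \autoref{thm_mdeg_proj}, there exists $\bn \in \NN^p$ with $|\bn| = \dim(X)$ such that $\deg_\PP^\bn(X) > 0$ and $\deg_{\PP'}^\dd\bigl(\Pi_\fJ(X)\bigr)$ divides $\deg_\PP^\bn(X)$. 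Since $X$ is multiplicity-free, the hypothesis forces $\deg_\PP^\bn(X) = 1$, and hence $\deg_{\PP'}^\dd\bigl(\Pi_\fJ(X)\bigr)$ is a positive integer dividing $1$, i.e., it equals $1$.

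There is no real obstacle here: the divisibility clause in \autoref{thm_mdeg_proj} was tailored for precisely this kind of application, and we do not even need the quantitative inequality $\mathrm{MDeg}_{\PP'}(\Pi_\fJ(X)) \le \mathrm{MDeg}_\PP(X)$ in isolation (which would only give $\deg_{\PP'}^\dd \le 1$ without the integrality argument through divisibility, though of course the inequality alone together with positivity also suffices). The only subtlety worth acknowledging is that the setup in this section assumes $R$ is a domain, so $X$ is integral; this is what legitimizes the use of \autoref{thm_mdeg_proj}, and the failure of the conclusion without this integrality hypothesis is exactly what is illustrated in \autoref{examp_grow_mdeg_proj}.
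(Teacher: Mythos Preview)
Your proof is correct and matches the paper's approach: the corollary is stated there as a direct consequence of \autoref{thm_mdeg_proj}, with no further argument given. If anything, the inequality ${\rm MDeg}_{\PP'}(\Pi_\fJ(X)) \le {\rm MDeg}_\PP(X) = 1$ alone already forces every multidegree of the projection into $\{0,1\}$ (multidegrees are non-negative integers by definition), so the divisibility clause is not strictly needed here.
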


	\section{The multidegrees of prime ideals}
	\label{sect_multdeg_primes}

	In this section, we investigate several special properties that are enjoyed by the multidegrees of prime ideals.
	We use the following setup throughout this section. 
	
	\begin{setup}
		\label{setup_prime_ideals}
		Let $\kk$ be a field and $S = \kk[x_1,\ldots,x_n]$ be a standard $\NN^p$-graded polynomial ring.
	\end{setup}

	The theorem below proves that the multidegree polynomial and the $\mathcal{G}$-multidegree polynomial coincide for a prime ideal.

	\begin{theorem}
		Assume \autoref{setup_prime_ideals}.
		Let $P \subset S$ be an $S$-homogeneous prime ideal.
		Then we have the equality 
		$$
		\mathcal{C}(S/P;\ttt) \,= \, \mathcal{G}(S/P; \ttt).
		$$
	\end{theorem}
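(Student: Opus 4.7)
The plan is to pass through the multigraded generic initial ideal and exploit the equidimensionality theorem of Kalkbrener--Sturmfels \cite{KS_INIT_PRIMES}. One may first assume $\kk$ is infinite, since a purely transcendental extension preserves the $\mathcal{K}$-polynomial and keeps $P$ prime (cf.\ \autoref{rem_infinite_field}). Let $>$ be any monomial order on $S$ refining the $\NN^p$-grading and set $J := \gin_>(P)$. Because generic coordinate changes and Gr\"obner degenerations preserve the multigraded Hilbert series, $\mathcal{K}(S/P;\ttt) = \mathcal{K}(S/J;\ttt)$, and therefore $\mathcal{G}(S/P;\ttt) = \mathcal{G}(S/J;\ttt)$ and $\mathcal{C}(S/P;\ttt) = \mathcal{C}(S/J;\ttt)$.

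Since $J$ is Borel-fixed for the multigraded Borel group, \cite[Proposition 3.12]{CDNG_MINORS} (the reference given just after \autoref{def_G_multdeg}) expresses $\mathcal{G}$ in terms of minimal primes:
\[
\mathcal{G}(S/J;\ttt) \;=\; \sum_{Q \,\in\, \Min_S(S/J)} \text{length}_{S_Q}\!\left((S/J)_Q\right) \cdot \mathcal{C}(S/Q;\ttt).
\]
Each minimal prime $Q$ of $J$ is a monomial prime, so \autoref{rem_multdeg_facts}(ii)(b) makes $\mathcal{C}(S/Q;\ttt)$ a single monomial of total degree $\codim(Q)$.

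The last ingredient is Kalkbrener--Sturmfels applied to $\gin_>(P) = \init_>(gP)$ for a generic coordinate change $g$ (so $gP$ is still prime): $\sqrt{J}$ is equidimensional, hence $\codim(Q) = \codim(P)$ for every $Q \in \Min_S(S/J)$. Thus every monomial appearing in the displayed sum has total degree exactly $\codim(P)$, so $\mathcal{G}(S/J;\ttt)$ is homogeneous of degree $\codim(P)$. Combining this with \autoref{rem_multdeg_facts}(i), which says that the degree-$\codim$ part of $\mathcal{G}$ is precisely $\mathcal{C}$, gives
\[
\mathcal{G}(S/P;\ttt) \;=\; \mathcal{G}(S/J;\ttt) \;=\; \mathcal{C}(S/J;\ttt) \;=\; \mathcal{C}(S/P;\ttt).
\]

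The main obstacle is to isolate the right structural input: proving $\mathcal{C} = \mathcal{G}$ for primes is equivalent to showing that every minimal monomial of $\supp\mathcal{K}(S/P;\mathbf{1}-\ttt)$ sits in the lowest total degree, and this does not seem to follow from the polymatroid description of $\supp\mathcal{C}(S/P;\ttt)$ from \autoref{thm_pos_multdeg} alone. The Kalkbrener--Sturmfels equidimensionality of $\sqrt{\gin_>(P)}$, passed through the Borel-fixed formula of Conca--De~Negri--Gorla, supplies exactly the missing uniform-codimension statement.
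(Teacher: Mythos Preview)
Your proof is correct and follows essentially the same strategy as the paper: pass to a monomial degeneration, invoke Kalkbrener--Sturmfels for equidimensionality, and use that for monomial primes both $\mathcal{C}$ and $\mathcal{G}$ are single monomials whose total degree is the codimension. The only substantive difference is that the paper works with an arbitrary initial ideal $\init_>(P)$ and argues directly via a prime filtration (using that $L_1 \subset L_2$ implies $\mathcal{C}(S/L_1;\ttt)$ divides $\mathcal{C}(S/L_2;\ttt)$ for monomial primes), whereas you pass to $\gin_>(P)$ in order to invoke \cite[Proposition~3.12]{CDNG_MINORS} as a black box for Borel-fixed ideals. The paper's route is slightly more self-contained and avoids the field extension to make $\kk$ infinite, but the underlying mechanism is the same.
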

	\begin{proof}
		Choose a monomial order $>$ on $S$.
		By using \cite{KS_INIT_PRIMES}, we obtain that $\sqrt{\init(P)}$ is equidimensional and connected in codimension one (see also \cite[Appendix~1]{HUNEKE_LOC_COHOM}).
		For any monomial prime ideal $L = (x_{j_1},\ldots,x_{j_k}) \subset S$, we have 
		$$
		\mathcal{C}(S/L;\ttt) \, = \, \ttt^\mathbf{a}  \quad \text{ and } \quad  \mathcal{G}(S/L;\ttt)  \, = \,   \ttt^\mathbf{a}
		$$
		where $\mathbf{a} = (a_1,\ldots,a_p) \in \NN^p$ with $a_i = |\lbrace l \mid \deg(x_{j_l}) = \ee_i \rbrace|$ (see \autoref{rem_multdeg_facts}).
		
		Let $P_1,\ldots,P_h, P_{h+1}, \ldots, P_{h+s}$ be the associated primes of $\init(P)$, where $P_1,\ldots,P_h$ are the minimal ones. 
		We choose a prime filtration 
		$$
		0 = M_0 \subset M_1 \subset \cdots \subset M_r = S/\init(P)
		$$
		where $M_j/M_{j-1} \cong S/L_j$ and $L_j \subset S$ is a monomial prime ideal (see, e.g., \cite[Proposition 3.7]{EISEN_COMM}).
		All the associated primes of $\init(P)$ must appear among the $L_j$'s and all the $L_j$'s must contain a minimal prime of $\init(P)$.
		Therefore, by utilizing the additivity of Hilbert functions over this prime filtration and the above remarks, we obtain 
		$$
		\mathcal{C}\left(S/\init(P);\ttt\right) \, = \, \mathcal{G}\left(S/\init(P);\ttt\right) \, = \, \sum_{j=1}^h \text{length}_{S_{P_j}}\left((S/\init(P))_{P_j}\right) \mathcal{C}(S/P_j;\ttt);
		$$
		indeed: if $L_{k_1} \subset L_{k_2}$ then $\mathcal{C}(S/L_{k_1}; \ttt)$ divides $\mathcal{C}(S/L_{k_2}; \ttt)$; all the minimal $P_j$'s have the same codimension; and by localizing we get that the number of times that a minimal $P_j$ appears among the $L_k$'s is equal to the length of the Artinian local ring $(S/\init(P))_{P_j}$.
		This concludes the proof because $S/P$ and $S/\init(P)$ have the same multigraded Hilbert function.
	\end{proof}

	The remainder of this section is dedicated to multigraded generic initial ideals.
	Thus we further specify the notation by fixing the following setup. 
	
	\begin{setup}
		\label{setup_gin}
		Let $\kk$ be an infinite field, $\bm = (m_1,\ldots,m_p) \in \ZZ_+^p$ be a vector of positive integers, and 
		$$
		S = \kk\left[x_{i,j} \mid 1 \le i \le p, 0 \le j \le m_i \right]
		$$ 
		be a standard $\NN^p$-graded polynomial with $\deg(x_{i,j}) = \ee_i \in \NN^p$. 
		Hence we have $\multProj(S) = \PP = \PP_\kk^{m_1} \times_\kk \cdots \times_\kk \PP_\kk^{m_p}$.
		The group naturally acting on $S$ as the group of multigraded $\kk$-algebra isomorphisms is $G = \text{GL}_{m_1+1}(\kk) \times \cdots \times \text{GL}_{m_p+1}(\kk)$, where $\text{GL}_{m_i+1}(\kk)$ is the group of invertible $(m_i+1)\times (m_i+1)$ matrices over $\kk$.
		The \emph{Borel subgroup of $G$} is given by $B = B_{m_1+1}(\kk) \times \cdots \times B_{m_p+1}(\kk)$, where $B_{m_i+1}(\kk)$ is the subgroup of $\text{GL}_{m_i+1}(\kk)$ consisting of upper triangular invertible matrices.
		Let $>$ be a monomial order on $S$ that satisfies $x_{i,0} > x_{i,1} > \cdots > x_{i,m_i}$ for all $1 \le i \le p$. 
	\end{setup}

	Much as in the singly-graded case (see \cite[\S 15.9]{EISEN_COMM}), we have the following definition. 
	
	\begin{definition}
		Let $I \subset S$ be an $S$-homogeneous ideal.
		The \emph{multigraded generic initial ideal} $\gin_>(I)$ of $I$ with respect to $>$ is the ideal $\init_>(g(I))$, where $g$ belongs to a Zariski dense open subset $\mathcal{U} \subset G$.
	\end{definition}
	
	Whenever the used monomial order $>$ is clear from the context, we shall write $\init(I)$ and $\gin(I)$ instead of $\init_>(I)$ and $\gin_>(I)$, respectively.
	An $S$-homogeneous ideal $I \subset S$ is said to be \emph{Borel-fixed} if $g(I) = I$ for all $g \in B$.
	Similarly to the singly-graded setting, it can be shown that $\gin(I)$ is Borel-fixed (see \cite[Theorem 15.20]{EISEN_COMM}). 	
	
	\begin{remark}
		One can extend the known properties of Borel-fixed ideals in the singly-graded setting (see \cite[Chapter 15]{EISEN_COMM}) to obtain the following statements:
		\begin{enumerate}[\rm (i)]
			\item Any Borel-fixed prime ideal in $S$ is of the form
			$$
			P_\mathbf{a} \,=\, \left(x_{i,j} \mid 1 \le i \le p \text{ and } 0 \le j < a_i\right)
			$$
			for some $\mathbf{a} = (a_1,\ldots,a_p) \in \NN^p$ (see \cite[Lemma 3.1]{CDNG_MINORS}).
			\item An $S$-homogeneous Borel-fixed ideal $I \subset S$ is a monomial ideal and all its associated primes are also Borel-fixed (see \cite[Lemma 3.2]{CDNG_MINORS}).
		\end{enumerate}
	\end{remark}

	The following theorem shows that the radical of the multigraded generic initial ideal of any prime is Cohen-Macaulay.
	This demonstrates a very special behavior of prime ideals in a multigraded setting.
	
	\begin{theorem}
		\label{thm_sqrt_gin_CM}
		Assume \autoref{setup_gin}.
		Let $P \subset S$ be an $S$-homogeneous prime ideal.
		Then $\sqrt{\gin(P)}$ is a Cohen-Macaulay ideal.
	\end{theorem}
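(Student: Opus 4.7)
The plan is to reduce Cohen--Macaulayness of $\sqrt{\gin_>(P)}$ to a shellability statement for a simplicial complex attached to a discrete polymatroid.

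Since $\gin_>(P)$ is Borel-fixed, each of its associated primes is Borel-fixed and hence of the form $P_\mathbf{a} = (x_{i,j} \mid 0 \le j < a_i)$ for some $\mathbf{a} \in \mathbb{N}^p$. The Kalkbrener--Sturmfels theorem \cite{KS_INIT_PRIMES} forces $\sqrt{\gin_>(P)}$ to be equidimensional of codimension $c := \codim(P)$, so its minimal primes are exactly the $P_\mathbf{a}$ with $|\mathbf{a}| = c$; let $\mathcal{P} \subset \mathbb{N}^p$ be this index set. Next, since $S/P$ and $S/\gin_>(P)$ share the multigraded Hilbert series, they share the multidegree polynomial. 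Iterating \autoref{rem_multdeg_facts}(ii) on the variables generating $P_\mathbf{a}$ yields $\mathcal{C}(S/P_\mathbf{a}; \ttt) = \ttt^\mathbf{a}$, and together with the associativity formula \autoref{rem_multdeg_facts}(iv) this produces
$$
\mathcal{C}(S/P; \ttt) \;=\; \mathcal{C}(S/\gin_>(P); \ttt) \;=\; \sum_{\mathbf{a} \in \mathcal{P}} \ell_\mathbf{a} \, \ttt^\mathbf{a}
$$
with each $\ell_\mathbf{a} > 0$. Thus $\mathcal{P}$ equals the support of $\mathcal{C}(S/P; \ttt)$, which is a discrete polymatroid by \autoref{thm_pos_multdeg}.

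The Stanley--Reisner complex $\Delta$ of $\sqrt{\gin_>(P)} = \bigcap_{\mathbf{a} \in \mathcal{P}} P_\mathbf{a}$ is pure, with facets $F_\mathbf{a} = \{x_{i,j} \mid j \ge a_i\}$ indexed by $\mathbf{a} \in \mathcal{P}$; proving Cohen--Macaulayness therefore reduces to showing that $\Delta$ is shellable. I would order $\mathcal{P}$ by decreasing lexicographic order and verify the shelling condition using the symmetric exchange axiom of $\mathcal{P}$. Concretely, given an earlier facet $F_{\mathbf{a}_i}$ and a later $F_{\mathbf{a}_k}$, the smallest index $s$ where $\mathbf{a}_i$ and $\mathbf{a}_k$ differ satisfies $(\mathbf{a}_i)_s > (\mathbf{a}_k)_s$; the exchange axiom supplies $t > s$ with $(\mathbf{a}_i)_t < (\mathbf{a}_k)_t$ and $\mathbf{a}_j := \mathbf{a}_k + \mathbf{e}_s - \mathbf{e}_t \in \mathcal{P}$. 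A short computation then shows $\mathbf{a}_j$ precedes $\mathbf{a}_k$ in decreasing-lex order, $|F_{\mathbf{a}_j} \cap F_{\mathbf{a}_k}| = |F_{\mathbf{a}_k}| - 1$, and $F_{\mathbf{a}_i} \cap F_{\mathbf{a}_k} \subset F_{\mathbf{a}_j} \cap F_{\mathbf{a}_k}$, which is precisely the shelling condition.

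The main obstacle is the combinatorial shellability argument: although the polymatroid exchange axiom is the natural tool, one must carefully translate it into statements about the multigraded vertex set $\bigsqcup_i \{x_{i,0}, \ldots, x_{i,m_i}\}$ and verify all inclusion and size conditions simultaneously. An alternative route that bypasses this bookkeeping is to invoke the known pure shellability of base complexes of discrete polymatroids directly, whereupon Cohen--Macaulayness of $\sqrt{\gin_>(P)}$ follows from Reisner's criterion.
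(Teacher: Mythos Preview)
Your argument is correct and reaches the conclusion by a route that is essentially Alexander-dual to the paper's. Both proofs open identically: the minimal primes of $\sqrt{\gin_>(P)}$ are the Borel-fixed primes $P_{\mathbf a}$ indexed by a set $\mathcal P\subset\NN^p$ which, via the associativity formula, coincides with the support of $\mathcal C(S/P;\ttt)$ and is therefore a discrete polymatroid by \autoref{thm_pos_multdeg}. From there the paper passes to the Alexander dual $K$ of $J=\sqrt{\gin_>(P)}$, recognizes $K$ as the polarization of the polymatroidal ideal $M=\big(x_{1,0}^{a_{k,1}}\cdots x_{p,0}^{a_{k,p}}\big)_k$ in $\kk[x_{1,0},\ldots,x_{p,0}]$, invokes the fact that polymatroidal ideals have linear resolutions, and concludes via Eagon--Reiner. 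You instead work directly on the Stanley--Reisner complex of $J$ and exhibit a shelling ordered by decreasing lex on $\mathcal P$, using the (symmetric) exchange property of the polymatroid; the verifications you sketch ($\mathbf{a}_j>_{\mathrm{lex}}\mathbf{a}_k$, $F_{\mathbf{a}_j}\cap F_{\mathbf{a}_k}=F_{\mathbf{a}_k}\setminus\{x_{s,(\mathbf{a}_k)_s}\}$, and $x_{s,(\mathbf{a}_k)_s}\notin F_{\mathbf{a}_i}$) go through exactly as stated. Your approach is more self-contained, avoiding Alexander duality and polarization; the paper's buys brevity by quoting the linear-resolution property of polymatroidal ideals as a black box. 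The paper's remark immediately following the theorem in fact notes that the same argument yields shellability (pointing to \cite[Proposition~8.2.5]{HERZOG_HIBI}), so the two routes are acknowledged there as two sides of the same coin.
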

\begin{proof}
		Let $J = \sqrt{\gin(P)} \subset S$.
		Let $P_1,\ldots,P_h$ be the minimal primes of $J$.
		As a consequence of \cite{KS_INIT_PRIMES}, all the $P_k$'s have the same codimension.
		Each $P_k$ is Borel-fixed, and so we can write
		$$
		P_k  \, = \, \left(x_{i,j} \mid 1 \le i \le p, 0 \le j < a_{k,i}\right)
		$$
		for some $\mathbf{a}_k = (a_{k,1},\ldots,a_{k,p}) \in \NN^p$ with $|\mathbf{a}_k| = \codim(P)$.
		With this notation we now have
		$$
		\mathcal{C}(S/P; \ttt)  \,= \, \mathcal{C}(S/\gin(P); \ttt)  \,= \, \sum_{k=1}^{h} \text{length}_{S_{P_k}}\left((S/\gin(P))_{P_k}\right) \, \ttt^{\mathbf{a}_k} 
		\quad 
		\text{ and }
		\quad 
		\mathcal{C}(S/J;  \ttt) \,= \, \sum_{k=1}^{h}  \, \ttt^{\mathbf{a}_k}. 
		$$
		The Alexander dual of $J$ is a monomial ideal $K \subset S$ given by
		$
		K  =  \left( \xx_{\mathbf{a}_1}, \ldots, \xx_{\mathbf{a}_h} \right) 
		$
		where
		$$
		\xx_{\mathbf{a}_k} = \prod_{1 \le i \le p, \,0 \le j < a_{k,i}} x_{i,j};
		$$
		see \cite[Corollary 1.5.5]{HERZOG_HIBI}.
		The Eagon-Reiner theorem (see, e.g., \cite[Theorem 8.1.9]{HERZOG_HIBI}) yields that $J$ is Cohen-Macaulay if and only if $K$ has a linear resolution.
		Thus we concentrate on proving that $K$ has a linear resolution.

		From \autoref{thm_pos_multdeg}, we obtain that the support of $\mathcal{C}(S/P; \ttt)$, which coincides with the set of lattice points $\lbrace \mathbf{a}_1,\ldots,\mathbf{a}_h \rbrace \subset \NN^p$, is a discrete polymatroid.
		We construct the following monomial ideal 
		$$
		M = \left( \xx_{\mathbf{a}_1}',\ldots, \xx_{\mathbf{a}_h}' \right) \quad \text{ with } \quad \xx_{\mathbf{a}_k}' = x_{1,0}^{a_{k,1}} x_{2,0}^{a_{k,2}} \cdots x_{p,0}^{a_{k,p}}
		$$
		in the polynomial subring $\kk[x_{1,0}, x_{2,0}, \ldots, x_{p,0}] \subset S$.
		By construction and following the notation of \cite[\S 12.6]{HERZOG_HIBI}, we say that that $M$ is a polymatroidal ideal, and so the ideal $M$ has a linear resolution by \cite[Theorem 12.6.2, Proposition 8.2.1]{HERZOG_HIBI}.
		Notice that $K$ can be seen naturally as the polarization of $M$ by sending $\xx_{\mathbf{a}_k}'$ to $\xx_{\mathbf{a}_k}$.
		Finally, by standard properties of polarization (see \cite[\S 1.6]{HERZOG_HIBI}), it follows that $K$ also has a linear resolution. 
		This concludes the proof of the theorem.
\end{proof}

\begin{remark}
	For the more combinatorially inclined reader it should be the mentioned that the same proof of \autoref{thm_sqrt_gin_CM} shows that the simplicial complex associated to the radical monomial ideal $\sqrt{\gin(P)}$ is shellable (see \cite[Proposition 8.2.5]{HERZOG_HIBI}).
\end{remark}

\begin{remark}
	\label{rem_not_CM_init}
	One may find many examples of a prime ideal $P$ and a monomial order $>$ such that $\sqrt{\init_>(P)}$ is not Cohen-Macaulay.
	For a simple instance, see \cite[Remark 2.8(1)]{CV_SQRFREE}. 
	Also, for more examples of this type and related results, see the book \cite[Chapter 5]{BOOK_UPCOMMING}.
\end{remark}

For an ideal $I \subset S$, we consider the following invariant 
$$
\text{MLength}(I) \;:=\; \max \big\lbrace \text{length}_{S_\pp}\left((S/I)_\pp\right) \mid \pp \in \Min_S(S/I) \big\rbrace
$$
that measures the maximal length of the minimal primary components of $I$.
The next theorem shows that, for a given $S$-homogeneous prime ideal, the multiplicities of the minimal primary components of the multigraded generic initial ideal have a very restrictive behavior with respect to the natural projections.

\begin{theorem}
	\label{thm_proj_gin}
	Assume \autoref{setup_gin}.
	Let $P \subset S$ be an $S$-homogeneous prime ideal.
	Let $\fJ = \{j_1,\ldots,j_k\} \subset [p]$ be a subset.
	Then we have the inequality 
	$$
	{\rm MLength}\left(\gin\left(P_{(\fJ)}\right)\right) \;\le\; {\rm MLength}\left(\gin\left(P\right)\right).
	$$
	Moreover, for a given minimal prime $\pp \in \Min_{S_{(\fJ)}}\left(S_{(\fJ)}/\gin(P_{(\fJ)})\right)$ of $\gin(P_{(\fJ)})$, we can find a minimal prime $\qqq \in \Min_S\left(S/\gin(P)\right)$ of $\gin(P)$ such that the length of the $\pp$-primary component of $\gin\left(P_{(\fJ)}\right)$ divides the length of the $\qqq$-primary component of $\gin(P)$.
\end{theorem}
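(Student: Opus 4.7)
The plan is to identify the lengths of the minimal primary components of $\gin(P)$ (and of $\gin(P_{(\fJ)})$) with the coefficients of $\mathcal{C}(S/P;\ttt)$ (respectively $\mathcal{C}(S_{(\fJ)}/P_{(\fJ)};\ttt_\fJ)$), to re-read those coefficients as multidegrees of $X=\multProj(S/P)$ and of $\Pi_\fJ(X)$ via \autoref{thm_two_mdegs}, and then to invoke \autoref{thm_mdeg_proj}.

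First I would unpack $\gin(P)$. Its minimal primes are Borel-fixed, hence of the form $P_\mathbf{a}=(x_{i,j}\mid 0\le j<a_i)$ with $|\mathbf{a}|=\codim(P)$; by \cite{KS_INIT_PRIMES} they all have the same codimension. Repeating the prime-filtration/additivity computation from the proof of \autoref{thm_sqrt_gin_CM}, together with $\mathcal{C}(S/P_\mathbf{a};\ttt)=\ttt^\mathbf{a}$ (\autoref{rem_multdeg_facts}(ii)(b)), yields
$$
\mathcal{C}(S/P;\ttt)\;=\;\mathcal{C}(S/\gin(P);\ttt)\;=\;\sum_{P_\mathbf{a}\in\Min_S(S/\gin(P))}\mu_\mathbf{a}\,\ttt^\mathbf{a},
$$
where $\mu_\mathbf{a}:=\text{length}_{S_{P_\mathbf{a}}}\!\bigl((S/\gin(P))_{P_\mathbf{a}}\bigr)$ is the length of the $P_\mathbf{a}$-primary component of $\gin(P)$. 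Because $P_{(\fJ)}$ is again a prime ideal in the standard multigraded polynomial ring $S_{(\fJ)}$, the identical argument applied inside $S_{(\fJ)}$ expresses $\mathcal{C}(S_{(\fJ)}/P_{(\fJ)};\ttt_\fJ)=\sum_{\mathbf{c}}\mu'_\mathbf{c}\,\ttt_\fJ^\mathbf{c}$ where $\mu'_\mathbf{c}$ is the length of the corresponding primary component of $\gin(P_{(\fJ)})$.

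Next I would invoke \autoref{thm_two_mdegs}. Since $S/P$ and $S_{(\fJ)}/P_{(\fJ)}$ are domains, $\HH^0_\nn$ vanishes for both, and the same coefficients appear as multidegrees: $\mu_\mathbf{a}=\deg_\PP^{\bm-\mathbf{a}}(X)$ and $\mu'_\mathbf{c}=\deg_{\PP'}^{\bm_\fJ-\mathbf{c}}(\Pi_\fJ(X))$, where I use that $\multProj(S_{(\fJ)}/P_{(\fJ)})=\Pi_\fJ(X)$ (the elimination-theoretic image of an integral closed subscheme under a coordinate projection of a product of projective spaces). Now \autoref{thm_mdeg_proj} says that each positive multidegree of $\Pi_\fJ(X)$ divides some positive multidegree of $X$; transported through the dictionary above, this gives exactly the statement that every $\mu'_\pp$ divides some $\mu_\qqq$. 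The inequality on $\text{MLength}$ follows at once, since divisibility of positive integers implies the numerical bound $\mu'_\pp\le\mu_\qqq$.

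The main obstacle is purely bookkeeping: verifying the scheme-theoretic equality $\Pi_\fJ(X)=\multProj(S_{(\fJ)}/P_{(\fJ)})$ and keeping straight the variable substitution $\mathbf{a}\leftrightarrow\bm-\bn$ that switches between the Borel-prime exponents indexing $\Min_S(S/\gin(P))$ and the multidegree indices $\bn$ with $|\bn|=\dim(X)$. Once this dictionary is in place, both the divisibility and the $\text{MLength}$ inequality drop out of \autoref{thm_mdeg_proj} with no additional work.
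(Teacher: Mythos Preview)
Your proposal is correct and follows essentially the same route as the paper: identify the lengths of minimal primary components of $\gin(P)$ with the coefficients of $\mathcal{C}(S/P;\ttt)$ via the Borel-fixed structure and associativity, translate these into multidegrees of $X$ and $\Pi_\fJ(X)$ via \autoref{thm_two_mdegs}, and conclude with \autoref{thm_mdeg_proj}. One point to make precise: your assertion that ``since $S/P$ and $S_{(\fJ)}/P_{(\fJ)}$ are domains, $\HH^0_\nn$ vanishes for both'' is not automatic---it requires the primes to be \emph{relevant}, which is not assumed in \autoref{setup_gin}. The paper handles this by first adjoining one new variable $x_{i,m_i+1}$ in each degree $\ee_i$ (as in \autoref{rem_relev_prim}), so that $P$ becomes relevant while $\gin(P)$ is unchanged; you should include this reduction before invoking \autoref{thm_two_mdegs}.
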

\begin{proof}
	As in \autoref{rem_relev_prim}, we can add new variables $x_{i,m_i +1}$ to $S$ with $\deg(x_{i,m_i+1}) = \ee_i$, and then assume that $P$ is relevant. 
	By further specifying $x_{i,m_i} > x_{i,m_i+1}$, we may see that $\gin(P)$ will not involve the new variables $x_{i,m_i+1}$. 
	So, without any loss of generality, we assume that $P$ is a relevant prime ideal.

	Let $I = \gin(P) \subset S$, and
	 $P_1,\ldots,P_h$ be the minimal primes of $I$.
	 For each $1 \le k \le h$,  we know that $
	P_k  =  \left(x_{i,j} \mid 1 \le i \le p, 0 \le j < a_{k,i}\right)
	$
	for some $\mathbf{a}_k = (a_{k,1},\ldots,a_{k,p}) \in \NN^p$ with $|\mathbf{a}_k| = \codim(P)$.
	Hence by utilizing the associativity formula for multidegrees (\autoref{rem_multdeg_facts}(iv)) and \autoref{rem_multdeg_facts}(ii), we obtain
	\begin{equation}
		\label{eq_muldeg_as_multiplicities}
		\mathcal{C}(S/P;\ttt) \,=\,  \mathcal{C}(S/I;\ttt) \,=\, \sum_{k=1}^h \text{length}_{S_{P_k}}\left((S/I)_{P_k}\right) \, \ttt^{\mathbf{a}_k}. 
	\end{equation}
	Let $X = \multProj(S/P) \subset \PP$ be the integral closed subscheme corresponding with $P$.
	Set $r = \dim(X) = m_1 + \cdots + m_p - \codim(P)$.
	From \autoref{thm_two_mdegs} it follows that 
	$$
	\mathcal{C}(S/P;\ttt) \,=\, \sum_{\substack{\bn \in \NN^p \\ |\bn| = r}} \deg_{\PP}^\bn(X) \ttt^{\bm-\bn}.
	$$
	Then comparing coefficients yields the equality 
	$$
	\text{length}_{S_{P_k}}\left((S/I)_{P_k}\right) \,=\, \deg_{\PP}^{\bm - \mathbf{a}_k}(X) 
	$$
	for all $1 \le k \le h$.
	
	Set $\PP' = \PP_\kk^{m_{j_1}} \times_\kk \cdots \times_\kk \PP_\kk^{m_{j_k}}$.
	By applying the same argument to the projection corresponding to $\fJ$, we obtain that for any minimal prime $\pp\subset S_{(\fJ)}$ of $\gin(P_{(\fJ)})$, there exists some $\dd \in \NN^k$ with $|\dd| = \dim\left(\Pi_\fJ(X)\right)$ such that
	$$
		\text{length}_{{\left(S_{(\fJ)}\right)}_{\pp}}\left(\left(S_{(\fJ)}/\gin(P_{(\fJ)})\right)_{\pp}\right) \,=\, \deg_{\PP'}^{\dd}\left(\Pi_\fJ(X)\right).
	$$
	Therefore, the result  follows as a consequence of \autoref{thm_mdeg_proj}.
\end{proof}

We single out the following result that was obtained in \autoref{eq_muldeg_as_multiplicities}.

\begin{remark}
		\label{rem_mdeg_mult_min_primes}
		Let $P \subset S$ be an $S$-homogeneous prime ideal. 
		Let $I = \gin(P) \subset S$. 
		Let $P_1,\ldots,P_h$ be the minimal primes of $I$ and $Q_1,\ldots,Q_h$ be the corresponding minimal primary components. 
		The non-zero coefficients of $\mathcal{C}(S/P;\ttt)$ are in one-to-one correspondence with the multiplicities $\text{length}_{{S_{P_k}}}\left((S/Q_k)_{P_k}\right)$.
\end{remark}

The next example shows that the minimal primary components of $\gin(P_{(\fJ)})$ may be determined by embedded primary components of $\gin(P)$.
However, \autoref{thm_proj_gin} still gives a tight relation between the minimal primary components of $\gin(P_{(\fJ)})$ and  $\gin(P)$.

\begin{example}
	\label{examp_project_gin}
	Here we continue using the prime ideal $P \subset S$ from \autoref{examp_mdeg_project}, where $S$ is the $\NN^3$-graded polynomial ring $S = \kk\left[x_0,\ldots,x_3\right]\left[y_0,\ldots,y_3\right]\left[z_0,\ldots,z_3\right]$ with
	$\deg(x_i)=\ee_1$, $\deg(y_i)=\ee_2$ and $\deg(z_i)=\ee_3$.
	The corresponding multidegree polynomial is given by 
	$$
	\mathcal{C}(R/P; t_1,t_2,t_3) \,=\, 2\,{t}_{1}^{3}{t}_{2}^{3}+4\,{t}_{1}^{3}{t}_{2}^{2}{t}_{3}+2\,{t}_{1}^{3}{t}_{2}{t}_{3}^{2}+
	2\,{t}_{1}^{2}{t}_{2}^{3}{t}_{3}+4\,{t}_{1}^{2}{t}_{2}^{2}{t}_{3}^{2}.
	$$
	As we have pointed out before in this section, since $\gin(P)$ is Borel-fixed and $\sqrt{\gin(P)}$ is equidimensional (we proved that it is even Cohen-Macaulay in \autoref{thm_sqrt_gin_CM}), it follows that the multidegree polynomial determines the minimal primes of $\gin(P)$.
	More precisely, we have the following correspondence between terms of $\mathcal{C}(R/P;\ttt)$ and minimal primary components of $\gin(P)$:
	\begin{equation*}		
		\setlength{\arraycolsep}{10pt}
		\begin{array}{lll}
			2\,t_1^3t_2^3 &   \longleftrightarrow  & \left({x}_{0}, \, {x}_{1}, \, {x}_{2}^{2}, \, {y}_{0}, \, {y}_{1}, \, {y}_{2}\right) \,=:\, M_1 \\
			4\,t_1^3t_2^2t_3 &  \longleftrightarrow  & \left({x}_{0},\, {x}_{1}^{2}, \, {x}_{1}{x}_{2}, \, {x}_{2}^{3}, \, {y}_{0}, \, {y}_{1}, \, {z}_{0}\right) \,=:\, M_2 \\ 
			2\,t_1^3t_2t_3^2 &  \longleftrightarrow  & \left({x}_{0},\, {x}_{1}, \, {x}_{2}^{2}, \, {y}_{0}, \, {z}_{0}, \, {z}_{1}\right) \,=:\, M_3\\ 
			2\,t_1^2t_2^3t_3 &  \longleftrightarrow  & \left({x}_{0}, \, {x}_{1}^{2}, \, {y}_{0}, \, {y}_{1}, \, {y}_{2}, \, {z}_{0}\right) \,=:\, M_4\\ 
			4\,t_1^2t_2^2t_3^2 &  \longleftrightarrow  & \left({x}_{0}, \, {x}_{1}^{2}, \, {y}_{0}, \, {y}_{1}^{2}, \, {z}_{0}, \, {z}_{1}\right) \,=:\, M_5.\\
		\end{array}
	\end{equation*}	
	One can compute the whole expression of $\gin(P)$ by utilizing \texttt{Macaulay2} \cite{M2}, and one may check that $\gin(P)$ has a total of $9$ primary components and that all the embedded ones have codimension $7$.
	We consider the projection $\fJ = \{2,3\}$.
 	The corresponding projected ideal is given by 
 	$$
 	Q = P_{({2,3})} = \left({y}_{3}{z}_{0}-{y}_{0}{z}_{1}-{y}_{2}{z}_{2},\, {y}_{2}{z}_{0}-{y}_{0}{z}_{2},\, {y}_{1}^{2}+{y}_{2}^{2}-{y}_{0}{y}_{3}\right) \,\subset  T = S_{(2,3)}.
 	$$
 	We already saw that the multidegree polynomial of this projection is equal to 
 	$$
 	\mathcal{C}(T/Q; t_2,t_3) = 2\,{t}_{2}^{3}+4\,{t}_{2}^{2}{t}_{3}+2\,{t}_{2}{t}_{3}^{2}.
 	$$
 	The correspondence between terms of $\mathcal{C}(T/Q; t_2,t_3)$ and minimal primary components of $\gin(Q)$ is depicted below:
 		\begin{equation*}		
 		\setlength{\arraycolsep}{10pt}
 		\begin{array}{lll}
 			2\,t_2^3 &   \longleftrightarrow  & \left({y}_{0}, \, {y}_{1}, \, {y}_{2}^{2}\right)  \\
 			4\,t_2^2t_3 &  \longleftrightarrow  & \left({y}_{0}^{2}, \, {y}_{0}{y}_{1}, \, {y}_{1}^{3}, \, {z}_{0}\right) \\ 
 			2\,t_2t_3^2 &  \longleftrightarrow  & \left({y}_{0}^{2}, \, {z}_{0}, \, {z}_{1}\right).\\ 
 		\end{array}
 	\end{equation*}	
 	Again, $\gin(Q)$ can be computed by using \texttt{Macaulay2} \cite{M2}, or just by realizing that $\gin(Q) = \gin(P)_{(2,3)}$.
 	It turns out that $\gin(Q)$ has only three primary components and that all of them are minimal.
 	Notice that $M_1 \cap T = (y_0,y_1,y_2)$, $M_2 \cap T = (y_0,y_1,z_0)$, $M_3 \cap T = (y_0,z_0,z_1)$, $M_4 \cap T = (y_0,y_1,y_2,z_0)$ and $M_5 \cap T = (y_0, y_1^2, z_0, z_1)$.
 	So, it follows that all the primary components of $\gin(Q)$ come from embedded components of $\gin(P)$, and yet the length of any (minimal) component of $\gin(Q)$ divides the length of some minimal component of $\gin(P)$; as predicted by \autoref{thm_proj_gin}. 
\end{example}

\begin{remark}
	\label{rem_counter_gin_not_prime}
	If we drop the prime condition, then the result of \autoref{thm_proj_gin} may not hold.
	Indeed, let $J = \left(x_0^2,x_0x_1,x_1y_0,y_0^a\right) \subset S$ be the $\NN^2$-graded ideal of \autoref{examp_grow_mdeg_proj}, then we have that $\text{MLength}\left(\gin(J)\right) = 1$ and $\text{MLength}\left(\gin(J_{(2)})\right) = a$.
\end{remark}

\section{Multiplicity-free varieties}
\label{sect_mult_free}

The main goal of this section is to obtain an alternative proof of the result of Brion \cite{BRION_MULT_FREE} regarding multiplicity-free varieties.
For organizational purposes we divide the section into two subsections.

\subsection{Cohomology and associated primes under flat degenerations}	
	\label{subsect_cohom_assP_degen}
	This subsection contains some technical results that will be needed in our treatment of multiplicity-free varieties.
	Here we study the behavior of cohomology and associated primes under flat degenerations. 
	Our approach is inspired by ideas related to the fiber-full scheme (\cite{LOC_FIB_FULL_SCHEME, FIBER_FULL,FIB_FULL_SCHEME}).
	
\begin{proposition}[{cf.~\cite[Theorem A]{LOC_FIB_FULL_SCHEME}}]
	\label{prop_criterion_flat_cohom}
	Let $(A, \nnn, \kk)$ be a Noetherian local ring with residue field $\kk$.
	Let $\mathbb{Y} \subset \PP_A^r$ be a closed subscheme and $\mathfrak{F}$ be a coherent sheaf on $\mathbb{Y}$.
	Suppose that $\mathfrak{F}$ is flat over $A$.
	Let $Y = \mathbb{Y} \times_{\Spec(A)} \Spec(\kk) \subset \PP_\kk^r$ be the closed subscheme given as the special fiber and $F = \mathfrak{F} \otimes_A \kk$ be the corresponding coherent sheaf on  $Y$.
	
	If \,$\Hom_\kk\left(\HH^i(Y, F),\, \HH^{i+1}(Y, F)\right) = 0$ for all $i \ge 0$, then the following statements hold: 
	\begin{enumerate}[\rm (i)]
		\item $\HH^i(\mathbb{Y}, \mathfrak{F})$ is $A$-flat for all $i \ge 0$.
		\item The natural map $\HH^i(\mathbb{Y}, \mathfrak{F}) \otimes_A B \rightarrow \HH^i(\mathbb{Y} \times_{\Spec(A)} \Spec(B), \mathfrak{F} \otimes_A B)$ is an isomorphism for all $i \ge 0$ and any $A$-algebra $B$.
	\end{enumerate}
\end{proposition}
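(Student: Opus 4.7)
The plan is to derive this from Grothendieck's cohomology and base change theorem (see, e.g., \cite[\S III.12]{HARTSHORNE}), in the spirit of the fiber-full scheme results of \cite{LOC_FIB_FULL_SCHEME, FIBER_FULL, FIB_FULL_SCHEME}.

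First I would replace cohomology by a perfect complex: since $\mathbb{Y}$ is proper over $A$ and $\mathfrak{F}$ is coherent and $A$-flat, there exists a bounded complex $K^\bullet$ of finitely generated free $A$-modules such that $\HH^i(\mathbb{Y} \times_{\Spec(A)} \Spec(B),\, \mathfrak{F} \otimes_A B) \cong \HH^i(K^\bullet \otimes_A B)$ naturally in every $A$-algebra $B$. Writing $\phi^i : \HH^i(K^\bullet) \otimes_A \kk \to \HH^i(Y, F)$ for the base change map, both conclusions of the proposition will follow once one knows that every $\phi^i$ is an isomorphism and every $\HH^i(K^\bullet)$ is $A$-flat; assertion (ii) for a general $B$ then follows from flatness by the standard analysis of the short exact sequences $0 \to Z^i \to K^i \to B^{i+1} \to 0$ and $0 \to B^i \to Z^i \to \HH^i(K^\bullet) \to 0$, which remain exact after tensoring with $B$ once the cohomology modules are flat.

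The key observation is that, since $\kk$ is a field, $\Hom_\kk(V, W) = 0$ forces $V = 0$ or $W = 0$, so the hypothesis is equivalent to: for each $i$, at least one of $\HH^i(Y, F)$, $\HH^{i+1}(Y, F)$ vanishes. Using this, I would show that $\phi^i$ is surjective for every $i$ by a case analysis. If $\HH^i(Y, F) = 0$ then $\phi^i$ is trivially surjective. If instead $\HH^{i+1}(Y, F) = 0$, upper semicontinuity of fibrewise cohomology dimensions forces $\HH^{i+1}(K^\bullet \otimes_A \kappa(\pp)) = 0$ in an open neighborhood of $\nnn$; combined with the fact that $\phi^{i+1}$ is then automatically surjective (hence an isomorphism by Grothendieck) on that neighborhood, this forces $\HH^{i+1}(K^\bullet)$ itself to vanish near $\nnn$, so in particular it is locally free there. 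Grothendieck's criterion then propagates surjectivity: $\phi^{i+1}$ surjective and $\HH^{i+1}(K^\bullet)$ locally free near $\nnn$ imply that $\phi^i$ is surjective.

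Once every $\phi^i$ is surjective, Grothendieck's theorem gives that every $\phi^i$ is an isomorphism, and a final application of the same criterion (surjectivity of $\phi^{i-1}$ combined with $\phi^i$ being an isomorphism forces $\HH^i(K^\bullet)$ to be locally free near $\nnn$, hence $A$-flat since $A$ is local and the module is finitely generated) yields the flatness of every $\HH^i(K^\bullet)$. I expect the only delicate point to be the bookkeeping through the short exact sequences of cycles and boundaries needed to extract assertion (ii) from flatness of the $\HH^i(K^\bullet)$; modulo that, the argument is a clean specialization of the fiber-full machinery of \cite{LOC_FIB_FULL_SCHEME} under the particularly favorable hypothesis $\Hom_\kk\!\left(\HH^i(Y,F),\, \HH^{i+1}(Y,F)\right) = 0$.
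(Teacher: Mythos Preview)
Your argument is correct but takes a different route from the paper's. You unpack the hypothesis $\Hom_\kk(\HH^i(Y,F),\HH^{i+1}(Y,F))=0$ as the dichotomy ``$\HH^i(Y,F)=0$ or $\HH^{i+1}(Y,F)=0$'' and feed this directly into Hartshorne's base-change criterion \cite[Theorem~III.12.11]{HARTSHORNE} to obtain surjectivity (hence bijectivity) of every $\phi^i$ and freeness of every $R^if_*\mathfrak{F}$. (The upper semicontinuity detour is in fact unnecessary: once $\HH^{i+1}(Y,F)=0$, the map $\phi^{i+1}$ is trivially surjective, hence an isomorphism, so $R^{i+1}f_*\mathfrak{F}\otimes_A\kk=0$ and Nakayama kills $R^{i+1}f_*\mathfrak{F}$ directly, which is already locally free.) The paper instead proceeds by induction on the length of Artinian quotients $C$ of $A$: a socle element gives $0\to\kk\to C\to C'\to 0$, and after tensoring with $\mathfrak{F}$ the connecting map $\delta_i$ in the long exact cohomology sequence factors (via the inductive hypothesis) through a $\kk$-linear map $\HH^i(Y,F)\to\HH^{i+1}(Y,F)$, which must vanish by assumption; this yields surjectivity of base change over every Artinian quotient, and the general case then follows via the Theorem on Formal Functions together with a limit argument. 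Your approach is shorter and rests on a single classical theorem; the paper's approach never unpacks the Hom condition as a vanishing dichotomy and stays closer in spirit to the fiber-full machinery it cites.
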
	
\begin{proof}
	For any $A$-algebra $B$, set $\mathbb{Y}_B := \mathbb{Y} \times_{\Spec(A)} \Spec(B)$ and $\mathfrak{F}_B := \mathfrak{F} \otimes_A B$.
	
	First, we prove that for any Artinian quotient ring $C$ of $A$ both statements of the proposition hold.
	We proceed by induction on $\text{length}(C)$.
	The base case $\text{length}(C)=1$ is clear because we necessarily have $C = \kk$.
	Thus, we assume that $\text{length}(C) > 1$.
	We may choose an ideal $\aaa \subset C$ generated by a socle element of $C$ to obtain a short exact sequence 
	$$
	0 \rightarrow \aaa \rightarrow C \rightarrow C' \rightarrow 0
	$$	
	with $\aaa \cong \kk$ and $\text{length}(C') = \text{length}(C)-1$.
	By tensoring with $- \otimes_A \mathfrak{F}$, since $\mathfrak{F}$ is flat over $A$, we obtain the short exact sequence 
	$$
	0 \rightarrow F \rightarrow \mathfrak{F}_C \rightarrow \mathfrak{F}_{C'} \rightarrow 0.
	$$
	Consequently, we get an induced long exact sequence in cohomology
	$$
	\HH^{i-1}(\mathbb{Y}_{C'},\mathfrak{F}_{C'}) \,\xrightarrow{\delta_{i-1}}\, \HH^i(Y, F) \,\rightarrow\, \HH^i(\mathbb{Y}_C,\mathfrak{F}_C)\,\rightarrow\, \HH^i(\mathbb{Y}_{C'},\mathfrak{F}_{C'}) \,\xrightarrow{\delta_i}\, \HH^{i+1}(Y, F).
	$$
	The inductive hypothesis yields the isomorphism $\HH^i(\mathbb{Y}_{C'},\mathfrak{F}_{C'}) \otimes_{C'} \kk \xrightarrow{\cong} \HH^i(Y, F)$.
	Then the map $\delta_i$ gets the following factorization 
	$$
	\HH^i(\mathbb{Y}_{C'},\mathfrak{F}_{C'}) \; \twoheadrightarrow \; \HH^i(\mathbb{Y}_{C'},\mathfrak{F}_{C'}) \otimes_{C'} \kk \cong \HH^i(Y, F) \; \xrightarrow{\beta_i} \; \HH^{i+1}(Y, F)
	$$
	where $\beta_i$ is a $\kk$-linear map.
	The condition $\Hom_\kk\left(\HH^i(Y, F),\, \HH^{i+1}(Y, F)\right) = 0$ implies that $\beta_i = 0$, hence $\delta_i = 0$.
	For all $i \ge 0$, we have proved that the natural map $\HH^i(\mathbb{Y}_C,\mathfrak{F}_C)\rightarrow \HH^i(\mathbb{Y}_{C'},\mathfrak{F}_{C'})$ is surjective, and the natural map $\HH^i(\mathbb{Y}_{C'},\mathfrak{F}_{C'}) \rightarrow \HH^i(Y, F)$ is surjective by induction. 
	Therefore, it follows that the natural map 
	$$
	\HH^i(\mathbb{Y}_{C},\mathfrak{F}_{C}) \,\rightarrow\, \HH^i(Y, F)
	$$
	is also surjective for all $i \ge 0$.
	Finally, by applying \cite[Theorem 12.11]{HARTSHORNE}, we obtain both statements for $C$.
	This establishes the proposition for any Artinian quotient ring of $A$. 
	
	For each $q \ge 1$, let $\mathbb{Y}_q = \mathbb{Y} \times_{\Spec(A)} \Spec(A/\nnn^q)$ and $\mathfrak{F}_q = \mathfrak{F} \otimes_{A} A/\nnn^q$.
	By the above step we have that $\HH^i(\mathbb{Y}_q, \mathfrak{F}_q)$ is flat over $A/\nnn^q$.
	The Theorem on Formal Functions (see \cite[Theorem III.11.1]{HARTSHORNE}, \cite[\href{https://stacks.math.columbia.edu/tag/02OC}{Tag 02OC}]{stacks-project}) yields the following isomorphism 
	$$
	\HH^i(\mathbb{Y}, \mathfrak{F}) \otimes_A \widehat{A} \;\; \xrightarrow{\; \cong \; } \;\; \underset{\leftarrow}{\lim}\;\HH^i(\mathbb{Y}_q, \mathfrak{F}_q).
	$$
	So by applying \cite[\href{https://stacks.math.columbia.edu/tag/0912}{Tag 0912}]{stacks-project} to the inverse system $\left(\HH^i(\mathbb{Y}_q, \mathfrak{F}_q)\right)_{q\ge 1}$, we obtain that $\HH^i(\mathbb{Y}, \mathfrak{F}) \otimes_A \widehat{A}$ is a flat $A$-module.
	Hence $\HH^i(\mathbb{Y}, \mathfrak{F})$ is $A$-flat for all $i \ge 0$.
	This settles part (i) of the proposition.
	Then part (ii) follows from (i) (see, e.g., \cite[Lemma 2.6]{LOC_FIB_FULL_SCHEME}).
\end{proof}

\begin{notation}
	Given projective scheme $Y \subset \PP_\kk^r = \Proj(B)$ over a field $\kk$, we denote by $I_Y \subset B$ the corresponding saturated homogeneous ideal.
\end{notation}

\begin{lemma}
	\label{lem_cohom_CM_assP}
	Let $Y \subset \PP_\kk^r = \Proj(B)$ be a projective scheme over a field $\kk$.
	The following statements hold: 
	\begin{enumerate}[\rm (i)]
		\item For $1 \le i \le r$, the ideal $I_Y$ has an associated prime of codimension $i$ if and only if the following limit 
		$$
		\underset{\nu \to \infty}{\lim} \, \frac{\dim_\kk\left(\HH^{r-i}\left(Y, \OO_Y(-\nu)\right)\right)}{\nu^{r-i}/(r-i)!} \; \in \; \ZZ_+
		$$ 
		is a positive integer.
		
		\item $Y$ is Cohen-Macaulay and equidimensional if and only if $\HH^i(Y, \OO_Y(-\nu)) = 0$ for all $i<\dim(Y)$ and $\nu \gg 0$.
	\end{enumerate}
\end{lemma}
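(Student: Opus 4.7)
The plan is to translate the cohomology of $Y$ into invariants of the graded $B$-module $B/I_Y$ via graded local duality, and then apply Auslander--Buchsbaum prime by prime.

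The first step is to establish, for every $1 \le i \le r$ and all $\nu \gg 0$, the uniform identity
\[
\dim_\kk \HH^{r-i}\!\left(Y,\OO_Y(-\nu)\right) \;=\; \dim_\kk \Ext^{i}_B(B/I_Y,B)_{\nu-r-1}.
\]
For $i \le r-1$, this combines the standard isomorphism $\HH^{j}(Y,\OO_Y(\mu)) \cong \HL^{j+1}(B/I_Y)_{\mu}$ (valid for $j \ge 1$) with graded local duality over the polynomial ring $B$. For $i = r$, the saturation of $I_Y$ together with $(B/I_Y)_{-\nu}=0$ for $\nu > 0$ yields $\HH^0(Y,\OO_Y(-\nu)) \cong \HL^1(B/I_Y)_{-\nu}$, after which local duality closes the identification. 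With this identity in hand, both parts become statements about Hilbert functions of $\Ext^{i}_B(B/I_Y,B)$.

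For part (i), the algebraic core is the sharp bound $\dim_B \Ext^{i}_B(B/I_Y,B) \le r+1-i$, with equality if and only if $I_Y$ has an associated prime of height $i$. The bound follows from Auslander--Buchsbaum: if $\pp$ has height $h < i$, then $\pd_{B_\pp}((B/I_Y)_\pp) \le h < i$, so $\Ext^{i}$ vanishes at $\pp$. Equality forces the support of $\Ext^{i}$ to meet some $\pp$ of height exactly $i$; at such $\pp$, nonvanishing together with $\pd_{B_\pp} \le i$ forces $\pd_{B_\pp} = i$, hence $\depth((B/I_Y)_\pp) = 0$, i.e., $\pp \in \Ass_B(B/I_Y)$; the converse is symmetric. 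When the dimension is maximal, the Hilbert polynomial of $\Ext^{i}_B(B/I_Y,B)$ has degree exactly $r-i$ with leading coefficient $e_i/(r-i)!$, where $e_i$ is the (positive integer) Samuel multiplicity, yielding a positive limit; otherwise the degree strictly drops and the limit equals $0$.

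For part (ii), Step 1 recasts the vanishing hypothesis as: $\Ext^{j}_B(B/I_Y,B)$ is $\mm$-torsion for all $r-d+1 \le j \le r$. Localizing at any prime $\pp \ne \mm$ containing $I_Y$ with $\HT(\pp) = h$, Auslander--Buchsbaum rephrases this as $h - \depth((B/I_Y)_\pp) \le r-d$. Comparing with $\dim(B/I_Y)_\pp \le h - r + d$ forces $Y$ to be equidimensional of dimension $d$ (a lower-dimensional component would contribute a minimal prime $\qqq$ of height $>r-d$, at which $\Ext^{\HT(\qqq)}$ fails to be $\mm$-torsion); once equidimensional, $\dim(B/I_Y)_\pp = h - r + d$ at every $\pp$ of $Y$, so the bound becomes $\depth = \dim$ at $\pp$, i.e., Cohen--Macaulayness at each point of $Y$. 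The converse follows by reversing the chain. The main obstacle is the pointwise bookkeeping in (ii): one must track how each individual $\Ext^{j}$ witnesses a distinct geometric defect (a lower-dimensional component, an embedded associated prime, or a failure of local Cohen--Macaulayness), via localization at the appropriate prime, and verify that the collective vanishing genuinely imposes both equidimensionality and the Cohen--Macaulay condition.
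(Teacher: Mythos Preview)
Your proof is correct and follows essentially the same approach as the paper: both translate the sheaf cohomology into graded pieces of $\Ext_B^i(B/I_Y,B)$ via local duality and the standard comparison $\HH^j(Y,\OO_Y(\mu)) \cong \HL^{j+1}(B/I_Y)_\mu$, then use the bound $\dim \Ext_B^i(B/I_Y,B) \le r+1-i$ and its equality case to detect associated primes, and for part~(ii) localize the $\mm$-torsion condition on Ext at homogeneous primes. The only cosmetic differences are that the paper cites \cite{EHV} for the associated-prime criterion while you rederive it via Auslander--Buchsbaum, and the paper spells out the bridge between ``$\OO_{Y,y}$ is Cohen--Macaulay'' and ``$(B/I_Y)_\pp$ is Cohen--Macaulay'' through the identification $C_{(\pp)} = \OO_{Y,y}[w,w^{-1}]$, which you leave implicit.
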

\begin{proof}
	Let $\mm = [B]_+$ be the graded irrelevant ideal of $B$.
	Set $C = B/I_Y$.
	
	(i) 
	We recall the following known properties of Ext modules: 
	\begin{itemize}[-]
		\item $\codim\left(\Ext_B^i(C, B)\right) \ge i$;
		\item a prime $\pp \in \Spec(B)$ of codimension $i$ is an associated prime of $C$ if and only if it is a minimal prime of $\Ext_B^i(C, B)$;
	\end{itemize}
	see, e.g., \cite[Theorem 1.1]{EHV}.
	Since $\delta_i = \dim\left(\Ext_B^{i}(C, B)\right) \le r+1-i$, we consider the usual truncated notion of multiplicity 
	$$
	e_{r+1-i}\left(\Ext_B^{i}(C, B)\right) \, = \, \underset{\nu \to \infty}{\lim} \, \frac{\dim_\kk\left(\left[\Ext_B^{i}(C, B)\right]_\nu\right)}{\nu^{r-i}/(r-i)!} \,=\, \begin{cases}
		e\left(\Ext_B^{i}(C, B)\right) & \text{if } \delta_i = r+1-i\\
		0 & \text{otherwise}
	\end{cases}
	$$
	(see, e.g., \cite{SERRE_LOC}).
	Then the graded local duality theorem (see \cite[Theorem 3.6.19]{BRUNS_HERZOG}) implies that 
	$$
	e_{r+1-i}\left(\Ext_B^{i}(C, B)\right) \; = \; \underset{\nu \to \infty}{\lim} \, \frac{\dim_\kk\left(\left[\HL^{r+1-i}(C)\right]_{-\nu}\right)}{\nu^{r-i}/(r-i)!}.
	$$
	There is a short exact sequence 
	$
	0 \rightarrow C \rightarrow \bigoplus_{\nu \in \ZZ} \HH^0(Y, \OO_Y(\nu)) \rightarrow \HL^1(C) \rightarrow 0
	$ 
	and an isomorphism 
	$$
	\HL^{j+1}(C) \;\cong\; \bigoplus_{\nu \in \ZZ} \HH^j(Y, \OO_Y(\nu)) \quad \text{ for all $j \ge 1$;}
	$$
	see \cite[Theorem A4.1]{EISEN_COMM}.
	As a consequence, we have the equality
	$$
	e_{r+1-i}\left(\Ext_B^{i}(C, B)\right) \; = \; \underset{\nu \to \infty}{\lim} \, \frac{\dim_\kk\left(\HH^{r-i}(Y, \OO_{Y}(-\nu))\right)}{\nu^{r-i}/(r-i)!}.
	$$
	Finally, the result follows because $e_{r+1-i}\left(\Ext_B^{i}(C, B)\right)$ is a positive integer if and only if $\Ext_B^{i}(C, B)$ has dimension $r+1-i$ (i.e., when $C$ has an associated prime of codimension $i$).
%
 	
	(ii) This is the content of the proof of \cite[Theorem III.7.6(b)]{HARTSHORNE}. 
	Alternatively, we can argue as follows.
	Let $y \in Y$ be a point and $\pp \in \Spec(B)$ the corresponding prime in $B$. 
	By definition we have that $\OO_{Y, y} = \big\lbrace \frac{f}{g} \mid f, g \in C \text{ homogeneous elements}, \,g \not\in \pp C,\, \deg(f) = \deg(g) \big\rbrace$.
	Let $C_{(\pp)}$ be the localization of $C$ at all the homogeneous elements not in $\pp$ (we follow the notation of \cite[\S 1.5]{BRUNS_HERZOG}); this is an ${}^*$local ring ring with ${}^*$maximal ideal $\pp C_{(\pp)}$.
	By \cite[Exercise 2.1.27]{BRUNS_HERZOG}, $C_{(\pp)}$ is Cohen-Macaulay if and only if $C_\pp$ is.
	Let $w \in [C]_1$ be a linear form not in $\pp C$.
	Notice that we have the equality $C_{(\pp)} = \OO_{Y, y}[w,w^{-1}]$ and that $w$ can be seen as an indeterminate over $\OO_{Y, y}$.
	Therefore, $\OO_{Y, y}$ is Cohen-Macaulay if and only if $C_\pp$ is Cohen-Macaulay.
	
	The same arguments in the proof of part (i) imply that $\HH^i(Y, \OO_Y(-\nu)) = 0$ for all $i<\dim(Y), \nu \gg 0$ if and only if $\Supp_B\left(\Ext_B^{r-i}(C, B)\right) \subseteq \{\mm\}$ for all $i < \dim(Y)$.
	The latter condition is equivalent to the following two conditions: 
	\begin{enumerate}[(a)]
		\item  all minimal primes of $I_Y$ have codimension equal to $r - \dim(Y)$;
		\item $C_\pp$ is Cohen-Macaulay for all $\pp \in V(I_Y) \setminus \{\mm\} \subset \Spec(B)$;
	\end{enumerate} indeed, this follows from the local duality theorem and the depth sensitivity of local cohomology (see \cite[Theorem 3.5.7]{BRUNS_HERZOG}).
	So, the proof of part (ii) is complete.
\end{proof}

By combining the above results we obtain the following theorem. 

\begin{theorem}
	\label{thm_fib_full_crit}
	Let $(A, \nnn, \kk)$ be a Noetherian local ring with residue field $\kk$.
	Let $\mathbb{Y} \subset \PP_A^r$ be a closed subscheme and suppose that $\mathbb{Y}$ is flat over $A$.
	Let $Y = \mathbb{Y} \times_{\Spec(A)} \Spec(\kk) \subset \PP_\kk^r$ be the special fiber.
	Let $\qqq \in \Spec(A)$ and $Z = \mathbb{Y} \times_{\Spec(A)} \Spec(\kappa(\qqq)) \subset \PP_{\kappa(\qqq)}^r$.
	
	If \,$\Hom_\kk\left(\HH^i(Y, \OO_Y(-\nu)),\, \HH^{i+1}(Y, \OO_Y(-\nu))\right) = 0$ for all $i \ge 0$ and $\nu \gg 0$, then the following statements hold: 
	\begin{enumerate}[\rm (i)]
		\item $Y$ is Cohen-Macaulay and equidimensional if and only if $Z$ is both.
		\item For $1 \le i \le r$, $I_Y$ has an associated prime of codimension $i$ if and only if $I_Z$ has one.
	\end{enumerate}
\end{theorem}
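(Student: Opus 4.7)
The plan is to invoke \autoref{prop_criterion_flat_cohom} with the coherent sheaf $\mathfrak{F}=\OO_{\mathbb{Y}}(-\nu)$ on the projective $A$-scheme $\mathbb{Y}$, once for each sufficiently large $\nu$. Since $\mathbb{Y}$ is $A$-flat, so is $\OO_{\mathbb{Y}}(-\nu)$, and its restriction to the special fiber is $\OO_Y(-\nu)$; thus the hypothesis of the theorem is precisely what the proposition requires. Applying the proposition yields, for every $i\ge 0$ and every $\nu\gg 0$, that $M_{i,\nu} := \HH^i(\mathbb{Y},\OO_{\mathbb{Y}}(-\nu))$ is $A$-flat and that base change along any $A$-algebra is an isomorphism. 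Specializing to $B=\kk$ and $B=\kappa(\qqq)$, one obtains canonical identifications
\[
M_{i,\nu}\otimes_A \kk \;\cong\; \HH^i(Y,\OO_Y(-\nu)) \quad\text{and}\quad M_{i,\nu}\otimes_A \kappa(\qqq) \;\cong\; \HH^i(Z,\OO_Z(-\nu)).
\]

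Because $\mathbb{Y}\subset \PP_A^r$ is projective over the Noetherian local ring $A$, each $M_{i,\nu}$ is a finitely generated $A$-module, and a finitely generated flat module over a local ring is free. Consequently,
\[
\dim_\kk \HH^i(Y,\OO_Y(-\nu)) \;=\; \rank_A\left(M_{i,\nu}\right) \;=\; \dim_{\kappa(\qqq)}\HH^i(Z,\OO_Z(-\nu))
\]
for every $i\ge 0$ and every $\nu\gg 0$. Moreover, flatness of $\mathbb{Y}$ over the connected scheme $\Spec(A)$ forces every fiber to share the same Hilbert polynomial, and in particular $\dim(Y)=\dim(Z)$.

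Both conclusions now follow directly from \autoref{lem_cohom_CM_assP}. For part (i), that lemma characterizes the Cohen-Macaulay equidimensional property by the vanishing of $\HH^i(\cdot,\OO(-\nu))$ for $i$ below the dimension of the scheme and $\nu\gg 0$; the termwise equality of cohomology dimensions displayed above immediately transfers this vanishing between $Y$ and $Z$, and $\dim(Y)=\dim(Z)$ matches the ranges of $i$. For part (ii), the presence of an associated prime of codimension $i$ in $I_Y$ is detected by the value of
\[
\underset{\nu\to\infty}{\lim}\, \frac{\dim_\kk \HH^{r-i}(Y,\OO_Y(-\nu))}{\nu^{r-i}/(r-i)!},
\]
and term-by-term equality of cohomology dimensions forces the corresponding limit for $Z$ to agree with it, so the criterion is satisfied on one side if and only if it is satisfied on the other.

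The main obstacle I anticipate is a bookkeeping matter rather than a substantive one: the proposition must be invoked separately for each $\nu$, and one needs a uniform lower bound $\nu_0$ beyond which the hypothesis holds for all cohomological degrees simultaneously. This is automatic since only the finitely many indices $i\in\{0,1,\ldots,r\}$ are relevant; the clause ``for all $i\ge 0$ and $\nu\gg 0$'' in the hypothesis provides exactly the required uniformity. A secondary point worth confirming is the upgrade from flatness to freeness of $M_{i,\nu}$, but this is immediate from Noetherian finiteness of coherent cohomology combined with the local hypothesis on $A$.
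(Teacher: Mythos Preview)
Your proof is correct and follows essentially the same approach as the paper: apply \autoref{prop_criterion_flat_cohom} to $\OO_{\mathbb{Y}}(-\nu)$ to obtain equality of cohomology dimensions over the two fibers, then invoke \autoref{lem_cohom_CM_assP}. You supply more detail than the paper does (the freeness argument for $M_{i,\nu}$ and the observation $\dim(Y)=\dim(Z)$), but the structure is identical.
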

\begin{proof}
	By \autoref{prop_criterion_flat_cohom}, we obtain the following equality 
	$$
	\dim_\kk\left(\HH^i(Y, \OO_{Y}(-\nu))\right) \; = \; \dim_{\kappa(\qqq)}\left(\HH^i(Z, \OO_{Z}(-\nu))\right) 
	$$
	for all $i \ge 0$ and $\nu \gg 0$.
	Then the statements of the theorem follow from \autoref{lem_cohom_CM_assP}.
\end{proof}

\subsection{Properties of multiplicity-free varieties}
Throughout this subsection we shall use the following setup.

\begin{setup}
	\label{setup_mult_free}
	Let $\kk$ be a field and $S = \kk\left[x_{i,j} \mid 1 \le i \le p, 0 \le j \le m_i\right]$ be a standard $\NN^p$-graded polynomial ring over $\kk$, such that $\multProj(S) = \PP = \PP_\kk^{m_1} \times_\kk \cdots \times_\kk \PP_\kk^{m_p}$.
	Let $X = \multProj(R) \subset \PP$ be an integral closed subscheme and $R$ be an $\NN^p$-graded quotient ring of $S$ that is a domain.
\end{setup}

Our next goal is to obtain an alternative proof of the following beautiful result.

\begin{theorem}[{Brion \cite{BRION_MULT_FREE}}]
	\label{thm_Brion_mult_free}
	Assume \autoref{setup_mult_free}.
	If $X \subset \PP$ is multiplicity-free, then: 
	\begin{enumerate}[\rm (i)]
		\item $X$ is arithmetically Cohen-Macaulay.
		\item $X$ is arithmetically normal.
		\item {\rm(}$\kk$ infinite{\rm)} There is a flat degeneration of $X$ to the following reduced union of multiprojective spaces
		$$
		H \;=\; \bigcup_{\bn = (n_1,\ldots,n_p) \,\in\, \msupp_\PP(X)} 	\PP_\kk^{n_1} \times_\kk \cdots \times_\kk \PP_\kk^{n_p}  \;\, \subset \;\, \PP = \PP_{\kk}^{m_1} \times_\kk \cdots \times_\kk \PP_{\kk}^{m_p}
		$$		
	\end{enumerate}
	where $\PP_\kk^{n_i} = \Proj\left(\kk[x_{i,m_i-n_i}, \ldots,x_{i,m_i}]\right) \subset \PP_{\kk}^{m_i} = \Proj\left(\kk[x_{i,0}, \ldots,x_{i,m_i}]\right)$ only uses the last $n_i+1$ coordinates.
\end{theorem}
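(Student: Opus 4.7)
My approach is to realize $H$ as a flat degeneration of $X$ and then transfer the favorable properties of $H$ to $X$ via the fiber-full criterion developed in \autoref{subsect_cohom_assP_degen}. First, by \autoref{thm_two_mdegs} and the multiplicity-free hypothesis, the multidegree polynomial of $X$ satisfies
$$
\mathcal{C}(S/P;\ttt)\,=\,\sum_{\bn \in \msupp_\PP(X)} \ttt^{\bm-\bn},
$$
where $\bm = (m_1,\ldots,m_p)$. Writing $P_{\mathbf{a}} = (x_{i,j} \mid 0 \le j < a_i)$ for the Borel-fixed prime of weight $\mathbf{a}$, the target $H$ has saturated ideal $I_H = \bigcap_{\bn \in \msupp_\PP(X)} P_{\bm-\bn}$. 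The support $\msupp_\PP(X)$ is a discrete polymatroid by \autoref{thm_pos_multdeg}, so the argument used in \autoref{thm_sqrt_gin_CM} (Alexander duality combined with polarization of polymatroidal ideals, which have linear resolutions) applies verbatim to show that $I_H$ is Cohen-Macaulay; by \autoref{lem_cohom_CM_assP}(ii) this gives $\HH^i(H, \OO_H(-\nu)) = 0$ for $0 < i < \dim(H)$ and $\nu \gg 0$.

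To establish (iii) I would prove the stronger statement $\gin_>(P) = I_H$ for a monomial order $>$ as in \autoref{setup_gin}, which yields the desired flat family via the usual Gr\"obner construction. The multiplicity-free hypothesis combined with \autoref{rem_mdeg_mult_min_primes} forces every minimal primary component of $\gin_>(P)$ to have multiplicity one, and hence to coincide with the Borel-fixed minimal prime $P_{\bm-\bn}$ that underlies it; therefore $\sqrt{\gin_>(P)} = I_H$ and $\gin_>(P) \subseteq I_H$. The main technical obstacle is to rule out embedded primary components of $\gin_>(P)$. My plan is to invoke \autoref{thm_fib_full_crit} on the one-parameter Gr\"obner family over $\kk[t]_{(t)}$ whose generic fiber is $X$ and whose special fiber is $V(\gin_>(P))$: the Cohen-Macaulayness of the reduced structure $H$ supplies enough cohomological vanishing that any embedded prime of $\gin_>(P)$ of codimension strictly greater than $\codim(P)$ would, via part (ii) of the criterion, force an associated prime of $X$ of the same codimension, contradicting the primality of $P$. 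Carrying this through rigorously requires controlling the cohomology of the possibly non-reduced scheme $V(\gin_>(P))$; I expect to do this by comparing $S/\gin_>(P)$ with $S/I_H$ through the short exact sequence $0 \to I_H/\gin_>(P) \to S/\gin_>(P) \to S/I_H \to 0$ and exploiting the Cohen-Macaulayness of $S/I_H$, and this is where most of the technical work lies.

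Granting $\gin_>(P) = I_H$, part (i) is immediate: Gr\"obner degeneration only decreases depth from generic to special fiber, so $\depth(S/P) \ge \depth(S/I_H) = \dim(S/P)$, and $S/P$ is Cohen-Macaulay (alternatively, apply \autoref{thm_fib_full_crit}(i) directly). For part (ii), Cohen-Macaulayness already supplies Serre's $S_2$, so only $R_1$ remains. Since $P$ is prime, the generic point of $X$ is regular after a harmless extension to a perfect field, and regularity in codimension one follows by localizing at each height-one prime of $S/P$ and applying \autoref{thm_fib_full_crit}(i) fiber by fiber, using that each irreducible component of $H$ is a smooth product of projective spaces. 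When $\kk$ is finite, parts (i) and (ii) are deduced by faithfully flat descent from the algebraic closure $\bar{\kk}$, where the preceding argument applies. The delicate step is again the $R_1$ transfer, which I expect to be the most subtle part of the proof after the equality $\gin_>(P) = I_H$.
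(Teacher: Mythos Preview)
Your overall architecture matches the paper's: identify $I_H=\sqrt{\gin_>(P)}$, show it is Cohen--Macaulay via the polymatroid argument, and then use the fiber-full criterion of \autoref{thm_fib_full_crit} to prove $\gin_>(P)=I_H$. But there is a genuine circularity in your plan for the key step. To apply \autoref{thm_fib_full_crit} to the Gr\"obner family you must verify the vanishing hypothesis on the \emph{special} fiber $Y=V(\gin_>(P))$. Your proposed route through the short exact sequence $0\to I_H/\gin_>(P)\to S/\gin_>(P)\to S/I_H\to 0$ only yields $\HH^i(Y,\OO_Y(-\nu))\cong \HH^i(Y,\mathfrak{K}(-\nu))$ for $1\le i<d$, where $\mathfrak{K}=(I_H/\gin_>(P))^\sim$; to get the required vanishing you would need $\mathfrak{K}$ to be supported in dimension $0$, which is essentially what you are trying to prove. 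The paper breaks this circle by an \emph{induction on $\dim(X)$} (\autoref{prop_eq_Hilb_poly}): cutting $X$ by a generic hyperplane in one factor reduces to a lower-dimensional multiplicity-free variety (via \autoref{rem_gen_Bertini} and \autoref{thm_dim_proj}), and the inductive equality of Hilbert polynomials shows $P_X(\ttt)-P_H(\ttt)$ is a constant. Only then is $\mathfrak{K}$ supported in dimension $0$, the hypothesis of \autoref{thm_fib_full_crit} is satisfied, and the embedded component of codimension $r$ is ruled out. This inductive reduction (together with the base case $\dim(X)=1$, handled by \autoref{lem_dim_one_mult_free}) is the main idea you are missing.

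Your argument for normality is also flawed. You propose to transfer $R_1$ from $H$ to $X$, but $H$ is \emph{not} $R_1$: adjacent components $\PP^{\bn}$ and $\PP^{\bn'}$ with $\bn'=\bn-\ee_i+\ee_j$ (which exist by the exchange property of the polymatroid $\msupp_\PP(X)$) meet in codimension one, so $H$ is singular there. Moreover, \autoref{thm_fib_full_crit}(i) transfers Cohen--Macaulayness, not regularity, and there is no ``fiber by fiber'' family indexed by height-one primes of $S/P$. The paper's proof of (ii) is entirely different: it writes the integral closure as $\overline{R}=R[w_1,\ldots,w_b]$, passes to a larger polynomial ring in which the analogue of $P$ is a multiplicity-free prime containing no variable, and then uses the already established radicality of initial ideals (\autoref{thm_props_CS}(i) together with \autoref{prop_sqr_free_primes}) to force a contradiction from any $w_l\notin R$.
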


The proof of this theorem is divided in several steps.
We first consider the case of one-dimensional multiplicity-free varieties.

\begin{lemma}
	\label{lem_dim_one_mult_free}
	Suppose that $\dim(X) = 1$ and that $X \subset \PP$ is multiplicity-free.
	Then $X$ is isomorphic to the image of a diagonal morphism $\PP_\kk^1 \rightarrow \left(\PP_\kk^1\right)^e \subset \PP$ for some $0 < e \le p$.
\end{lemma}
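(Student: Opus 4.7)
The plan is to use the projection formula to reduce to the case where every single-factor projection of $X$ is either birational onto a line or constant, and then conclude via the normalization of $X$.

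First I would translate the multiplicity-free hypothesis into concrete numerical data. Since $\dim(X) = 1$, \autoref{thm_two_mdegs} gives
\[
\mathcal{C}(X;\ttt) \;=\; \sum_{i=1}^{p} \deg_\PP^{\ee_i}(X)\, t_1^{m_1}\cdots t_i^{m_i-1}\cdots t_p^{m_p},
\]
so multiplicity-freeness forces each $\deg_\PP^{\ee_i}(X)\in\{0,1\}$. Set $J = \{i \in [p] : \deg_\PP^{\ee_i}(X) = 1\}$ and $e = |J|$; since $\mathcal{C}(X;\ttt) \ne 0$, we have $e \ge 1$.

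Next I would apply the projection-formula argument from the second proof of \autoref{lem_proj_same_dim} to the single-factor projection $\Pi_i : \PP \to \PP_\kk^{m_i}$ (iterating the lemma, or redoing the pushforward calculation directly, handles that the lemma is only stated for projecting out the last factor). This yields $\deg_\PP^{\ee_i}(X) = \deg(\Pi_i|_X)\cdot \deg_{\PP_\kk^{m_i}}\bigl(\Pi_i(X)\bigr)$ whenever $\dim\bigl(\Pi_i(X)\bigr) = 1$, and $\deg_\PP^{\ee_i}(X) = 0$ when $\dim\bigl(\Pi_i(X)\bigr) = 0$. Consequently, for $i \notin J$ the image $\Pi_i(X)$ is a closed point $p_i$; for $i \in J$ both factors on the right equal one, so $\Pi_i|_X$ is birational and $\Pi_i(X)\subset \PP_\kk^{m_i}$ is a one-dimensional integral subscheme of degree one, i.e.\ a projective line canonically identified with $\PP_\kk^1$. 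The product of the $\Pi_i$'s therefore factors $X$ through the closed subscheme $\prod_{i\notin J}\{p_i\}\times\prod_{i\in J}\Pi_i(X) \cong (\PP_\kk^1)^{e} \subset \PP$, and each of the $e$ coordinate projections is birational on $X$.

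For the final step I would pass to the normalization $\pi : \tilde X \to X$, a smooth integral projective curve. For any $i \in J$, the composition $\tilde X \to X \to \Pi_i(X) \cong \PP_\kk^1$ is a birational morphism between smooth projective curves, hence an isomorphism, so $\tilde X \cong \PP_\kk^1$ and each component of the induced morphism $\PP_\kk^1 \cong \tilde X \to (\PP_\kk^1)^{e}$ is an automorphism of $\PP_\kk^1$. After post-composing with the inverse automorphism in each factor (which is an automorphism of the ambient $(\PP_\kk^1)^e$), this morphism becomes the standard diagonal $t \mapsto (t,\ldots,t)$, which is a closed embedding. Because the composite $\tilde X \twoheadrightarrow X \hookrightarrow (\PP_\kk^1)^{e}\subset \PP$ is then a closed embedding and $X$ is reduced, $X$ equals the scheme-theoretic image of $\tilde X$, whence $X \cong \tilde X \cong \PP_\kk^1$ and $X$ is identified with the image of a diagonal morphism $\PP_\kk^1 \to (\PP_\kk^1)^e \subset \PP$.

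The main obstacle I foresee is ensuring that the intersection-theoretic identity $\deg_\PP^{\ee_i}(X) = \deg(\Pi_i|_X)\cdot \deg(\Pi_i(X))$ holds as stated for arbitrary single-factor projections; the stated lemma covers only the projection forgetting the last factor, so one must iterate or appeal directly to the projection formula. A secondary subtlety is the final identification of $X$ itself (as a possibly singular integral subscheme) with the diagonal image, but this reduces to a reducedness argument once the normalization is shown to produce a closed embedding into $(\PP_\kk^1)^e$.
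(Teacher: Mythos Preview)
Your proof is correct and takes a genuinely different route in the final step. Both arguments begin by embedding $X$ into $(\PP_\kk^1)^e$ after showing that each $\Pi_i(X)$ is either a point or a line onto which $X$ projects birationally; the paper obtains this from \autoref{cor_mult_free}, while you extract it directly from the projection formula of \autoref{lem_proj_same_dim}. From there the paper picks one projection $\alpha : X \to \PP_\kk^1$, writes $X$ as the closure of the graph of a rational map $\PP_\kk^1 \dashrightarrow (\PP_\kk^1)^{e-1}$, and invokes the degree formula of \cite[Corollary 3.9]{cid2021study} to force each coordinate polynomial to be linear. Your normalization argument is more self-contained: it only needs that a finite birational morphism to a normal scheme is an isomorphism. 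In fact it can be shortened further, since each $\Pi_i|_X : X \to \PP_\kk^1$ is already finite (a nonconstant morphism of projective integral curves is finite) and birational with normal target, hence an isomorphism --- so $X \cong \PP_\kk^1$ without ever passing to $\tilde X$. One minor imprecision: over an arbitrary field the normalization $\tilde X$ is regular but not a priori smooth; this is harmless because your argument only uses normality of the target $\PP_\kk^1$.
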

\begin{proof}
	The basic idea is to express $X$ as a blow-up (see \cite[Theorem II.7.17]{HARTSHORNE}).
	We have that $\Pi_i(X) \cong \Proj\left(R_{(i)}\right)$ with $R_{(i)} = \bigoplus_{k \ge 0} {\left[R\right]}_{k\cdot \ee_i}$ (see \autoref{nota_projections}) and the natural surjection $R_{(1)} \otimes_\kk \cdots \otimes_\kk R_{(p)} \surjects R$ of standard $\NN^p$-graded $\kk$-algebras, and so as a consequence we obtain that the natural morphism $X \rightarrow \Pi_1(X) \times_\kk \cdots \times_\kk \Pi_p(X)$ is a closed immersion.
	By \autoref{cor_mult_free}, for each $1 \le i \le p$, we have that $\Pi_i(X)$ equals either $\PP_\kk^1$ or $\Spec(\kk)$.
	Since  $X \hookrightarrow \Pi_1(X) \times_\kk \cdots \times_\kk \Pi_p(X)$ is a closed immersion, we may assume that $X \subset \left(\PP_\kk^1\right)^e \subset \PP$ for some $0 < e \le p$.
	Let $\alpha : X \subset \left(\PP_\kk^1\right)^e \rightarrow \PP_\kk^1$ be the projection into the last component and $\beta : X \subset \left(\PP_\kk^1\right)^e \rightarrow \left(\PP_\kk^1\right)^{e-1}$ into the first $e-1$ components.
	Due to \autoref{lem_proj_same_dim}, $\alpha$ is a birational morphism, and so there exists a dense open subset $\mathcal{U} \subset \PP_\kk^1$ such that $\alpha_{|\mathcal{V}} : \mathcal{V} = \alpha^{-1}(\mathcal{U}) \rightarrow \mathcal{U}$ is an isomorphism.
	Then the composition $\beta \circ \alpha_{|\mathcal{V}}^{-1}$ gives a morphism $\mathcal{U} \rightarrow \left(\PP_\kk^1\right)^{e-1}$ and $X$ can be realized as its graph.
	That is, we have the following commutative diagram:
		\begin{equation*}		
		\begin{tikzpicture}[baseline=(current  bounding  box.center)]
			\matrix (m) [matrix of math nodes,row sep=2.3em,column sep=5em,minimum width=2em, text height=1.5ex, text depth=0.25ex]
			{
				X \subset \left(\PP_\kk^1\right)^e &    \\
				\PP_\kk^1 & \left(\PP_\kk^1\right)^{e-1}. \\
			};
			\path[-stealth]
			(m-1-1) edge node [left]  {$\alpha$} (m-2-1)
			(m-1-1) edge node [above]  {$\beta$} (m-2-2)
			(m-2-1) edge node [above]  {$\mathcal{F}$} (m-2-2)
			;
		\end{tikzpicture}	
	\end{equation*}
	Set $\PP_\kk^1 = \Proj(B)$ with $B = \kk[z_0,z_1]$.
	The morphism $\mathcal{F}$ is given by $e-1$ pairs of homogeneous polynomials $\{f_{i,0}, f_{i,1}\} \subset B$ with $\delta_i = \deg(f_{i,0}) = \deg(f_{i,1})$ and $\gcd(f_{i,0}, f_{i,1}) = 1$.
	Let $Y \subset \PP' = \left(\PP_\kk^1\right)^{e-1}$ be the image of $\FF$.
	As a consequence of \autoref{cor_mult_free}, $Y$ is multiplicity-free.
	The degree formula of \cite[Corollary 3.9]{cid2021study} yields the equalities $\delta_i = \deg_{\PP'}^{\ee_i'}(Y) \deg(\mathcal{F}) = 1$ for all $1 \le i \le e-1$, where $\ee_i' \in \NN^{e-1}$ denotes the standard basis vector.
	Therefore, up to a $\kk$-linear change of coordinates we may assume that $f_{i,0} = z_0$ and $f_{i,1} = z_1$.
	This shows that $X$ is isomorphic to the image of the diagonal morphism $\PP_\kk^1 \rightarrow \left(\PP_\kk^1\right)^e \subset \PP$.
\end{proof}

The next proposition deals with the Hilbert polynomial of $X$.
This is the main step in the proof of \autoref{thm_Brion_mult_free}, and it is the place where we utilize the tools developed in \autoref{subsect_cohom_assP_degen}. 
		
\begin{proposition}
	\label{prop_eq_Hilb_poly}
	Under the notation and assumptions of \autoref{thm_Brion_mult_free}, we have $P_X(\ttt) = P_H(\ttt)$.
\end{proposition}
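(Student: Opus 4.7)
The plan is to prove $P_X(\ttt) = P_H(\ttt)$ by induction on $r = \dim(X)$. The base case $r = 0$ is immediate: both $X$ and $H$ reduce to a single reduced $\kk$-point, so $P_X = P_H = 1$. For the inductive step, I would first note that the top-degree parts of $P_X$ and $P_H$ (as polynomials in $\ttt$) coincide. By \autoref{thm_two_mdegs} and the multiplicity-free hypothesis, both are encoded by
$$
\mathcal{C}(X;\ttt) \,=\, \mathcal{C}(H;\ttt) \,=\, \sum_{\bn \in \msupp_\PP(X)} t_1^{m_1-n_1}\cdots t_p^{m_p-n_p},
$$
where for $H$ one uses additivity of multidegrees (\autoref{rem_multdeg_facts}) over the components $C_\bn = \PP_\kk^{n_1}\times_\kk\cdots\times_\kk\PP_\kk^{n_p}$. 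Hence $Q := P_X - P_H$ has total degree less than $r$.

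Next, I would derive a difference equation for $Q$. After possibly restricting to a smaller multiprojective ambient space (using the closed immersion $X \hookrightarrow \Pi_1(X)\times_\kk\cdots\times_\kk\Pi_p(X)$ together with linear coordinate changes sending any $0$-dimensional projection to $[0{:}\cdots{:}0{:}1]$), I may assume $\dim \Pi_i(X) \geq 1$ for every $i \in [p]$. For each such $i$, following \autoref{nota_generic}, take the generic linear form $y \in [S_\LL]_{\ee_i}$ over $\LL = \kk(z_0,\ldots,z_\ell)$. By \autoref{rem_gen_Bertini} and \autoref{thm_dim_proj}, $X_\LL \cap V(y)$ is integral and multiplicity-free, with $\msupp_\PP(X_\LL \cap V(y)) = \{\bn - \ee_i : \bn \in \msupp_\PP(X),\, n_i \geq 1\}$. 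A linear coordinate change in the $i$-th factor identifies $H_\LL \cap V(y)$ with $H(X_\LL \cap V(y))$: each component $C_\bn$ with $n_i \geq 1$ has $\PP_\LL^{n_i} \cap V(y) \cong \PP_\LL^{n_i-1}$, while components with $n_i = 0$ are disjoint from $V(y)$. Since $y$ is a non-zero-divisor on both $R_\LL$ and $S_\LL/I_{H_\LL}$ (no associated prime of either contains a generic form), short exact sequences yield
$$
P_X(\ttt) - P_X(\ttt - \ee_i) \,=\, P_{X_\LL \cap V(y)}(\ttt), \qquad P_H(\ttt) - P_H(\ttt - \ee_i) \,=\, P_{H_\LL \cap V(y)}(\ttt).
$$
By the inductive hypothesis the right-hand sides are equal, so $Q(\ttt) = Q(\ttt - \ee_i)$ for every $i \in [p]$; combined with $\deg Q < r$, this forces $Q$ to be a constant.

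To conclude $Q = 0$, I would invoke the arithmetic Cohen-Macaulayness of $H$: the ideal $I_H$ is (after polarization) a polymatroidal monomial ideal, so by the argument of \autoref{thm_sqrt_gin_CM} the ring $S/I_H$ is Cohen-Macaulay, whence $P_H(\mathbf{0}) = h_H(\mathbf{0}) = 1$; matched with a suitable evaluation of $P_X$ (either by iterating the difference equation down to the $0$-dimensional case corresponding to some $\bn \in \msupp_\PP(X)$, or by a direct Hilbert-series computation at the constant term level), this gives $Q \equiv 0$. The hard part will be the geometric identification $H_\LL \cap V(y) \cong H(X_\LL \cap V(y))$, which requires a careful Bertini-type analysis compatible with a linear coordinate change in the $i$-th factor (with the $n_i = 0$ components requiring separate treatment); pinning down the final constant in turn relies on the independently-established Cohen-Macaulayness of $H$.
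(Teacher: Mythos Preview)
Your inductive strategy via generic hyperplane sections closely parallels the paper's, and the identification $H_\LL \cap V(y) \cong H(X_\LL \cap V(y))$ is correct. The genuine gap is in the final step: pinning down the constant $c = P_X - P_H$. Neither of your two suggestions works. Iterating the difference equation $\Delta_i P_X = P_{X \cap V(y)}$ recovers all coefficients $e(\bn)$ with $\bn \neq \mathbf{0}$, but the constant term is invisible to any sequence of differences---this is precisely why you could only conclude that $Q$ is constant, not that it vanishes. A ``Hilbert-series computation at the constant term level'' is circular: the Hilbert function at $\mathbf{0}$ is always $1$, but the equality $P_X(\mathbf{0}) = h_X(\mathbf{0})$ would require the arithmetic Cohen--Macaulayness of $X$, which is part (i) of \autoref{thm_Brion_mult_free} and is deduced \emph{from} the present proposition. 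Equivalently, $P_X(\mathbf{0}) = \chi(\OO_X)$, and there is no a priori reason for the higher cohomology of $\OO_X$ to vanish.

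The paper's resolution of this step is substantially different and is the reason \autoref{subsect_cohom_assP_degen} was developed. One first passes to the Gr\"obner degeneration $G = \multProj(S_\LL/\gin(P_\LL))$, so that $P_X = P_G$ and $c = P_{(J/I)^\sim}$ with $J = \sqrt{\gin(P_\LL)}$; thus $c$ is the length of a zero-dimensional sheaf $\mathfrak{K}$ on $G$. The vanishing $\mathfrak{K} = 0$ is then forced by \autoref{thm_fib_full_crit}: the generic fiber $X_\LL$ is integral and hence has no associated prime of maximal codimension, and the cohomological criterion there (which holds because $H_\LL$ is Cohen--Macaulay by \autoref{thm_sqrt_gin_CM} and $\dim X \geq 2$) transports this to the special fiber $G$. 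This argument genuinely requires $\dim X \geq 2$, which is why the paper handles $\dim X = 1$ as a separate base case via \autoref{lem_dim_one_mult_free}, explicitly identifying $X$ with a diagonal $\PP_\kk^1$---a base case your proposal also lacks.
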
	
\begin{proof}
	We proceed by induction on $\dim(X)$.
	
	If $\dim(X) = 0$, then the result is clear since we have $P_X(\ttt) = 1 = P_H(\ttt)$.
	
	If $\dim(X) = 1$, then $X$ is isomorphic to the image of a diagonal morphism $\PP_\kk^1 \rightarrow \left(\PP_\kk^1\right)^e \subset \PP$ by \autoref{lem_dim_one_mult_free}.
	Thus, we get $P_X(\ttt) = 1 + \sum_{ i \,\mid\, \ee_i \in \msupp_{\PP}(X)} t_i =  P_H(\ttt)$.
	
	Suppose that $d = \dim(X) \ge 2$.
	Here we use the conventions of \autoref{nota_generic}.	
	Write $R = S/P$ and notice that $P_\LL := P \otimes_\kk \LL \subset S_\LL$ is again a prime ideal (see \autoref{rem_infinite_field}).
	One way of relating $X$ and $H$ is by using the multigraded generic initial ideal (see \autoref{sect_multdeg_primes}).
	Let $>$ be a monomial order on $S$ that satisfies $y_0 > \cdots > y_\ell$, where $y_j = x_{p,j}$.
	Let $I = \gin(P_\LL) \subset S_\LL$.
	Let $P_1,\ldots,P_h$ be the minimal primes of $I$ and $Q_1,\ldots,Q_h$ be the corresponding minimal primary components.
	Since $X$ is assumed to be multiplicity-free, \autoref{rem_mdeg_mult_min_primes} yields that $Q_k=P_k$ for all $1 \le k \le h$.
	Set $J = Q_1 \cap \cdots \cap Q_h = \sqrt{I} \subset S_\LL$.
	By construction we have $H_\LL := H \otimes_\kk \LL \cong \multProj(S_\LL/J)$.
	Let $G := \multProj(S_\LL/I)$.
	There exists a one-parameter flat family $\mathfrak{X} \subset \PP_\LL \times_\LL \bbA_\LL^1$ such that $X_\LL$ is the general fiber and $G$ is the special fiber (see \cite[\S 15.8]{EISEN_COMM}).
	Then we write
	$$
	P_X(\ttt) \,=\, P_G(\ttt) \,=\, \sum_{\substack{\bn \in \NN^p \\ |\bn| \le d}} e(\bn) \binom{t_1+n_1}{n_1}\cdots \binom{t_p+n_p}{n_p}
	$$
	and 
	$$
	P_H(\ttt) \,=\, \sum_{\substack{\bn \in \NN^p \\ |\bn| \le d}} h(\bn) \binom{t_1+n_1}{n_1}\cdots \binom{t_p+n_p}{n_p}.
	$$
	Since $G_{\text{red}} = H_\LL$, it follows that $\dim\left(\Pi_\fJ(G)\right)  = \dim\left(\Pi_\fJ(H)\right)$ for all $\fJ \subset [p]$.
	From \cite[Proposition 3.1]{POSITIVITY}, we have  that $\dim\left(\Pi_i(X)\right)$ equals the degree of $P_X(\ttt)$ in the variable $t_i$; and the same holds for $G$ and $H$.
	Suppose that $\dim\left(\Pi_p(X)\right) \ge 1$.
	We have the equalities 
	$$
	P_{X_\LL \cap V(y)}(\ttt) \,=\, P_{G \cap V(y)}(\ttt) \,=\, \sum_{\substack{\bn \in \NN^p \\ |\bn| \le d, n_p \ge 1}} e(\bn) \binom{t_1+n_1}{n_1}\cdots\binom{t_{p-1}+n_{p-1}}{n_{p-1}} \binom{t_p+n_p-1}{n_p-1}
	$$
	and 
	$$
	P_{H_\LL \cap V(y)}(\ttt) \,=\, \sum_{\substack{\bn \in \NN^p \\ |\bn| \le d, n_p \ge 1}} h(\bn) \binom{t_1+n_1}{n_1}\cdots\binom{t_{p-1}+n_{p-1}}{n_{p-1}} \binom{t_p+n_p-1}{n_p-1}.
	$$
	As no monomial of $J$ involves the variable $y_\ell$, we obtain $J + (y) = \bigcap_{k =1}^h \left(Q_k + (y)\right)$.
	This implies that 
	$$
	H_\LL \cap V(y) \;=\; \bigcup_{\substack{\bn = (n_1,\ldots,n_p) \,\in\, \msupp_\PP(X)\\ n_p \ge 1}} 	\PP_\LL^{n_1} \times_\LL \cdots \times_\LL \PP_\LL^{n_{p-1}} \times_\LL \PP_\LL^{n_p-1}  \; \subset \; \PP_\LL.	
	$$
	On the other hand, $X_\LL \cap V(y)$ is an integral scheme, and \autoref{thm_dim_proj} and \cite[Theorem A]{POSITIVITY} yield that $\msupp_{\PP_\LL}\left(X_\LL \cap V(y)\right) = \lbrace \bn - \ee_p \mid  \bn \in \msupp_\PP(X) \text{ and } n_p \ge 1 \rbrace$.
	Therefore, the inductive hypothesis gives the equality $P_{X_\LL \cap V(y)}(\ttt) = P_{H_\LL \cap V(y)}(\ttt)$, and so we obtain that $e(\bn) = h(\bn)$ for all $n_p \ge 1$.
	
	For $1 \le i < p$, we can repeat the above arguments and consider the $i$-th component of $\PP = \PP_\kk^{m_1}\times_\kk \cdots \times_\kk \PP_\kk^{m_p}$.
	So, we conclude that $e(\bn) = h(\bn)$ for all $\bn \neq \mathbf{0}$.
	Thus $c = P_X(\ttt) - P_H(\ttt)$ is a constant polynomial.
	
	We have a short exact sequence $0 \rightarrow J/I \rightarrow S_\LL/I \rightarrow S_\LL/J \rightarrow 0$. 
	Hence $P_{J/I}(\ttt) = c$ and the corresponding coherent sheaf $\mathfrak{K} = (J/I)^\sim$ on $G$ is supported on dimension zero.
	By \autoref{thm_sqrt_gin_CM}, $S_\LL/J$ is Cohen-Macaulay, and thus $H_\LL$ is Cohen-Macaulay.
	
	Let $\mathfrak{s} : \PP_\LL = \PP_\LL^{m_1} \times_\LL \cdots \times_\LL \PP_\LL^{m_p} \rightarrow \PP_\LL^r$ be the Segre embedding, where $r = (m_1+1)\cdots(m_p+1)-1$.
	Let $Z = \mathfrak{s}(X_\LL)$, $Y = \mathfrak{s}(G)$ and $Y' = \mathfrak{s}(H_\LL)$.
	Let $\mathfrak{S} = \mathfrak{s} \times_\LL \bbA_\LL^1 : \PP_\LL \times_\LL \bbA_\LL^1 \rightarrow \PP_\LL^r \times_\LL \bbA_\LL^1$, and notice that $\mathbb{Y} = \mathfrak{S}(\mathfrak{X}) \subset \PP_\LL^r \times_\LL \bbA_\LL^1$ is a one-parameter flat family with general fiber $Z \subset \PP_\LL^r$ and special fiber $Y \subset \PP_\LL^r$.
	We get a short exact sequence $0 \rightarrow \mathfrak{s}_*\mathfrak{K} \rightarrow \OO_Y \rightarrow \OO_{Y'} \rightarrow 0$.
	As $\mathfrak{s}_*\mathfrak{K}$ is supported on dimension zero, we have $\HH^i(Y, (\mathfrak{
	s}_*\mathfrak{K})(\nu)) = 0$ for all $i \ge 1, \nu \in \ZZ$.
	On the other hand, \autoref{lem_cohom_CM_assP}(ii) yields $\HH^i(Y', \OO_{Y'}(-\nu)) = 0$ for all $i<d, \nu \gg 0$.
	From the induced long exact sequence in cohomology
	$$
	\cdots \,\rightarrow\, \HH^{i}(Y, (\mathfrak{
		s}_*\mathfrak{K})(-\nu)) \,\rightarrow\, \HH^i(Y, \OO_Y(-\nu)) \,\rightarrow\, \HH^i(Y', \OO_{Y'}(-\nu)) \,\rightarrow\, \cdots
	$$
	we obtain that $\HH^i(Y, \OO_{Y}(-\nu)) = 0$ for all $1\le i<d, \nu \gg 0$.
	Since $\dim(Y) = d \ge 2$, it follows that the condition of \autoref{thm_fib_full_crit} holds for our current $Y$.
	The ideal $I_Z$ does not have an associated prime of codimension $r$ because $Z$ is an integral scheme, and so \autoref{thm_fib_full_crit}(ii) implies that $I_Y$ does not have an associated prime of codimension $r$.
	Therefore $\mathfrak{s}_*\mathfrak{K} = 0$ and $Y = Y'$.
	
	Finally, we have shown that $c = P_X(\ttt) - P_H(\ttt) = 0$, and this concludes the proof of the proposition. 
\end{proof}

We can now complete our proof of Brion's result.

\begin{proof}[Proof of \autoref{thm_Brion_mult_free}]
	Let $P \subset S$ be a prime ideal with $X = \multProj(S/P)$.
	Let $>$ be a monomial order on $S$ and assume that $\kk$ is infinite (see \autoref{rem_infinite_field}).
	As in \autoref{rem_relev_prim} and the beginning of the proof of \autoref{thm_proj_gin}, we may adjoin new variables to $S$ in such a way that all the associated primes of $\gin(P)$ are relevant.
	Let $I = \gin(P)$ and $J = \sqrt{\gin(P)}$.
	Since $X$ is multiplicity-free, \autoref{rem_mdeg_mult_min_primes} implies that $J$ is the intersection of the minimal primary components of $I$.
	By \autoref{prop_eq_Hilb_poly}, we have the equalities $P_{S/I}(\ttt) = P_{S/J}(\ttt) = P_{S/P}(\ttt)$, and since all the associated primes of $I$ are assumed to be relevant, we obtain $I = J$.
	
	The result of part (iii) now follows at once (see \cite[\S 15.8]{EISEN_COMM}).
	Moreover, we have proved that the multigraded Hilbert function of $S/P$ equals the one of $S/J$.
	From \autoref{thm_sqrt_gin_CM} we have that $S/J$ is Cohen-Macaulay and as a consequence $S/P$ also is.
	This settles part (i).
	
	Let $\overline{R}$ be the integral closure of $R=S/P$ in its field of fractions $\Quot(R)$.
	We have that $\overline{R}$ is a finitely generated $\NN^p$-graded $R$-module (see \cite[\S 2.3]{SwHu}).
	Write $\overline{R} = R[w_1,\ldots,w_b] \subset \Quot(R)$ where $w_1,\ldots,w_b$ is a minimal set of homogeneous generators of $\overline{R}$ as an $R$-algebra. Suppose by contradiction that each $w_l \not\in R$.
	Let $T = S[z_1,\ldots,z_b]$ be an $\NN^p$-graded polynomial ring over $S$ with a surjective $S$-algebra homomorphism $\psi : T \twoheadrightarrow \overline{R}, \, z_l \rightarrow w_l$.
	Since $\rank_R\left(\overline{R}\right)=1$, the associativity formula for mixed multiplicities yields the equality $e(\bn; \overline{R}) = e(\bn; R)$ for all $\bn \in \NN^p$ with $|\bn| = \dim(X)$.
	Hence we consider the multiplicity-free prime ideal $Q = \Ker\left(\psi\right) \subset T$.
	Choose a lexicographical monomial order $>'$ on $T$ such that $z_l >' x_{i,j}$ for all $1 \le l \le b, 1 \le i \le p, 0 \le j \le m_i$.
	The assumption $w_l \not\in R$ would imply that a monomial of the form $z_l^{g_l}$ with $g_l \ge 2$ is among the minimal generators of $\init_{>'}(Q)$. 
	However, this would contradict the fact that $\init_{>'}(Q)$ is radical by \autoref{thm_props_CS}(i) and \autoref{prop_sqr_free_primes}; notice that the proof of \autoref{prop_sqr_free_primes} only depends on the fact that $\gin(P)$ is radical, which we already proved.
	So, $R$ is a normal domain and the proof of the remaining part (ii) is complete.
\end{proof}

Finally, we restate Brion's theorem in a more algebraic language.

\begin{theorem}
	\label{thm_Brion}
	Assume \autoref{setup_mult_free}. 
	If $P \subset S$ is an $S$-homogeneous multiplicity-free prime ideal, then: \begin{enumerate}[\rm (i)]
		\item $S/P$ is Cohen-Macaulay.
		\item $S/P$ is a normal domain.
		\item {\rm(}$\kk$ infinite{\rm)} $\gin_>(P)$ is a radical monomial ideal for any monomial order $>$ on $S$. 
	\end{enumerate}
\end{theorem}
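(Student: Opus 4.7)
The plan is to recognize that \autoref{thm_Brion} is the algebraic repackaging of \autoref{thm_Brion_mult_free}, with the geometric flat degeneration replaced by the Gr\"obner degeneration to $\gin_>(P)$. The hypothesis that $P$ is multiplicity-free translates (via \autoref{thm_two_mdegs}) to the multiplicity-freeness of the variety $X = \multProj(S/P)$, possibly after adjoining new variables as in \autoref{rem_relev_prim} to make $P$ relevant. This variable-adjunction step preserves Betti numbers, Hilbert series, multidegrees, and initial ideals in the appropriate sense, so nothing is lost in reducing to the relevant case.

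First I would establish (iii), which is the heart of the argument. Following the reduction used at the start of the proof of \autoref{thm_proj_gin}, I adjoin further variables so that every associated prime of $\gin_>(P)$ becomes relevant (this does not affect $\gin_>(P)$ itself). Writing $I = \gin_>(P)$ and $J = \sqrt{I}$, with minimal primes $P_1,\ldots,P_h$ and minimal primary components $Q_1,\ldots,Q_h$, \autoref{rem_mdeg_mult_min_primes} expresses the coefficients of $\mathcal{C}(S/P;\ttt)$ as the multiplicities $\text{length}_{S_{P_k}}((S/Q_k)_{P_k})$. Since these coefficients are all in $\{0,1\}$, each $Q_k$ equals $P_k$, and hence $J = Q_1\cap\cdots\cap Q_h$ is the intersection of the minimal primary components of $I$. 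The key input \autoref{prop_eq_Hilb_poly} then gives the Hilbert polynomial equality $P_{S/I}(\ttt) = P_{S/J}(\ttt)$; because every associated prime of $I$ has been made relevant, this equality rules out the existence of any embedded component, forcing $I = J$.

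Once (iii) is in hand, part (i) is immediate: by \autoref{thm_sqrt_gin_CM}, $S/\sqrt{\gin_>(P)} = S/\gin_>(P)$ is Cohen-Macaulay, and since depth cannot decrease when passing from an initial ideal back to the original ideal, $S/P$ is Cohen-Macaulay as well. For part (ii), I would follow the normality argument from the proof of \autoref{thm_Brion_mult_free}: let $\overline{R} = R[w_1,\ldots,w_b]$ be the integral closure of $R=S/P$ with $w_1,\ldots,w_b$ a minimal set of homogeneous generators, and suppose by contradiction that some $w_l \notin R$. Set $T = S[z_1,\ldots,z_b]$ with the surjection $\psi:T \twoheadrightarrow \overline{R}$, $z_l \mapsto w_l$. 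Since $\rank_R(\overline{R}) = 1$, the associativity formula for mixed multiplicities gives $e(\bn;\overline{R}) = e(\bn;R)$, so $Q = \ker(\psi)$ is a multiplicity-free prime. Choosing a lex order $>'$ on $T$ with the $z_l$'s largest, the assumption $w_l \notin R$ forces some $z_l^{g_l}$ with $g_l \ge 2$ to be a minimal generator of $\init_{>'}(Q)$, contradicting the radicality of $\init_{>'}(Q)$ provided by \autoref{thm_props_CS}(i) together with \autoref{prop_sqr_free_primes} (whose proof only requires $\gin(Q)$ to be radical, which we have just established for multiplicity-free primes).

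The main obstacle is not mathematical depth but careful bookkeeping: one must verify that the repeated adjunction of variables used to pass to the relevant case in each part does not spoil the conclusions stated for the original $P$ (in particular, that radicality of $\gin_>$ is insensitive to such extensions, and that $\init_{>'}(Q)$ inherits radicality from the corresponding Cartwright-Sturmfels framework even though $Q$ lives in the larger ring $T$ rather than in $S$).
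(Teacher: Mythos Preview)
Your proposal is correct and follows essentially the same route as the paper: the paper's proof of \autoref{thm_Brion} simply says it follows from the proof of \autoref{thm_Brion_mult_free} after making $P$ relevant, and the argument you reconstruct (adjoin variables so all associated primes of $\gin_>(P)$ are relevant, use \autoref{rem_mdeg_mult_min_primes} and \autoref{prop_eq_Hilb_poly} to force $\gin_>(P)=\sqrt{\gin_>(P)}$, then deduce Cohen--Macaulayness from \autoref{thm_sqrt_gin_CM} and normality via the integral-closure contradiction) is exactly that proof spelled out. Your bookkeeping caveat about variable adjunction is also the only real thing to check, and the paper handles it the same way you indicate.
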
	
\begin{proof}
	This follows from the proof of \autoref{thm_Brion_mult_free} as we can always make $P$ a relevant ideal.
\end{proof}

\section{Standardization of ideals}
	\label{sec:standardization}
	In this section, we develop a process of standardization of ideals in an \emph{arbitrary positive (possibly non-standard) multigrading}. 
	Here we use a mixture of the techniques \emph{step-by-step homogenization} introduced in \cite{McCULLOUGH_PEEVA} for singly-graded settings and \emph{standardization} introduced in \cite{DOUBLE_SCHUBERT} for certain multigraded settings.
	The following setup and construction is used throughout this section.
	
	\begin{setup}
		\label{setup_standardization}
		Let $\kk$ be a field and $R = \kk[x_1,\ldots,x_n]$ be a positively $\NN^p$-graded polynomial ring.
		For $1 \le i \le n$, let $\ell_i = |\deg(x_i)|$ be the total degree of the variable $x_i$.
		Let
		$$
		S \;= \; \kk\left[y_{i,j} \mid 1 \le i \le n \text{ and } 1 \le j \le \ell_i\right]
		$$
		be a standard $\NN^p$-graded polynomial ring such that
		$$
		\deg(x_i) \;=\; \sum_{j = 1}^{\ell_i} \deg(y_{i,j})   \quad  \text{for all \;\;$1 \le i \le n$.}
		$$
		We define the $\NN^p$-graded $\kk$-algebra homomorphism 
		\begin{equation*}
			\phi: R=\kk[\xx] \longrightarrow S = \kk[\mathbf{y}], \quad
			\phi(x_{i}) = y_{i,1}y_{i,2}\cdots y_{i,\ell_i}.
		\end{equation*}
		For an $R$-homogeneous ideal $I \subset R$, we say that the extension  $J = \phi(I) S$ is the \emph{standardization} of $I$, as $J \subset S$ is an $S$-homogeneous ideal in the standard $\NN^p$-graded polynomial ring $S$.
		Let $\ttt = \{t_1,\ldots,t_p\}$ be variables indexing the $\ZZ^p$-grading, where $t_i$ corresponds with $\ee_i  \in \ZZ^p$.
		Given a finitely generated $\ZZ^p$-graded $R$-module $M$ and a finitely generated $\ZZ^p$-graded $S$-module $N$, by a slight abuse of notation, we consider both multidegree polynomials $\mathcal{C}(M;\ttt)$ and $\mathcal{C}(N;\ttt)$ as elements of the same polynomial ring $\ZZ[\ttt]=\ZZ[t_1,\ldots,t_p]$.
	\end{setup}
	
	The following theorem contains some of the basic and desirable properties that the standardization process  satisfies.
	
	\begin{theorem}
		\label{thm_std}
		Assume \autoref{setup_standardization}.
		Let $I \subset R$ be an $R$-homogeneous ideal and $J = \phi(I)S$ be its standardization. 
		Then the following statements hold:
		\begin{enumerate}[\rm (i)]
			\item $\codim(I) = \codim(J)$.
			\item $I \subset R$ and $J \subset S$ have the same $\NN^p$-graded Betti numbers. 
			\item $\mathcal{K}(R/I;\ttt) = \mathcal{K}(S/J;\ttt)$ and  $\mathcal{C}(R/I;\ttt) = \mathcal{C}(S/J;\ttt)$.
			\item $R/I$ is a Cohen-Macaulay ring if and only if $S/J$ is a Cohen-Macaulay ring.
			\item Let $>$ be a monomial order on $R$ and $>'$ be a monomial order on $S$ which is compatible with $\phi$ {\rm(}i.e.,~if $f,g \in R$ with $f > g$, then $\phi(f) >' \phi(g)${\rm)}.
			Then $\init_{>'}(J) = \phi(\init_{>}(I))S$.
			\item If $I \subset R$ is a prime ideal and it does not contain any variable, then $J \subset S$ is also a prime ideal.
		\end{enumerate}
	\end{theorem}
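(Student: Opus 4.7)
The strategy is to exploit a flatness property of the standardization map and reduce the primality of $J = \phi(I)S$ to the primality of an explicit generic fiber.

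First, I would verify that $\phi : R \to S$ is flat. Setting $R_i = \kk[x_i]$ and $S_i = \kk[y_{i,1}, \ldots, y_{i,\ell_i}]$, we have $R = \bigotimes_{\kk,\, i} R_i$, $S = \bigotimes_{\kk,\, i} S_i$, and $\phi$ is the tensor product of the inclusions $\phi_i : R_i \hookrightarrow S_i$ defined by $x_i \mapsto y_{i,1}\cdots y_{i,\ell_i}$. Each $S_i$ is a free $R_i$-module with basis $\{y_{i,1}^{a_1}\cdots y_{i,\ell_i}^{a_{\ell_i}} : \min_k a_k = 0\}$, since every monomial in $S_i$ can be reduced to such a form by factoring out powers of $y_{i,1}\cdots y_{i,\ell_i} = \phi(x_i)$. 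Taking $\kk$-tensor products of these bases shows that $S$ is a free (hence flat) $R$-module, and $\phi$ is injective because the elements $\phi(x_i)$ involve pairwise disjoint sets of variables.

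Since $I$ is prime, $R/I$ is a domain with fraction field $Q := \Quot(R/I)$, and the hypothesis that $I$ contains no variable ensures that the image $\bar x_i$ of $x_i$ in $Q$ is nonzero for every $i$. Tensoring the inclusion $R/I \hookrightarrow Q$ with the flat $R$-module $S$ yields an injection
\[
S/J \,=\, S \otimes_R R/I \,\hookrightarrow\, S \otimes_R Q,
\]
so it suffices to prove that the generic fiber $S \otimes_R Q$ is a domain. Using the presentation $S \cong R[y_{i,j}]/(y_{i,1}\cdots y_{i,\ell_i} - x_i : 1 \le i \le n)$ as an $R$-algebra, I would identify
\[
S \otimes_R Q \;\cong\; \bigotimes_{Q,\, i=1}^{n} Q[y_{i,1}, \ldots, y_{i,\ell_i}]/(y_{i,1}\cdots y_{i,\ell_i} - \bar x_i).
\]
In each factor the relation $y_{i,1}\cdots y_{i,\ell_i} = \bar x_i$ with $\bar x_i \in Q \setminus \{0\}$ makes every $y_{i,j}$ a unit, with inverse $\bar x_i^{-1} \prod_{k \neq j} y_{i,k}$; eliminating one of the variables then exhibits the factor as the Laurent polynomial ring $Q[y_{i,1}^{\pm 1}, \ldots, y_{i,\ell_i - 1}^{\pm 1}]$. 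The $Q$-tensor product of these Laurent rings is again a Laurent polynomial ring over $Q$, hence a domain, so $S/J$ embeds into a domain and is itself a domain, i.e., $J$ is prime.

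The substantive step is the flatness of $\phi$, which comes down to the freeness of each $S_i$ over $R_i$; once this is in place, the hypothesis $x_i \notin I$ is precisely what makes each binomial relation $y_{i,1}\cdots y_{i,\ell_i} - \bar x_i$ in the generic fiber become invertible, and the rest reduces to the fact that Laurent polynomial extensions of fields are domains. I do not anticipate any serious obstacle beyond this bookkeeping.
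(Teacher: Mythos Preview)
Your proposal addresses only part~(vi), and for that part it is correct but takes a different route from the paper. You show that $S$ is free over $R$ via the monomial basis $\{\prod_{i,j} y_{i,j}^{a_{i,j}} : \min_j a_{i,j} = 0 \text{ for each } i\}$, embed $S/J = S\otimes_R R/I$ into the generic fiber $S \otimes_R Q$, and identify that fiber as a Laurent polynomial ring over $Q = \Quot(R/I)$. The paper instead works in the ambient ring $T = R \otimes_\kk S$: starting from the domain $T/IT \cong (R/I)[y_{i,j}]$, it checks that $\{y_{i,2}\cdots y_{i,\ell_i},\, x_i\}$ is a regular sequence there and invokes \cite[Exercise~10.4]{EISEN_COMM} to see that modding out by $x_i - \prod_j y_{i,j}$ preserves integrality, then iterates over $i$ to reach $S/J$. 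Your argument gives an explicit description of the generic fiber and avoids the regular-sequence trick; the paper's approach has the advantage that the same ring $T$ and sequence $\{x_i - \prod_j y_{i,j}\}$ already handle parts (i)--(iv), so (vi) slots into a single framework. (Your freeness lemma would likewise yield (ii) and (iii) directly, by tensoring a minimal free $R$-resolution of $R/I$ with the free $R$-module $S$.) One point worth making explicit in your write-up: when you express the generic fiber as a $Q$-tensor product of the hypersurfaces $Q[y_{i,1},\ldots,y_{i,\ell_i}]/(\prod_j y_{i,j} - \bar x_i)$, the further identification of each factor with a Laurent polynomial ring is essential before tensoring, since tensor products of domains over a field need not be domains in general; you do carry this out, so the argument is sound.
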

	\begin{proof}
		Let $T$ be the polynomial ring $T=\kk[\xx,\yy] \cong R \otimes_\kk S$ with its natural $\NN^p$-grading induced from the ones of $R$ and $S$. 
		We see $R$ and $S$ as subrings of $T$.
		We consider the quotient ring $T/IT$ and notice that $\{x_{i} - \prod_{j=1}^{\ell_i} y_{i,j}\}_{1\le i \le n}$ is a regular sequence of homogeneous elements on $T/IT$.
		We also have the following natural isomorphism 
		$$
		\frac{T}{IT + \left(\{x_{i} - \prod_{j=1}^{\ell_i} y_{i,j}\}_{1\le i \le n}\right)} \;\cong\; S/J.
		$$
		As the natural inclusion $R \hookrightarrow T$ is a polynomial extension, we have that $\dim(T/IT) = \dim(R/I) + \dim(S) = \dim(R) + \dim(S) - \codim(I)$ and that $T/IT$ is Cohen-Macaulay if and only if $R/I$ is Cohen-Macaulay.
		So, by cutting out with the regular sequence described above, we obtain that 
		$		
		\dim(S/J) = \dim(T/IT) - \dim(R) = \dim(S) - \codim(I)
		$
		and that $S/J$ is Cohen-Macaulay if and only if $T/IT$ is Cohen-Macaulay.
		This completes the proofs of parts (i) and (iv).
		
		Let $F_\bullet : \cdots  \xrightarrow{f_2} F_1 \xrightarrow{f_1} F_0$ be a $\ZZ^p$-graded free $R$-resolution of $R/I$.
		Since $\{x_{i} - \prod_{j=1}^{\ell_i} y_{i,j}\}_{1\le i \le n}$ is a regular sequence on both $T$ and $T/IT$, it follows that 
		$$
		\Tor_k^T\Big(T/IT, T/\Big(\{x_{i} - \prod_{j=1}^{\ell_i} y_{i,j}\}_{1\le i \le n}\Big)\Big) = 0 \quad \text{ for all $k > 0$,}
		$$ and so $G_\bullet = F_\bullet \otimes_R T/(\{x_{i} - \prod_{j=1}^{\ell_i} y_{i,j}\}_{1\le i \le n})$ provides (up to isomorphism) a $\ZZ^p$-graded free $S$-resolution of $S/J$.
		The identification of $G_\bullet$ as a resolution of $S$-modules is the same as $\phi(F_\bullet)$ (more precisely,  $G_\bullet$ has the same Betti numbers as $F_\bullet$ and the $i$-th differential matrix of $G_\bullet$ is given by the substitution $\phi(f_i)$ of the differential matrix $f_i$).
		We obtained the result of part (ii).
		Therefore,  by definition, we have the equalities $\mathcal{K}(R/I;\ttt) = \mathcal{K}(S/J;\ttt)$ and $\mathcal{C}(R/I;\ttt) = \mathcal{C}(S/J;\ttt)$ that settle part (iii).
		
		To show part (v) we can use Buchberger's algorithm (see, e.g., \cite[Chapter 15]{EISEN_COMM}). 
		Indeed, we can perform essentially the same steps of the algorithm in a set of generators of $I$ and the corresponding set of generators for $J$; for instance, for any two polynomials $f, g \in R$ we have the following relation of $S$-polynomials $S(\phi(f), \phi(g)) = \phi(S(f,g))$.
		
		Lastly, we concentrate on the proof of part (vi).
		Suppose that $I \subset R$ is a prime ideal not containing any variable.
		It then follows that $T/IT$ is a domain and that $\{y_{1,2}\cdots y_{1,\ell_1},\, x_1\}$ is a regular sequence on $T/IT$, and so \cite[Exercise 10.4]{EISEN_COMM} implies that  $T/(IT, x_1 - \prod_{j=1}^{\ell_1}y_{1,j})$ is a also a domain. 
		Finally, by repeating iteratively this argument we obtain that $
		T/ \big(IT + \big(\{x_{i} - \prod_{j=1}^{\ell_i} y_{i,j}\}_{1\le i \le n}\big)\big) \cong S/J
		$
		is a domain.  
		This concludes the proof of part (vi).
	\end{proof}

	\begin{remark}
		Without too many changes the above arguments could be used to define the \emph{standardization of a module}.
		In that case, one could proceed by standardizing a presentation matrix of a module.
	\end{remark}

	A direct known consequence of the above theorem is the following remark.
	
	\begin{remark}
		\label{rem_posit_non_std}
		For any $R$-homogeneous ideal $I \subset R$ the multidegree polynomial $\mathcal{C}(R/I; \ttt)$ has non-negative coefficients.
		Indeed, it follows from \autoref{thm_std}(iii) and the (well-known) fact that multidegrees are non-negative in a standard multigraded setting.
	\end{remark}

	The following theorem shows that the support of the multidegree polynomial is a discrete polymatroid for prime ideals in a polynomial ring with arbitrary positive multigrading.
	It is a consequence of \cite[Theorem A]{POSITIVITY}.
	
	\begin{theorem}
		\label{thm_polymatroid_multdeg}
		Assume \autoref{setup_standardization}.
		Let $P \subset R$ be an $R$-homogeneous prime ideal.
		Then the support of the multidegree polynomial  $\mathcal{C}(R/P;\ttt)$ is a discrete polymatroid.
	\end{theorem}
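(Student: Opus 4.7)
The plan is to reduce the statement to the standard multigraded case, already established as \autoref{thm_pos_multdeg}, by invoking the standardization construction developed in \autoref{thm_std}. I would proceed in two stages depending on whether $P$ contains one of the variables.

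First I would handle the case where $P$ does not contain any variable of $R$. In this situation, the standardization $J = \phi(P)S$ is an $S$-homogeneous prime ideal in the standard $\NN^p$-graded polynomial ring $S$ by \autoref{thm_std}(vi), and $\mathcal{C}(R/P;\ttt) = \mathcal{C}(S/J;\ttt)$ by \autoref{thm_std}(iii). Applying \autoref{thm_pos_multdeg} to $J$ concludes this case.

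For the general case, let $V = \{i : x_i \in P\}$ and set $R' = \kk[x_j : j \notin V]$, $P' = P \cap R'$. Then $P' \subset R'$ is prime and contains no variable of $R'$, and one has $P = P'R + (x_i : i \in V)$. The quotient $R/P'R \cong (R'/P')[x_i : i \in V]$ is a polynomial ring over a domain, so the set $\{x_i : i \in V\}$ is a regular sequence on it, and $R/P = (R/P'R)/(x_i : i \in V)$. Iteratively applying \autoref{rem_multdeg_facts}(ii)(b) to this regular sequence, together with the identification $\mathcal{C}(R/P'R;\ttt) = \mathcal{C}(R'/P';\ttt)$ (which holds because tensoring an $R'$-free resolution of $R'/P'$ with $R$ produces an $R$-free resolution of $R/P'R$ with the same Betti numbers, while codimensions are preserved by the flat extension $R' \hookrightarrow R$), yields
$$
\mathcal{C}(R/P;\ttt) \;=\; \prod_{i \in V} \langle \deg(x_i), \ttt\rangle \cdot \mathcal{C}(R'/P';\ttt).
$$

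By the first case, $\supp(\mathcal{C}(R'/P';\ttt))$ is a discrete polymatroid. For each $i \in V$, writing $\deg(x_i) = (a_{i,1}, \ldots, a_{i,p})$, the support of $\langle \deg(x_i), \ttt\rangle$ is $\{\ee_j : a_{i,j} > 0\}$, which is the collection of bases of the rank-$1$ matroid on $\{j : a_{i,j} > 0\}$, in particular a discrete polymatroid. Because multidegree polynomials have non-negative coefficients by \autoref{rem_multdeg_facts}(v), there is no cancellation in the product above, so $\supp(\mathcal{C}(R/P;\ttt))$ equals the Minkowski sum of these supports. \autoref{rem_basic_polymatroids}(ii) then guarantees that this Minkowski sum is a discrete polymatroid, completing the argument. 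The only mildly technical point is the factorization formula together with the identification $\mathcal{C}(R/P'R;\ttt) = \mathcal{C}(R'/P';\ttt)$, but both are routine consequences of \autoref{rem_multdeg_facts}.
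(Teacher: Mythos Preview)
Your proposal is correct and follows essentially the same approach as the paper: separate out the variables contained in $P$, use \autoref{rem_multdeg_facts}(ii)(b) to obtain the factorization $\mathcal{C}(R/P;\ttt) = \prod_{i}\langle \deg(x_i),\ttt\rangle \cdot \mathcal{C}(R/P'R;\ttt)$, reduce the variable-free factor to the standard multigraded case via \autoref{thm_std}(iii)(vi) and \autoref{thm_pos_multdeg}, and conclude with \autoref{rem_basic_polymatroids}(ii). The only organizational difference is that the paper standardizes $P'R \subset R$ directly (so it stays in the same ambient ring $S$), whereas you pass to $R'$ and standardize $P' \subset R'$ there, invoking the identification $\mathcal{C}(R/P'R;\ttt)=\mathcal{C}(R'/P';\ttt)$; both routes are equivalent.
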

	\begin{proof}
		Let $\mathcal{L} = \{ i \mid x_i \in P \}$ be the set of indices such that the corresponding variables belong to $P$.
		We consider the polynomial ring
		$
		R' =  \kk[x_{i} \mid i \notin \mathcal{L}] \subset R.
		$
		Let $P' \subset R'$ be the (unique) ideal that satisfies the condition $P = P'R + \left(x_i \mid i \in \mathcal{L}\right)$.
		By construction $P' \subset R'$ does contain any variable.
		Since $R/P \cong R'/P'$, it follows that $P'$ is also a prime ideal.
		As a consequence of \autoref{thm_std}(vi) the standardization $Q = \phi(P'R) \subset S$ is a prime ideal.
		
		Since the variables $x_{i}$ with indices in $\mathcal{L}$ form a regular sequence on $R/P'R$, we obtain the equation 
		$$
			\mathcal{C}(R/P;\ttt) \;=\; \prod_{i \in \mathcal{L}} \langle \deg(x_i), \ttt \rangle \,\cdot\, \mathcal{C}(R/P'R;\ttt) 
		$$
		where $\langle \deg(x_i), \ttt \rangle = a_{i,1}t_1 + \cdots + a_{i,p}t_p \in \NN[\ttt]$ after writing $(a_{i,1}, \ldots,a_{i,p}) = \deg(x_i) \in \NN^p$
		(see \autoref{rem_multdeg_facts}(ii)).
		From \autoref{thm_pos_multdeg} we have that the support of $\mathcal{C}(S/Q; \ttt)$ is a discrete polymatroid, and then \autoref{thm_std}(iii) implies that the support of $\mathcal{C}(R/P'R; \ttt)$ is also a discrete polymatroid.
		On the other hand, it is clear that the support of each linear polynomial $\langle \deg(x_i), \ttt \rangle$ is a discrete polymatroid.
		Finally, \autoref{rem_basic_polymatroids}(ii) yields that the support of $\mathcal{C}(R/P;\ttt)$ is a discrete polymatroid.
	\end{proof}

	\begin{corollary}
		\label{cor_CM_gin_std}
		Assume \autoref{setup_standardization} with $\kk$ an infinite field.
		Let $P \subset R$ be an $R$-homogeneous prime ideal, and $Q = \phi(P)S$ be its standardization.
		Then $\sqrt{\gin(Q)}$ is a Cohen-Macaulay ideal.
	\end{corollary}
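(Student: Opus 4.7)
The plan is to emulate the proof of \autoref{thm_sqrt_gin_CM}, substituting its two prime-dependent inputs: the polymatroid support of $\mathcal{C}(S/Q;\ttt)$, and the equidimensionality of $\sqrt{\gin(Q)}$. The first carries over directly: by \autoref{thm_std}(iii) and \autoref{thm_polymatroid_multdeg}, $\mathcal{C}(S/Q;\ttt) = \mathcal{C}(R/P;\ttt)$ has polymatroid support. The real work will be establishing equidimensionality of $\sqrt{\gin(Q)}$, which cannot be read off from Kalkbrener--Sturmfels directly, since $Q$ typically fails to be prime as soon as $P$ contains some variables.

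For the equidimensionality, the plan is to control the $\mathcal{G}$-polynomial of $S/Q$. Writing $P = P'R + (x_i \mid i \in \mathcal{L})$ with $P' \subset R' := \kk[x_i \mid i \notin \mathcal{L}]$ a prime containing no variable, \autoref{thm_std}(vi) forces $\tilde{Q} := \phi(P'R)S \subset S$ to be prime, and the standardization takes the form $Q = \tilde{Q} + (m_i \mid i \in \mathcal{L})$ with $m_i = y_{i,1}\cdots y_{i,\ell_i}$ a regular sequence on $S/\tilde{Q}$. By \autoref{rem_multdeg_facts}(ii),
$$
\mathcal{K}(S/Q;\ttt) \;=\; \mathcal{K}(S/\tilde{Q};\ttt) \cdot \prod_{i \in \mathcal{L}}\bigl(1 - \ttt^{\deg(x_i)}\bigr).
$$
The opening theorem of \autoref{sect_multdeg_primes} gives $\mathcal{G}(S/\tilde{Q};\ttt) = \mathcal{C}(S/\tilde{Q};\ttt)$, homogeneous of total degree $\codim(\tilde{Q})$. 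Substituting $\ttt \mapsto \mathbf{1}-\ttt$ in the display above and extracting the lowest-total-degree parts of both sides will then give
$$
\mathcal{G}(S/Q;\ttt) \;=\; \mathcal{C}(S/\tilde{Q};\ttt) \cdot \prod_{i \in \mathcal{L}}\langle \deg(x_i),\ttt\rangle \;=\; \mathcal{C}(S/Q;\ttt),
$$
a polynomial that is homogeneous of total degree $\codim(Q)$.

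To conclude, since $\mathcal{G}$ depends only on the Hilbert series, $\mathcal{G}(S/\gin(Q);\ttt) = \mathcal{G}(S/Q;\ttt)$. For the Borel-fixed monomial ideal $\gin(Q)$, \cite[Proposition 3.12]{CDNG_MINORS} yields an expansion $\mathcal{G}(S/\gin(Q);\ttt) = \sum_k \mu_k\, \ttt^{\mathbf{a}_k}$, where the $P_{\mathbf{a}_k}$'s are the minimal primes of $\gin(Q)$ and $\mu_k$ their multiplicities. Homogeneity of $\mathcal{G}(S/Q;\ttt)$ then forces $|\mathbf{a}_k| = \codim(Q)$ for every $k$, so $\sqrt{\gin(Q)}$ is equidimensional. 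Combined with the polymatroid structure of $\{\mathbf{a}_k\}$ secured in the first step, the Alexander dual of $\sqrt{\gin(Q)}$ is the polarization of a polymatroidal ideal, hence has a linear resolution, and Eagon--Reiner will give the desired Cohen--Macaulayness. The main obstacle is this equidimensionality step: the $\mathcal{G}$-polynomial calculation is precisely what substitutes for Kalkbrener--Sturmfels in the non-prime setting, and it relies crucially on the regular-sequence factorization of $\mathcal{K}$ together with the fact that $\mathcal{G}=\mathcal{C}$ already holds for primes in the standard multigraded setting.
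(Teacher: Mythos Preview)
Your approach follows the paper's in reducing to the argument of \autoref{thm_sqrt_gin_CM}, but you are more careful about one point the paper glosses over. The paper's proof simply notes that $\supp(\mathcal{C}(S/Q;\ttt))$ is a discrete polymatroid (via \autoref{thm_polymatroid_multdeg} and \autoref{thm_std}(iii)) and then asserts that ``the proof of \autoref{thm_sqrt_gin_CM} only depends on'' this fact. But as you correctly observe, that proof also uses equidimensionality of $\sqrt{\gin(P)}$, which there comes from Kalkbrener--Sturmfels and is not automatic for the non-prime $Q$. Your plan to recover it by showing $\mathcal{G}(S/Q;\ttt)=\mathcal{C}(S/Q;\ttt)$ and then invoking \cite[Proposition~3.12]{CDNG_MINORS} is exactly right.

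That said, one step of yours needs more than ``extracting the lowest-total-degree parts''. By definition, extracting lowest degree from $\mathcal{K}(S/Q;\mathbf{1}-\ttt)$ gives $\mathcal{C}(S/Q;\ttt)$, not $\mathcal{G}(S/Q;\ttt)$; the equality $\mathcal{G}=\mathcal{C}$ requires that every monomial in $\supp\bigl(\mathcal{K}(S/Q;\mathbf{1}-\ttt)\bigr)$ be divisible by one of degree $\codim(Q)$ in that support. To see this, use $\supp(FG)\subseteq\supp(F)+\supp(G)$: any $\ttt^{\mathbf{c}}$ appearing in the product $\mathcal{K}(S/\tilde{Q};\mathbf{1}-\ttt)\cdot\prod_{i\in\mathcal{L}}\bigl(1-(\mathbf{1}-\ttt)^{\deg(x_i)}\bigr)$ decomposes as $\ttt^{\mathbf{a}}\cdot\prod_i\ttt^{\mathbf{b}_i}$ with each factor in the respective support; by $\mathcal{G}(S/\tilde{Q})=\mathcal{C}(S/\tilde{Q})$ one has $\mathbf{a}\ge\mathbf{a}_0$ for some $\mathbf{a}_0\in\supp(\mathcal{C}(S/\tilde{Q}))$, and each $\mathbf{b}_i\ge\ee_{k_i}$ for some $\ee_{k_i}\in\supp(\langle\deg(x_i),\ttt\rangle)$. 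Hence $\ttt^{\mathbf{c}}$ is divisible by $\ttt^{\mathbf{a}_0}\prod_i t_{k_i}$, and this monomial lies in $\supp(\mathcal{C}(S/Q;\ttt))$ because $\mathcal{C}(S/\tilde{Q})$ and each $\langle\deg(x_i),\ttt\rangle$ have non-negative coefficients, so their product has no cancellation in its lowest-degree part. With this observation inserted, your proof is complete.
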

	\begin{proof}
		By \autoref{thm_polymatroid_multdeg} and \autoref{thm_std}(iii), the support of $\mathcal{C}(S/Q;\ttt)$ is a discrete polymatroid.
		One may notice that the proof of \autoref{thm_sqrt_gin_CM} only depends on the fact that the support of the multidegree polynomial of the ideal is a discrete polymatroid. 
		Therefore, by using the same arguments applied to $Q$ (which may not be prime), we conclude that $\sqrt{\gin(Q)}$ is a Cohen-Macaulay ideal.
	\end{proof}
	
	\subsection{Cartwright-Sturmfels ideals in positive multigradings}
	\label{subsect_CS_ideals}
	
	The purpose of this subsection is to extend the notion of Cartwright-Sturmfels ideals to arbitrary positive multigradings. 
	This family of ideals was defined and studied in a series of papers \cite{CDNG_CS_IDEALS,CDNG_GIN,CDNG_GRAPH,CDNG_MINORS,conca2022radical} over a standard multigraded setting.

	\begin{definition}
		\label{def_new_CS}
		An $R$-homogeneous ideal $I \subset R$ is said to be \emph{Cartwright-Sturmfels} (CS for short) if there exists an $S$-homogeneous radical Borel-fixed ideal $K \subset S$ such that $\mathcal{K}(R/I;\ttt) = \mathcal{K}(S/K;\ttt)$.
	\end{definition}

	The first basic (yet important) observation that one can make is the following lemma.
	
	\begin{lemma}
		\label{lem_equiv_def_CS}
		Let $I \subset R$ be an $R$-homogeneous ideal and $J = \phi(I)S$ be its standardization.
		Then, $I$ is {\rm CS} {\rm(}in the sense of \autoref{def_new_CS}{\rm)} if and only if $J$ is {\rm CS} {\rm(}in the sense of \cite{CDNG_CS_IDEALS}{\rm)}.
	\end{lemma}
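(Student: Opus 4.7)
The plan is to reduce the equivalence to the relation $\mathcal{K}(R/I;\ttt) = \mathcal{K}(S/J;\ttt)$ already recorded in \autoref{thm_std}(iii), together with the observation that in a standard $\NN^p$-graded polynomial ring the $\mathcal{K}$-polynomial and the multigraded Hilbert function determine each other.

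First I would make explicit the bridge between the two definitions. Recall that the multigraded Hilbert series of a $\ZZ^p$-graded $S$-module $M$ factors as
\[
\Hilb_M(\ttt) \;=\; \frac{\mathcal{K}(M;\ttt)}{\prod_{i=1}^{p}(1-t_i)^{r_i}},
\]
where $r_i$ is the number of variables of $S$ of degree $\ee_i$; since the denominator depends only on $S$, two $S$-homogeneous ideals have the same multigraded Hilbert function if and only if they have the same $\mathcal{K}$-polynomial. Consequently, the original CS condition of \cite{CDNG_CS_IDEALS} (``$J \subset S$ has the same multigraded Hilbert function as some radical Borel-fixed ideal'') may be rephrased as: there exists an $S$-homogeneous radical Borel-fixed ideal $K \subset S$ with $\mathcal{K}(S/J;\ttt) = \mathcal{K}(S/K;\ttt)$.

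Now the equivalence is almost formal. Assume $I \subset R$ is CS in the sense of \autoref{def_new_CS}, so there exists a radical Borel-fixed ideal $K \subset S$ with $\mathcal{K}(R/I;\ttt) = \mathcal{K}(S/K;\ttt)$. By \autoref{thm_std}(iii) we have $\mathcal{K}(R/I;\ttt) = \mathcal{K}(S/J;\ttt)$, so $\mathcal{K}(S/J;\ttt) = \mathcal{K}(S/K;\ttt)$, whence $J$ is CS in the sense of \cite{CDNG_CS_IDEALS} by the reformulation above. Conversely, if $J \subset S$ is CS in the sense of \cite{CDNG_CS_IDEALS}, there is a radical Borel-fixed $K \subset S$ with $\mathcal{K}(S/J;\ttt) = \mathcal{K}(S/K;\ttt)$, and applying \autoref{thm_std}(iii) once more gives $\mathcal{K}(R/I;\ttt) = \mathcal{K}(S/K;\ttt)$, which is exactly the requirement of \autoref{def_new_CS}.

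Since both implications are routine once the Hilbert-function versus $\mathcal{K}$-polynomial dictionary is in place, I do not anticipate a serious obstacle; the only subtlety is to remember that this dictionary requires the standard multigrading on $S$ (so that the denominator of the Hilbert series depends only on the ring), which is the whole reason the standardization construction was introduced.
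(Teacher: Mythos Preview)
Your proof is correct and follows essentially the same approach as the paper: invoke \autoref{thm_std}(iii) to obtain $\mathcal{K}(R/I;\ttt)=\mathcal{K}(S/J;\ttt)$, then use that in the standard $\NN^p$-graded ring $S$ the $\mathcal{K}$-polynomial and the multigraded Hilbert function determine each other, so that the Borel-fixed witness for one notion serves for the other. The paper's argument is the same, only stated more tersely.
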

	\begin{proof}
		By \autoref{thm_std}(iii), we have $\mathcal{K}(R/I;\ttt) = \mathcal{K}(S/J;\ttt)$. 
		Thus any $S$-homogeneous ideal $K \subset S$ has the same $\mathcal{K}$-polynomial as $I \subset R$ if and only if it has the same multigraded Hilbert function as $J \subset S$.
		So, the equivalence is clear because $J$ is CS when there is a radical Borel-fixed ideal with the same multigraded Hilbert function as $J$.
	\end{proof}

	\begin{proposition}
		\label{prop_sqr_free_primes}
		Let $P \subset R$ be a multiplicity-free prime ideal, then $P$ is $\CS$.
	\end{proposition}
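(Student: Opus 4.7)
By Lemma~\ref{lem_equiv_def_CS}, it suffices to show that the standardization $J = \phi(P)S$ is CS in the standard $\NN^p$-multigraded polynomial ring $S$. Since $\mathcal{C}(S/J;\ttt) = \mathcal{C}(R/P;\ttt)$ by Theorem~\ref{thm_std}(iii), the ideal $J$ is multiplicity-free. The plan is to reduce to the case where $P$ contains no variable of $R$: in that situation, Theorem~\ref{thm_std}(vi) ensures that $J$ is a prime ideal of $S$. Being then a multiplicity-free prime in a standard multigraded polynomial ring, Theorem~\ref{thm_Brion}(iii) (Brion's theorem) gives that $\gin_{>'}(J)$ is a radical (Borel-fixed) monomial ideal with the same multigraded Hilbert function as $J$; this provides the required witness that $J$, hence $P$, is CS.

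For the general case, set $\mathcal{L} = \{i : x_i \in P\}$, $R' = \kk[x_j : j \notin \mathcal{L}]$, and $P' = P \cap R'$, so that $P = P'R + (x_i : i \in \mathcal{L})$ and $P' \subset R'$ is a prime containing no variable. From the factorization $\mathcal{C}(R/P;\ttt) = \prod_{i \in \mathcal{L}} \langle \deg(x_i), \ttt \rangle \cdot \mathcal{C}(R'/P';\ttt)$ (cf.\ the proof of Theorem~\ref{thm_polymatroid_multdeg}) and the elementary fact that a product of polynomials in $\ZZ[\ttt]$ with non-negative coefficients has all coefficients in $\{0,1\}$ only if each factor does, I would deduce that $P'$ is multiplicity-free and $\deg(x_i) \in \{0,1\}^p$ for every $i \in \mathcal{L}$. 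Applying the previous step to $P'$ then yields a radical Borel-fixed witness $K' \subset S' = \kk[y_{i,j} : i \notin \mathcal{L}]$ with $\mathcal{K}(S'/K';\ttt) = \mathcal{K}(R'/P';\ttt)$.

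To produce a witness $K \subset S$ for $J$ being CS, I would append to $K'S$ squarefree monomial generators of multidegrees $\deg(x_i)$ for $i \in \mathcal{L}$ forming a regular sequence on $S/K'S$; the $\mathcal{K}$-polynomial then matches $\mathcal{K}(R/P;\ttt) = \mathcal{K}(R'/P';\ttt) \cdot \prod_{i \in \mathcal{L}}(1 - \ttt^{\deg(x_i)})$ via the Koszul complex, using that $\{x_i : i \in \mathcal{L}\}$ is a regular sequence on $R/P'R$. The hard part will be arranging matters so that $K$ is simultaneously radical and Borel-fixed. Radicality follows from the squarefreeness of the additional generators, which is permitted precisely by the constraint $\deg(x_i) \in \{0,1\}^p$. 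Borel-fixedness is the delicate point, since $K'$ was Borel-fixed for the Borel of $S'$ but the Borel of $S$ is larger, mixing within each type the variables inherited from $S'$ with the new variables $y_{i,j}$ ($i \in \mathcal{L}$). A reasonable tactic is to proceed inductively, removing one $x_i \in P$ at a time, and to choose the ordering of the new type-$\ee_k$ variables so that Borel images of the new monomial generators are absorbed into the already-constructed part of the ideal; the condition $\deg(x_i) \in \{0,1\}^p$ keeps the relevant combinatorics tractable.
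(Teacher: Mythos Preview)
Your treatment of the case where $P$ contains no variable is correct and matches the paper: standardize, get a multiplicity-free prime in $S$, invoke Brion's theorem to see that $\gin(J)$ is the required radical Borel-fixed witness.

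The genuine gap is in the reduction step. You correctly observe that multiplicity-freeness forces $\deg(x_i)\in\{0,1\}^p$ for $i\in\mathcal L$ and that $P'$ is again a multiplicity-free prime. But your proposed witness $K=K'S+(\text{squarefree monomials of multidegree }\deg(x_i))$ need not be Borel-fixed, and you acknowledge this is ``the delicate point'' without resolving it. Concretely, take $p=2$, $R=\kk[x_1,x_2,x_3,x_4]$ with $\deg(x_1)=(1,1)$, $\deg(x_2)=(1,0)$, $\deg(x_3)=\deg(x_4)=(0,1)$, and $P=(x_1,\,x_3-x_4)$. Then $K'=(y_{3,1})$ is the witness for $P'$, and your construction gives $K=(y_{3,1},\,y_{1,1}y_{1,2})$. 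Regardless of how you order the $\ee_2$-variables, this fails to be Borel-fixed: if $y_{1,2}$ sits above $y_{3,1}$ then $(y_{3,1})$ is not fixed; if $y_{1,2}$ sits below, then the Borel orbit of $y_{1,1}y_{1,2}$ produces $y_{1,1}y_{4,1}\notin K$. A correct witness here is $(b_1,\,a_1b_2)$ with $a_1$ the top $\ee_1$-variable and $b_1,b_2$ the top two $\ee_2$-variables, which is \emph{not} built from the standardization variables of $x_1$ alone. So your ``append the obvious monomials'' construction is not the right one, and the inductive tactic you sketch would have to be substantially different.

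The paper sidesteps this entirely with a clean trick: adjoin new variables $z_i$ ($i\in\mathcal L$) with $\deg(z_i)=\deg(x_i)$ to form $T=R[z_i\mid i\in\mathcal L]$, and replace $P$ by $\mathfrak P=P'T+(x_i-z_i\mid i\in\mathcal L)$. Now $\mathfrak P$ is a prime containing no variable and $\mathcal K(T/\mathfrak P;\ttt)=\mathcal K(R/P;\ttt)$. Standardizing $\mathfrak P$ in the larger ring $W\supset S$ gives a genuine multiplicity-free prime $\mathfrak Q\subset W$, so Brion applies directly and $\mathfrak Q$ is CS. Since the standardization $Q\subset S$ of $P$ satisfies $\mathcal K(W/QW;\ttt)=\mathcal K(W/\mathfrak Q;\ttt)$, the extension $QW$ is CS; then \cite[Proposition 2.7]{CDNG_CS_IDEALS} (CS is preserved under passing between an ideal and its extension to a polynomial overring) gives that $Q$ is CS in $S$, and Lemma~\ref{lem_equiv_def_CS} finishes. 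This avoids any hand-built Borel-fixed ideal.
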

	\begin{proof}
		As in the proof of \autoref{thm_polymatroid_multdeg}, we may get rid of the variables that belong to $P$.
		Let $\mathcal{L} = \{ i \mid x_i \in P \}$ be the set that indexes the variables belonging to $P$.
		Write $P = P' + \left(x_i \mid i \in \mathcal{L}\right)$ with $P' \subset R$ only involving the variables not in $\mathcal{L}$.
		Let $T = R\left[z_i \mid i \in \mathcal{L}\right]$ be a positively $\NN^p$-graded polynomial ring extending the grading of $R$ and with $\deg(z_i) = \deg(x_i)$.
		Let $\mathfrak{P} = P'T + \left(x_i - z_i \mid i \in \mathcal{L}\right) \subset T$.
		Notice that $\mathfrak{P}$ is a prime ideal containing no variable and that $\mathcal{K}(T/\mathfrak{P}; \ttt) = \mathcal{K}(R/P; \ttt)$.
		Let $W = S\left[w_{i,j} \mid i \in \mathcal{L}, 1 \le j \le \ell_i \right]$ be a standard $\NN^p$-graded polynomial ring where we consider the corresponding standardization $\mathfrak{Q} \subset W$ of $\mathfrak{P} \subset T$.
		From \autoref{thm_std}(iii)(vi) the standardization $\mathfrak{Q} \subset W$ is a multiplicity-free prime ideal.
		Then \autoref{thm_Brion}(iii) implies that $\mathfrak{Q}$ is CS (as we may extend the field $\kk$; see \autoref{rem_infinite_field}).
		Let $Q \subset S$ be the standardization of $P \subset R$.
		Since ideal extension $QW \subset W$ has the same multigraded Hilbert function as $\mathfrak{Q} \subset W$, it follows that $QW$ is CS, and so \cite[Proposition 2.7]{CDNG_CS_IDEALS} implies that $Q \subset S$ is also CS.
		Finally, $P \subset R$ is CS by \autoref{lem_equiv_def_CS}.
	\end{proof}

	The theorem below lists some important basic properties of CS ideals in $R$.
	
	\begin{theorem}
		\label{thm_props_CS}
		Assume \autoref{setup_standardization}.
		Let $I \subset R$ be an $R$-homogeneous {\rm CS} ideal.
		Then the following statements hold: 
		\begin{enumerate}[\rm (i)]
			\item $\init_{>}(I)$ is radical and {\rm CS} for any monomial order $>$ on $R$; in particular, $I$ is radical.
			\item The $\NN$-graded Castelnuovo-Mumford regularity $\reg(I)$ is bounded from above by $p$.
			\item $I$ is a multiplicity-free ideal.
			\item If $P \subset R$ is a minimal prime of $I$, then $P$ is CS.
			\item All reduced Gr\"obner bases of $I$ consist of elements of multidegree $\le (1,\ldots,1) \in \NN^p$.
			In particular, $I$ has a universal Gr\"obner basis of elements of multidegree $\le (1,\ldots,1) \in \NN^p$.
		\end{enumerate}
	\end{theorem}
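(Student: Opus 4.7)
The strategy is to reduce each statement to the corresponding known property of Cartwright-Sturmfels ideals in the standard multigraded setting by means of the standardization $\phi: R \to S$. By \autoref{lem_equiv_def_CS}, $I \subset R$ is {\rm CS} if and only if $J := \phi(I)S$ is {\rm CS} in the sense of \cite{CDNG_CS_IDEALS}, so the catalogue of known properties in the standard setting is immediately available for $J$, and the issue becomes one of transferring them back to $I$ via \autoref{thm_std}.

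Parts (i), (ii), (iii), and (v) are handled in a fairly mechanical way. For (i), \autoref{thm_std}(v) identifies $\init_{>'}(J) = \phi(\init_>(I))S$ for a compatible monomial order $>'$ on $S$; since $\phi$ is injective on monomials and preserves divisibility, it sends a monomial $\mu$ to a squarefree monomial in $S$ if and only if $\mu$ is squarefree in $R$, so radicality of $\init_{>'}(J)$ (known in the standard case) pulls back to $\init_>(I)$. The standard argument comparing the initial ideals of $I$ and $\sqrt{I}$ then yields radicality of $I$, and $\init_>(I)$ is itself {\rm CS} since Gr\"obner degeneration preserves the multigraded Hilbert function and hence the $\mathcal{K}$-polynomial. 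For (ii), the equality of $\NN^p$-graded Betti numbers in \autoref{thm_std}(ii), combined with the fact that $|\deg(x_i)| = \ell_i = \sum_{j=1}^{\ell_i} |\deg(y_{i,j})|$ so that $\phi$ preserves total degree, yields equality of $\NN$-graded Betti numbers and hence $\reg_{\NN}(I) = \reg_{\NN}(J) \le p$. Part (iii) is immediate from \autoref{thm_std}(iii). For (v), $\phi$ sends the minimal monomial generators of $\init_>(I)$ bijectively to those of $\init_{>'}(J)$, so the reduced Gr\"obner basis of $I$ corresponds via $\phi$ to that of $J$ with identical $\NN^p$-multidegrees; the bound $\le \mathbf{1}$ descends from the standard case, and the universal Gr\"obner basis assertion follows by taking the union over all monomial orders.

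Part (iv) is the heart of the proof and where I expect the main obstacle. My plan has three steps: (a) show that $I$ is equidimensional, (b) deduce that each minimal prime $P$ of $I$ is a multiplicity-free prime, and (c) invoke \autoref{prop_sqr_free_primes}. For (a), given a minimal prime $P = P' + (x_i : i \in \mathcal{L})$ of $I$ with $P'$ involving no variable (as in the proof of \autoref{thm_polymatroid_multdeg}), I would verify that each prime of the form $\phi(P')S + (y_{i,k_i} : i \in \mathcal{L})$, indexed by choices of $k_i \in \{1,\ldots,\ell_i\}$, is a minimal prime of $J$ of codimension $\codim(P)$: primality follows from \autoref{thm_std}(vi) applied to $P'$ together with the observation that the $y_{i,k_i}$ do not appear in $\phi(P')$, while minimality over $J$ is checked by pullback to $R$, since a strictly smaller prime of $S$ containing $J$ would contract to a prime of $R$ strictly between $I$ and $P$, contradicting minimality of $P$. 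Equidimensionality of $J$ (a known property of {\rm CS} ideals in the standard setting) then forces $\codim(P) = \codim(J) = \codim(I)$. Given (a), the associativity formula \autoref{rem_multdeg_facts}(iv) together with radicality of $I$ gives $\mathcal{C}(R/I;\ttt) = \sum_P \mathcal{C}(R/P;\ttt)$ summed over all minimal primes of $I$, and multiplicity-freeness of $I$ together with non-negativity of each summand (\autoref{rem_multdeg_facts}(v)) forces each $\mathcal{C}(R/P;\ttt)$ to have coefficients in $\{0,1\}$, so each minimal prime $P$ is multiplicity-free. Finally, \autoref{prop_sqr_free_primes} concludes that each such $P$ is {\rm CS}. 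The hard part is precisely the verification in (a) that the candidate primes are genuinely minimal primes of $J$, which requires the pullback-to-$R$ argument together with the explicit description of the decomposition of $\phi(P)S$ into primes.
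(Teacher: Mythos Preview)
Your treatment of parts (i), (ii), (iii), and (v) is correct and essentially identical to the paper's: each is reduced to the corresponding property of the standardization $J=\phi(I)S$ via \autoref{thm_std}, then the known result for standard CS ideals from \cite{conca2022radical} is invoked. The extra detail you supply (e.g.\ that $\phi$ sends squarefree monomials to squarefree monomials, or that it preserves total degree) is accurate and fills in what the paper leaves implicit.

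For part (iv), your high-level strategy --- show that every minimal prime $P$ of $I$ is multiplicity-free and then apply \autoref{prop_sqr_free_primes} --- is exactly what the paper does. However, your proposed route through equidimensionality has a genuine gap: Cartwright--Sturmfels ideals are \emph{not} equidimensional in general, so the ``known property of CS ideals in the standard setting'' you invoke in step~(a) is false. For instance, in $S=\kk[x_1,x_2,y_1,y_2]$ with the standard $\NN^2$-grading, the ideal $K=(x_1)\cap(y_1,y_2)=(x_1y_1,x_1y_2)$ is radical and Borel-fixed (hence CS), yet its minimal primes have codimensions $1$ and $2$. Thus even when your candidate primes $\phi(P')S+(y_{i,k_i})$ are genuinely minimal over $J$, you cannot conclude that $\codim(P)=\codim(I)$, and the associativity-formula argument in step~(b) only controls the minimal primes of smallest codimension.

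The paper's own proof of (iv) is a single sentence: ``Since $I$ is multiplicity-free, we conclude that $P$ also is.'' This assertion does not follow from multiplicity-freeness of $I$ alone --- in a non-standard grading one can write down radical multiplicity-free ideals with non-multiplicity-free minimal primes (e.g.\ $I=(xy,xz)$ in $\kk[x,y,z]$ with $\deg(x)=\ee_1$, $\deg(y)=\deg(z)=\ee_1+\ee_2$, where the minimal prime $(y,z)$ has $\mathcal{C}=(t_1+t_2)^2$). So the CS hypothesis must enter in a more essential way than either your argument or the paper's stated one makes explicit. A correct justification would likely proceed by showing that for each minimal prime $P$ of $I$ there is a minimal prime $Q$ of $J$ with $\phi^{-1}(Q)=P$ (this follows from faithful flatness of $R\to S$), invoking the standard-graded result that minimal primes of CS ideals are CS, and then relating $\mathcal{C}(R/P;\ttt)$ to $\mathcal{C}(S/Q;\ttt)$ --- but this last step needs care, since $\phi(P)S$ is not in general prime.
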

	\begin{proof}
		(i) It follows from \autoref{thm_std}(v) and \cite[Proposition 3.4(2)]{conca2022radical}.
		
		(ii) It is a consequence of \autoref{thm_std}(ii) and \cite[Proposition 3.4(3)]{conca2022radical}.
		
		(iii) It follows from \autoref{thm_std}(iii) and \cite[Proposition 3.6(1)]{conca2022radical}.
		
		(iv) Since $I$ is multiplicity-free, we conclude that $P$ also is. 
		We then obtain the result from \autoref{prop_sqr_free_primes}.
		
		(v) It follows from \autoref{thm_std}(v) and \cite[Proposition 3.4(6)]{conca2022radical}.
	\end{proof}

	Finally, we point out some interesting consequences of our work, that signal very rigid properties of certain multigraded Hilbert functions.
	To that end, we use the multigraded Hilbert scheme of Haiman and Sturmfels \cite{HAIMAN_STURMFELS}.
	For a given function $h : \NN^p \rightarrow \NN$, the multigraded Hilbert scheme $\HS_{R/\kk}^h$  parametrizes all $R$-homogeneous ideals $I \subset R$ such that $\dim_\kk\left(\left[R/I\right]_\nu\right) = h(\nu)$ for all $\nu \in \NN^p$.
	We have the following rigidity result that extends \cite[Theorem 2.1 and Corollary 2.6]{CS_PAPER}.
	
	\begin{corollary}
		\label{cor_Hilb_sch}
		Assume \autoref{setup_standardization}.
		Let $h:\NN^p \rightarrow \NN$ be a function and consider the corresponding multigraded Hilbert scheme $\HS_{R/\kk}^h$.
		If $\HS_{R/\kk}^h$ contains a $\kk$-point that corresponds to a {\rm CS} ideal and a $\kk$-point that corresponds to a prime ideal, then any $\kk$-point in $\HS_{R/\kk}^h$ corresponds to an ideal that is Cohen-Macaulay and {\rm CS}.
	\end{corollary}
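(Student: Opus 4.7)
The plan is to reduce this corollary to its standard-multigraded analogue, the original Cartwright–Sturmfels rigidity theorem \cite[Theorem 2.1 and Corollary 2.6]{CS_PAPER}, via the standardization map $\phi: R \to S$ of Setup \ref{setup_standardization}. The key point is that standardization transports all of the relevant structure: the $\mathcal{K}$-polynomial (Theorem \ref{thm_std}(iii)), Cohen-Macaulayness (Theorem \ref{thm_std}(iv)), the CS property (Lemma \ref{lem_equiv_def_CS}), and---after a cosmetic adjustment---primality (Theorem \ref{thm_std}(vi)).

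The CS conclusion is essentially tautological. Any two $\kk$-points of $\HS_{R/\kk}^h$ share the Hilbert function $h$, hence the same multigraded Hilbert series; as the denominator $\prod_{i=1}^n (1 - \ttt^{\deg(x_i)})$ depends only on $R$, they share the same $\mathcal{K}$-polynomial. Since Definition \ref{def_new_CS} asks only for the existence of some radical Borel-fixed ideal in $S$ realizing the $\mathcal{K}$-polynomial, the CS property propagates from the single CS witness to every point of $\HS_{R/\kk}^h$.

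For Cohen-Macaulayness, let $I_0$ denote the CS witness and $P$ the prime witness. If $P$ contains any variable of $R$, I would first apply the variable-adjoining trick from the proof of Proposition \ref{prop_sqr_free_primes}: adjoin fresh variables $z_i$ of degree $\deg(x_i)$ for each $x_i \in P$, replace $P$ by the prime $\widetilde{P} = P' + (x_i - z_i)$ (so that $\widetilde{P}$ contains no variable and $R'/\widetilde P \cong R/P$), and carry the corresponding extension for $I_0$ and every other $I \in \HS_{R/\kk}^h$. This preserves the Hilbert function and the CS witness, while Theorem \ref{thm_std}(vi) now guarantees that $\phi(\widetilde{P})$ is prime in the enlarged standard-multigraded polynomial ring. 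By Theorem \ref{thm_std}(iii), the map $I \mapsto \phi(I)S$ sends the (enlarged) Hilbert scheme into a single standard-multigraded Hilbert scheme $\HS_{S/\kk}^{h'}$, whose image contains both a CS point (by Lemma \ref{lem_equiv_def_CS}) and a prime point. Invoking \cite[Theorem 2.1 and Corollary 2.6]{CS_PAPER} for $\HS_{S/\kk}^{h'}$ yields Cohen-Macaulayness for every $\phi(I)S$, and Theorem \ref{thm_std}(iv) then transfers Cohen-Macaulayness back to $R/I$.

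The principal technical obstacle is the careful bookkeeping around the variable-adjoining step: one must check that the enlargement (dictated by the variables of $P$) simultaneously produces matching CS and prime witnesses in the new Hilbert scheme with the same Hilbert function, and that the standardizations of all these ideals land in a common standard-multigraded Hilbert scheme. Once these compatibility verifications are in place, the argument is a direct combination of the Cartwright–Sturmfels rigidity theorem with the standardization toolbox developed in Section \ref{sec:standardization}.
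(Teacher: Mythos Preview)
Your reduction to the standard multigraded setting via standardization is the right instinct, and the observation that the CS property propagates for free is exactly how the paper begins. The gap is in the final step: \cite[Theorem~2.1 and Corollary~2.6]{CS_PAPER} apply only to the specific multigraded Hilbert scheme of the \emph{diagonal} in a product of projective spaces, not to an arbitrary $\HS_{S/\kk}^{h'}$ that merely happens to contain a CS point and a prime point. The corollary you are proving is stated in the paper as a \emph{generalization} of \cite{CS_PAPER}, so it cannot be obtained by invoking that reference for the Hilbert function $h'$.

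What the paper actually does is different and self-contained. After noting (as you do) that every $I$ with Hilbert function $h$ is CS, it passes to the standardizations $J=\phi(I)S$ and $Q=\phi(P)S$. Both are CS (\autoref{lem_equiv_def_CS}), so $\gin(J)$ and $\gin(Q)$ are radical Borel-fixed with the same Hilbert function, hence equal. The crucial point is that $\gin(Q)=\sqrt{\gin(Q)}$ is Cohen--Macaulay by \autoref{cor_CM_gin_std}, which in turn rests on \autoref{thm_polymatroid_multdeg} (the support of $\mathcal{C}(R/P;\ttt)$ is a polymatroid for \emph{any} prime $P$, including those containing variables) together with the Eagon--Reiner argument of \autoref{thm_sqrt_gin_CM}. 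Thus $\gin(J)$ is CM, hence $J$ is CM, hence $I$ is CM by \autoref{thm_std}(iv). Note that this route makes the variable-adjoining trick unnecessary: \autoref{cor_CM_gin_std} already handles primes that contain variables, so you need not arrange for $\phi(P)S$ itself to be prime.
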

\begin{proof}
	Let $[P], [H] \in \HS_{R/\kk}^h$ be $\kk$-points in $\HS_{R/\kk}^h$ such that $P \subset R$ is a prime ideal and $H \subset R$ is a CS ideal.
	Let $I \subset R$ be an $R$-homogeneous ideal such that $[I] \in \HS_{R/\kk}^h$.
	From the definition, it follows that $I$ and $P$ are both CS.
	We may assume that $\kk$ is a infinite field and keep the primeness of $P$ (see \autoref{rem_infinite_field}).
	Let $J \subset S$ and $Q \subset S$ be the standardizations of $I$ and $P$, respectively.
	Since $J$ and $Q$ are CS, both $\gin(J)$ and $\gin(Q)$ are radical, and actually are equal.
	Then $\gin(J) = \gin(Q)$ is Cohen-Macaulay by \autoref{cor_CM_gin_std}.
	Finally, $J \subset S$ is Cohen-Macaulay and \autoref{thm_std}(iv) implies that $I \subset R$ is also Cohen-Macaulay; which concludes the proof.
\end{proof}
	
\section{Examples of determinantal ideals with a fine grading}
	\label{sect_examp_det}

	The goal of this section is to present examples of multigraded ideals in  non-standard positive multigraded polynomial rings that have a multiplicity-free multidegree.
	The examples we selected are determinantal ideals.  
	
	We start with generic determinantal ideals with the finest multidegree structure. 
	Let $\kk$ be a field and set $R=\kk[x_{i,j} \mid (i,j)\in [m]\times [n] ]$ with $m\leq n$.  
	We give $R$ a  multigraded structure by setting $\deg(x_{i,j})=\ee_i\oplus \ff_j \in \NN^m\oplus \NN^n$ where $\ee_1,\dots,\ee_m$ and $\ff_1,\dots, \ff_n$ are  the canonical bases of $\NN^m$ and $\NN^n$, respectively.  
	The determinantal ideal $I_r$ of the $r$-minors of the $m \times n$ generic matrix $X = (x_{i,j})$ is then $R$-homogeneous.  
	We have: 
	
	\begin{theorem} 
		\label{maxmin}
		The  $\kk$-algebra $R/I_m$  has a multiplicity-free multidegree with respect to the  $\NN^m\oplus \NN^n$-grading.
	\end{theorem}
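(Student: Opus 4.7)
The plan is to compute the multidegree polynomial $\mathcal{C}(R/I_m; s, t)$ explicitly, and to verify that every coefficient is either $0$ or $1$.

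First, I would recall that $I_m$ is a prime ideal of codimension $n-m+1$, and that by the classical theorem of Sturmfels--Bernstein--Zelevinsky the maximal minors form a universal Gr\"obner basis of $I_m$: for a diagonal term order $>$, the initial ideal $M = \init_>(I_m)$ is the squarefree monomial ideal generated by the diagonal products
\[
x_{1,j_1} x_{2,j_2} \cdots x_{m,j_m}, \qquad 1 \le j_1 < j_2 < \cdots < j_m \le n.
\]
Since multidegrees are preserved under Gr\"obner degeneration and $M$ is radical, the associativity formula of \autoref{rem_multdeg_facts}(iv) yields
\[
\mathcal{C}(R/I_m; s, t) \;=\; \sum_{F} \,\prod_{(i,j) \in F}(s_i + t_j),
\]
summed over the minimal vertex covers $F$ of size $n - m + 1$ of the hypergraph of generators of $M$.

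A combinatorial analysis of the minimal vertex covers---which can be parameterized by ``staircase'' sequences of row/column transition points---together with a symmetric-function identity then collapses the sum into the closed form
\[
\mathcal{C}(R/I_m; s, t) \;=\; \sum_{k = 0}^{n - m + 1} h_k(s_1, \ldots, s_m)\, e_{n - m + 1 - k}(t_1, \ldots, t_n),
\]
where $h_k$ and $e_\ell$ denote the complete homogeneous and elementary symmetric polynomials. Multiplicity-freeness is then manifest: for each $k$, the polynomial $h_k(s)$ is a sum of distinct $s$-monomials of total degree $k$, the polynomial $e_{n - m + 1 - k}(t)$ is a sum of distinct squarefree $t$-monomials of degree $n - m + 1 - k$, and summands for different values of $k$ are separated by their $s$-degree.

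As a more conceptual alternative, one may proceed through the Cartwright--Sturmfels framework developed in \autoref{sec:standardization}: by \autoref{lem_equiv_def_CS}, $I_m$ is CS in the sense of \autoref{def_new_CS} if and only if its standardization $J = \phi(I_m) S$ (which lives in a standard $\NN^m \oplus \NN^n$-graded polynomial ring with $2mn$ variables, obtained by replacing each $x_{i,j}$ of multidegree $\ee_i + \ff_j$ by a product $y_{i,j} z_{i,j}$ with $\deg(y_{i,j}) = \ee_i$ and $\deg(z_{i,j}) = \ff_j$) is CS in the classical sense, and multiplicity-freeness then follows from \autoref{thm_props_CS}(iii). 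The main technical obstacle in either route is essentially the same: establishing the combinatorial identity that compresses the sum over minimal vertex covers of $M$ (equivalently, that extracts a radical Borel-fixed comparison ideal for $J$) into the closed symmetric-function form. Once this identity is in hand, multiplicity-freeness is automatic.
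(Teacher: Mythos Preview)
Your target formula is correct and is exactly the paper's \autoref{maxminC} rewritten in symmetric-function notation (complete symmetric in the $m$ row variables, elementary symmetric in the $n$ column variables). Your starting point also agrees with the paper's: pass to the diagonal initial ideal $M=\init_>(I_m)$, which is squarefree and equidimensional, so that the associativity formula gives $\mathcal{C}(R/I_m)=\sum_F\prod_{(i,j)\in F}(s_i+t_j)$ over the minimal primes (equivalently, minimal vertex covers) of $M$.

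Where the routes diverge is in how this sum is evaluated. The paper does \emph{not} enumerate the minimal primes of $M$ at all. Instead it uses the decomposition $J_{m,n}=x_{m,n}J_{m-1,n-1}+J_{m,n-1}$ of the initial ideal to produce a short exact sequence, hence the recursion $\mathcal{C}_{m,n}=(t_m+s_n)\mathcal{C}_{m,n-1}+\mathcal{C}_{m-1,n-1}$; the same recursion is trivially checked for the target polynomial, and the boundary cases $m=1$ (complete intersection) and $m=n$ (single element) are immediate. This argument is short and entirely self-contained. Your route, by contrast, hinges on the ``combinatorial analysis \dots\ together with a symmetric-function identity'' collapsing $\sum_F\prod_{(i,j)\in F}(s_i+t_j)$ to $\sum_k h_k(s)\,e_{n-m+1-k}(t)$, and you do not carry this out. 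That collapse is the entire content of the theorem: one must show that when each product of $n-m+1$ binomials is expanded and summed over all staircase covers $F$, every monomial $s^{\mathbf a}t^{\mathbf b}$ with $\mathbf a\in\NN^m$, $\mathbf b\in\{0,1\}^n$, $|\mathbf a|+|\mathbf b|=n-m+1$ occurs exactly once. This is true, but it requires either an explicit bijection or an inductive argument of roughly the same strength as the paper's recursion; as written it is an assertion, not a proof. Your second route via \autoref{lem_equiv_def_CS} and \autoref{thm_props_CS}(iii) is circular in the paper's logic: the only tool available to show the standardization $J$ is CS is \autoref{prop_sqr_free_primes}, which already presupposes multiplicity-freeness, and producing a radical Borel-fixed comparison ideal for $J$ directly is again equivalent to knowing the $\mathcal K$-polynomial explicitly.
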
 
	
	A combinatorial formula for the multidegree of $R/I_r$  is described in \cite[Chap.15 and 16]{miller2005combinatorial} (up to certain sign changes).  
	Indeed, by \cite[Theorem 15.40 and Corollary 16.30]{miller2005combinatorial}, the multidegree of a larger family of determinantal ideals, the Schubert determinantal ideals, is given in terms of pipe-dreams (i.e.,~special tiling of the rectangles by crosses and elbow joints). 
	So one could derive \autoref{maxmin} from the  results in \cite{miller2005combinatorial}. 
	Nevertheless, we present below a self-contained proof of \autoref{maxmin}.  
	To simplify the notation, we set 
	$$\mathcal{K}_{m,n}=\mathcal{K}(R/I_m;  t_1,\dots, t_m,s_1,\dots,s_n) 
	\quad \text{ and } \quad
	\mathcal{C}_{m,n}=\mathcal{C}(R/I_m; t_1,\dots, t_m,s_1,\dots,s_n).$$
	Indeed, we prove the following formula from which  \autoref{maxmin} follows immediately. 
	
	\begin{theorem} 
		\label{maxminC}
		$$\mathcal{C}_{m,n} \,=\, \sum_{(\mathbf{a},\mathbf{b})\in T} \ttt^\mathbf{a}\sss^\mathbf{b}$$
		with $T=\big\{( \mathbf{a},\mathbf{b})\in \NN^m\times \{0,1\}^n \, \mid \, |\mathbf{a}|+|\mathbf{b}|=n-m+1 \big\}$. 
	\end{theorem}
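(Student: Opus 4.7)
My plan is to prove \autoref{maxminC} by induction on $m + n$ via the recursion
\begin{equation*}
\mathcal{C}_{m, n}(\ttt, \sss) \;=\; \mathcal{C}_{m-1, n-1}(\ttt', \sss') \,+\, (t_m + s_n)\, \mathcal{C}_{m, n-1}(\ttt, \sss'),
\end{equation*}
where $\ttt' := (t_1, \ldots, t_{m-1})$ and $\sss' := (s_1, \ldots, s_{n-1})$. First I will rewrite the proposed closed form as $F_{m, n}(\ttt, \sss) := \sum_{k + l = n - m + 1} h_k(\ttt)\, e_l(\sss)$, with $h_k$ and $e_l$ the complete homogeneous and elementary symmetric polynomials; this agrees with $\sum_{(\mathbf{a}, \mathbf{b}) \in T} \ttt^{\mathbf{a}} \sss^{\mathbf{b}}$ because $h_k(\ttt)$ sums all monomials in $\ttt$ of degree $k$ and $e_l(\sss)$ sums all squarefree monomials in $\sss$ of degree $l$. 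That $F_{m, n}$ satisfies the same recursion is then a direct calculation via the splittings $h_k(\ttt) = h_k(\ttt') + t_m\, h_{k-1}(\ttt)$ and $e_l(\sss) = e_l(\sss') + s_n\, e_{l-1}(\sss')$.

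The base cases are handled directly. For $m = 1$ the ideal $I_1 = (x_{1, 1}, \ldots, x_{1, n})$ is a complete intersection, and its Koszul resolution gives $\mathcal{C}_{1, n} = \prod_{j = 1}^n (t_1 + s_j) = F_{1, n}$. For $n = m$ the ideal $I_m = (\det X)$ is principal, so $\mathcal{C}_{m, m} = \sum_i t_i + \sum_j s_j = F_{m, m}$.

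The heart of the proof is establishing the recursion for $\mathcal{C}_{m, n}$. Let $(x_{<m, n}) := (x_{1, n}, \ldots, x_{m-1, n})$; a dimension count shows these $m - 1$ elements form a regular sequence on the Cohen-Macaulay ring $R/I_m$, so $\mathcal{C}(R/(I_m + (x_{<m, n}))) = \prod_{i < m}(t_i + s_n)\, \mathcal{C}_{m, n}$. A block-rank analysis of matrices with $x_{1, n} = \cdots = x_{m-1, n} = 0$ shows set-theoretically $V(I_m + (x_{<m, n})) = V(P_1) \cup V(P_2)$, where $P_1 := I' + (x_{<m, n})$ and $P_2 := I'' + (x_{*, n})$ are primes of codimension $n$, with $I' \subset R$ the ideal of $(m-1)$-minors of the top-left $(m-1) \times (n-1)$ submatrix and $I'' \subset R$ the ideal of $m$-minors of the first $n - 1$ columns. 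The key algebraic input is the radical decomposition $I_m + (x_{<m, n}) = P_1 \cap P_2$. The Mayer--Vietoris sequence $0 \to R/(P_1 \cap P_2) \to R/P_1 \oplus R/P_2 \to R/(P_1 + P_2) \to 0$, combined with the fact that $\codim(P_1 + P_2) > n$ (always true for distinct components of an equidimensional scheme), yields upon substituting $\ttt \mapsto \mathbf{1} - \ttt$, $\sss \mapsto \mathbf{1} - \sss$ in the corresponding $\mathcal{K}$-polynomial identity and extracting the degree-$n$ part the additivity
\begin{equation*}
\mathcal{C}(R/(I_m + (x_{<m, n}))) \;=\; \mathcal{C}(R/P_1) + \mathcal{C}(R/(I_m + (x_{*, n}))).
\end{equation*}
Each term on the right is computed via a regular-sequence cut of a smaller determinantal ring: $\mathcal{C}(R/P_1) = \prod_{i < m}(t_i + s_n)\, \mathcal{C}_{m-1, n-1}(\ttt', \sss')$ and $\mathcal{C}(R/(I_m + (x_{*, n}))) = \prod_i (t_i + s_n)\, \mathcal{C}_{m, n-1}(\ttt, \sss')$. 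Dividing through by the common factor $\prod_{i < m}(t_i + s_n)$ produces the recursion.

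The main obstacle I foresee is verifying the radical decomposition $I_m + (x_{<m, n}) = P_1 \cap P_2$. The inclusion $\supseteq$ is trivial; the reverse amounts to showing $R/(I_m + (x_{<m, n}))$ is reduced. Its Cohen-Macaulayness---inherited from $R/I_m$ via the regular-sequence cut---rules out embedded primes, and generic reducedness along each of $V(P_1)$ and $V(P_2)$ should follow from a Jacobian calculation at a generic smooth point of each component. Combining these two facts yields reducedness, but carrying out the explicit local analysis is the most delicate step.
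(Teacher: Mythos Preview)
Your proposal is correct and yields the same recursion $\mathcal{C}_{m,n}=(t_m+s_n)\,\mathcal{C}_{m,n-1}+\mathcal{C}_{m-1,n-1}$ with the same base cases as the paper, but you obtain the recursion by a genuinely different route. The paper passes to the diagonal initial ideal $J_{m,n}=\init(I_m)$ and uses the elementary monomial-ideal identities $J_{m,n}+(x_{m,n})=J_{m,n-1}+(x_{m,n})$ and $J_{m,n}:(x_{m,n})=J_{m-1,n-1}$ to write down a single short exact sequence, which immediately gives a recursion on $\mathcal{K}$-polynomials and hence on $\mathcal{C}$. Your approach stays with $I_m$ itself: you cut by the regular sequence $(x_{1,n},\ldots,x_{m-1,n})$ on the Cohen--Macaulay ring $R/I_m$, decompose the result into two equidimensional prime components $P_1$ and $P_2$, and read off the recursion from additivity of multidegrees.

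What each buys: the paper's argument is shorter and entirely combinatorial once one accepts the diagonal Gr\"obner basis; no reducedness or localization checks are needed. Your argument is more geometric and avoids Gr\"obner bases altogether, but requires verifying that the localized multiplicities along $P_1$ and $P_2$ are both $1$. That step, which you flag as delicate, is in fact straightforward: at the generic point of $V(P_1)$ the variable $x_{m,n}$ is a unit, so the Laplace expansion shows the localized ideal equals $(P_1)_{P_1}$; at the generic point of $V(P_2)$ some $(m-1)$-minor of the top-left block is a unit, forcing $x_{m,n}$ into the localized ideal and again giving $(P_2)_{P_2}$. (One small slip: the inclusion that is trivial is $I_m+(x_{<m,n})\subseteq P_1\cap P_2$, not $\supseteq$.) Both proofs are valid; the paper's is the more economical one.
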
 

	For the proof \autoref{maxminC} we denote by $H_{m,n}$ the right hand side of the equality, i.e. 
	$$H_{m,n} \,=\, \sum_{(\mathbf{a},\mathbf{b})\in T} \ttt^\mathbf{a}\sss^\mathbf{b}$$
	with $T=\big\{( \mathbf{a},\mathbf{b})\in \NN^m\times \{0,1\}^n  \, \mid \,  |\mathbf{a}|+|\mathbf{b}|=n-m+1 \big\}$.  
	Firstly we observe that $H_{m,n}$ satisfies the following recursion: 
	
	\begin{lemma} 
		\label{recursionH}
		For all $n>m>1$, one has $H_{m,n}=s_nH_{m,n-1}+t_mH_{m,n-1}+H_{m-1,n-1}$.
	\end{lemma}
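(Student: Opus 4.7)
The plan is to prove the identity by a simple partition of the index set $T$ defining $H_{m,n}$, giving a direct bijective argument rather than invoking any result about determinantal ideals (we are proving a pure combinatorial recursion on generating polynomials).

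First, I would partition $T$ into three disjoint subsets according to the values of $b_n$ and $a_m$:
\begin{enumerate}[\rm (i)]
\item $T_1 = \{(\mathbf{a},\mathbf{b})\in T \mid b_n=1\}$;
\item $T_2 = \{(\mathbf{a},\mathbf{b})\in T \mid b_n=0,\ a_m\geq 1\}$;
\item $T_3 = \{(\mathbf{a},\mathbf{b})\in T \mid b_n=0,\ a_m=0\}$.
\end{enumerate}
Clearly $T = T_1 \sqcup T_2 \sqcup T_3$, so $H_{m,n}$ splits as the sum of the three corresponding partial sums.

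Next I would exhibit the obvious bijections identifying each piece with $s_n H_{m,n-1}$, $t_m H_{m,n-1}$, and $H_{m-1,n-1}$ respectively. For $T_1$, the map $(\mathbf{a},\mathbf{b}) \mapsto (\mathbf{a},(b_1,\ldots,b_{n-1}))$ identifies it with the index set of $H_{m,n-1}$ since $|\mathbf{a}|+(b_1+\cdots+b_{n-1}) = (n-m+1)-1 = (n-1)-m+1$, and the factor $s_n$ accounts for the removed coordinate. For $T_2$, the shift $(\mathbf{a},\mathbf{b}) \mapsto ((a_1,\ldots,a_{m-1},a_m-1),(b_1,\ldots,b_{n-1}))$ is a bijection onto the index set of $H_{m,n-1}$ since the total weight drops by $1$ from $n-m+1$ to $n-m=(n-1)-m+1$, and the factor $t_m$ absorbs the decrement. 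For $T_3$, dropping the two zero entries $a_m$ and $b_n$ gives the bijection onto the index set of $H_{m-1,n-1}$, whose defining condition is $|\mathbf{a}|+|\mathbf{b}|=(n-1)-(m-1)+1=n-m+1$, matching $T_3$ exactly.

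Summing the three contributions yields the desired recursion $H_{m,n} = s_n H_{m,n-1} + t_m H_{m,n-1} + H_{m-1,n-1}$. There is no real obstacle here: the argument is purely bookkeeping on indices, and the conditions $n>m>1$ are only needed to ensure that the objects $H_{m,n-1}$ and $H_{m-1,n-1}$ on the right-hand side are the meaningful instances of the same combinatorial definition.
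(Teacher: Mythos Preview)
Your proof is correct and follows essentially the same approach as the paper: partitioning the monomials of $H_{m,n}$ according to whether $s_n$ appears, and if not, whether $t_m$ appears. You spell out the bijections more explicitly, but the argument is identical.
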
 
	\begin{proof} The first addend  corresponds to the sum of all the monomials in $H_{m,n}$ that contain $s_n$. 
		The second corresponds to the sum of all the  monomials in $H_{m,n}$ that do not contain $s_n$ and contain $t_m$. 
		Finally the third addend corresponds to the sum of  all the monomials in $H_{m,n}$ that do not contain $s_n$ and $t_m$. Here it is important to observe that $n-m+1=(n-1)-(m-1)+1$. 
	\end{proof} 
	Secondly we observe that also $\mathcal{C}_{m,n}$ satisfies the same recursion of \autoref{recursionH}. 
	
	\begin{lemma} 
		\label{recursionC}
		For all $n>m>1$, one has $\mathcal{C}_{m,n}=s_n\mathcal{C}_{m,n-1}+t_m\mathcal{C}_{m,n-1}+\mathcal{C}_{m-1,n-1}$.
	\end{lemma}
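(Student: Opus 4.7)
The plan is to prove the recursion by cutting $R/I_m$ down to a smaller ambient ring and then exploiting the short exact sequence coming from multiplication by $z := x_{m,n}$. I first observe that $R/I_m$ is a Cohen-Macaulay domain (classical for maximal minors of a generic matrix), so that the variables $x_{1,n}, \ldots, x_{m-1, n}$, none of which lie in the prime ideal $I_m$, form a regular sequence on $R/I_m$ (verified by a codimension count). Setting them to zero corresponds to passing to $R' := R/(x_{1,n}, \ldots, x_{m-1, n})$, which is canonically $B[z]$ with $B := \kk[x_{i,j} : j \leq n-1]$ the coordinate ring of the $m\times(n-1)$ generic matrix. Laplace-expanding along column $n$ each $m$-minor of $X$ that uses that column (where only the entry $z$ now survives), the image of $I_m$ in $R'$ is
\begin{equation*}
J \;=\; I_m(B) \,+\, z \cdot I_{m-1}(B''),
\end{equation*}
where $I_m(B) \subset B$ is the ideal of $m$-minors of the $m\times(n-1)$ submatrix and $I_{m-1}(B'') \subset B'' := \kk[x_{i,j} : i \leq m-1,\; j \leq n-1]$ is the ideal of $(m-1)$-minors of the top-left $(m-1)\times(n-1)$ block. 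A comparison of Hilbert series, made possible by the regular sequence property, gives $\mathcal{C}(R'/J;\ttt,\sss) = \mathcal{C}_{m,n}$.

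Next I apply the short exact sequence of $R'$-modules
\begin{equation*}
0 \to (R'/(J:z))(-\ee_m - \ff_n) \xrightarrow{\,\cdot z\,} R'/J \to R'/(J,z) \to 0.
\end{equation*}
On one side, $(J,z) = I_m(B) + (z)$, so $R'/(J,z) \cong B/I_m(B)$; viewed as an $R'$-module, its multidegree is $(t_m + s_n)\,\mathcal{C}_{m,n-1}$, the extra factor accounting for the killed variable $z$. On the other side, a Laplace expansion along row $m$ gives the containment $I_m(B) \subseteq I_{m-1}(B'')$; combined with the fact that $z$ is a non-zero-divisor modulo $I_m(B) R'$ (since the quotient is a polynomial ring over the domain $B/I_m(B)$), this yields $J:z = I_{m-1}(B'')\,R'$. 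Hence $R'/(J:z) \cong (B/I_{m-1}(B''))[z]$, and its multidegree is $\mathcal{C}_{m-1,n-1}$. All three modules in the sequence have codimension $n-m+1$ in $R'$, so the additivity of the multidegree polynomial (\autoref{rem_multdeg_facts}(iii)) gives
\begin{equation*}
\mathcal{C}_{m,n} \;=\; \mathcal{C}_{m-1,n-1} \;+\; (t_m + s_n)\,\mathcal{C}_{m,n-1},
\end{equation*}
which is the desired recursion.

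The hard part will be the explicit identification of $J$, $J:z$, and $(J,z)$. This rests on two Laplace expansions (along column $n$ for the shape of $J$ in $R'$, and along row $m$ for the containment $I_m(B) \subseteq I_{m-1}(B'')$) together with the classical primality of the maximal-minor ideal $I_m(B)$ (ensuring $z$ is a non-zero-divisor in the relevant quotient). The regular-sequence property of $\{x_{1,n}, \ldots, x_{m-1, n}\}$ on $R/I_m$ needed at the start is immediate from Cohen-Macaulayness once one checks that the resulting ideal has codimension $(n-m+1) + (m-1) = n$ in $R$.
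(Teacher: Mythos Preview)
Your proof is correct but takes a genuinely different route from the paper's. The paper first passes to the initial ideal: since the $m$-minors are a Gr\"obner basis for the diagonal term order, one may replace $I_m$ by the monomial ideal $J_{m,n}$ generated by the main diagonals. At that level the decomposition $J_{m,n}=x_{m,n}J_{m-1,n-1}+J_{m,n-1}$ is a trivial combinatorial identity, and the colon and sum with $(x_{m,n})$ are read off immediately; the short exact sequence then gives a recursion for the $\mathcal{K}$-polynomial, from which the $\mathcal{C}$-recursion is extracted by the usual substitution. You instead work directly with the determinantal ideal: you first cut by the regular sequence $x_{1,n},\ldots,x_{m-1,n}$ (using Cohen--Macaulayness of $R/I_m$ and a codimension count), then identify $J$, $(J,z)$, and $J:z$ via Laplace expansions and the primality of $I_m(B)$. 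The paper's approach is lighter on prerequisites (only the Gr\"obner basis fact is needed, no CM or primality), and it yields the stronger $\mathcal{K}$-polynomial recursion \autoref{eq_recurr_K_poly} along the way. Your approach avoids the Gr\"obner degeneration entirely and is more intrinsic to the determinantal ring, at the cost of importing the classical structure theory of maximal minors; it also requires the extra preliminary step of passing to $R'$, which the paper does not need since its monomial decomposition already isolates $x_{m,n}$ without killing the other entries of column $n$.
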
 
	\begin{proof} 
		The $m$-minors of $X$ form a Gr\"obner basis of $I_m$ with respect to the diagonal term order. 
		Let $J_{m,n}=\init(I_m)$ be the monomial ideal generated by the main diagonals of the $m$-minors of $X$.  
		Then
		$$\mathcal{K}_{m,n}=\mathcal{K}(R/J_{m,n};  t_1,\dots, t_m,s_1,\dots,s_n)$$ and similarly for  $\mathcal{C}_{m,n}$. 
		We have a decomposition $J_{m,n}=x_{m,n}J_{m-1,n-1}+J_{m,n-1}$ from which it follows that  $J_{m,n}+(x_{m,n})=J_{m,n-1}+(x_{m,n})$ and  $J_{m,n}:(x_{m,n})=J_{m-1,n-1}$. 
		Therefore we have an induced short exact sequence
		$$0\to R/J_{m-1,n-1}\left(-\deg(x_{m,n})\right) \to R/J_{m,n}  \to R/\left(J_{m,n-1}+(x_{m,n})\right)\to 0$$
		and hence 
		\begin{equation}
			\label{eq_recurr_K_poly}
			\mathcal{K}_{m,n}=(1-t_ms_n)\mathcal{K}_{m,n-1}+t_ms_n\mathcal{K}_{m-1,n-1}.
		\end{equation}
		Replacing $\ttt$ with $\mathbf{1} - \ttt$ and $\sss$ with $\mathbf{1} - \sss$ and extracting the homogeneous component of degree $n-m+1$ we obtain
		$$\mathcal{C}_{m,n}=(t_m+s_n)\mathcal{C}_{m,n-1}+\mathcal{C}_{m-1,n-1},$$
		which concludes  the proof.
	\end{proof} 
	Now we are ready to prove \autoref{maxminC}. 
	
	\begin{proof}[Proof of  \autoref{maxminC}]  By \autoref{recursionH} and \autoref{recursionC}  the polynomials $H_{m,n}$ and $\mathcal{C}_{m,n}$  satisfy the same recursive relation.  
	Hence it suffices to check that they  agree when $m=1$ and when $m=n$. When $m=1$ the ideal $I_m$ is generated by a regular sequence of elements of degree $\ee_1+\ff_j$ with $j=1,\dots, m$. 
	In this case,  $\mathcal{C}_{1,n}=\prod_{j=1}^n(t_1+s_j)$ which clearly coincides with $H_{1,n}$. 
	When $m=n$ then $I_m$ is generated by single element of degree $\sum_{i=1}^m \ee_i+ \sum_{j=1}^m  \ff_j$, and hence $\mathcal{C}_{m,m}=\sum_{i=1}^m t_i+ \sum_{j=1}^m  s_j$ which coincides with $H_{m,m}$. 
	This concludes the proof of the theorem.
	\end{proof}

	\begin{remark}
		As a consequence of \autoref{maxmin} and \autoref{prop_sqr_free_primes}, it follows that $I_m \subset R$ is a CS ideal.
	\end{remark}

	\begin{remark}
		By inspecting the Eagon-Northcott resolution of $I_m$, we may compute that the $\NN$-graded Castelnuovo-Mumford regularity $I_m$ is given by $\reg(I_m) = m+n$. 
		Indeed, by coarsening the $\NN^m \oplus \NN^n$-grading of $R$ into an  $\NN$-grading, we obtain that each variable $x_{i,j}$ has degree $2$ and that the ideal $I_m$ is generated in degree $2m$.  
		Since the Eagon-Northcott complex has linear maps in the $x_{i,j}$'s and length $n-m$, it follows that $\reg(I_m) = n+m$.
		This shows that the upper bound of \autoref{thm_props_CS}(ii) is sharp.
	\end{remark}

	\begin{remark}
		From the recursion of $\mathcal{K}$-polynomials stated in \autoref{eq_recurr_K_poly}, we may compute the following formula for the $\mathcal{K}$-polynomial of $I_m$:
		$$
		\mathcal{K}(R/I_m; \ttt,\sss) \;=\; 1-t_1t_2 \cdots t_m \sum_{j=0}^{n-m} (-1)^j h_j(\ttt) e_{m+j}(\sss) 
		$$
		where $h_j(\ttt)$ is the complete symmetric polynomial of degree $j$ in $t_1,\ldots,t_m$ (sum of all monomials of degree $j$) and $e_j(\sss)$ is the elementary symmetric polynomial of degree $j$ in $s_1,\ldots,s_n$ (sum of all squarefree monomials of degree $j$). 
		Also, the formula for $\mathcal{K}(R/I_m; \ttt,\sss)$ may be obtained from the Eagon-Northcott resolution of $I_m$.
	\end{remark}
	
	Finally, we point out that most determinantal varieties do not have a multiplicity-free multidegree with respect to the  $\NN^m\oplus \NN^n$-grading. 
	
	\begin{remark}  
		For   $1<r<m$, the $\kk$-algebra $R/I_r$ does not have a multiplicity-free multidegree with respect to the  $\NN^m\oplus \NN^n$-grading. 
		For $2<r<m$, this happens already with 
		respect to the $\NN^m$ (row) grading and the  $\NN^n$ (column) grading individually, see \cite{conca2022radical}. 
		We have a significantly different behavior when $r=2$:  $R/I_2$ has a multiplicity free multidegree with respect to the row or column grading, but  one can check by utilizing \cite{miller2005combinatorial} or by direct computations that for $m,n>2$ the multidegree polynomial has coefficients $>1$ with respect to the  $\NN^m\oplus \NN^n$-grading. 
		For example, when $m=n=3$ and $r=2$ the multidegree polynomial is 
		\begin{eqnarray*} 
			\mathcal{C}(R/I_2, t_1,t_2,t_3,s_1,s_2,s_3) \;=\; &t_1^2t_2^2+
			t_1^2t_2t_3+
			t_1^2t_2s_1+
			{\bf  2}t_1t_2t_3s_1+
			t_1t_2s_1^2+
			t_1^2s_1s_2+
			{\bf  2}t_1t_2s_1s_2+ \\
			&{\bf  2}t_1s_1s_2s_3+
			t_1s_1^2s_2+
			s_1^2s_2s_3+
			s_1^2s_2^2+
			\cdots\mbox{  symmetric terms}. 
		\end{eqnarray*} 
	\end{remark} 
	%
	%
	%
	%
	%
	%
	%
	%
	%
	%
	%
	%
	%
	%
	%
	%
	%
	%
	%
	%
	%
	%
	%

	\section*{Acknowledgments}
	
	Caminata and Conca were partially supported by PRIN 2020355B8Y ``Squarefree Gr\"obner degenerations, special varieties and related topics" and by GNSAGA-INdAM. 
	Caminata was supported by the GNSAGA-INdAM grant ``New theoretical perspectives via Gr\"obner bases'' and by the European Union within the program NextGenerationEU.
	Cid-Ruiz was partially supported by an FWO Postdoctoral Fellowship (1220122N).
	This work was supported by the “National Group for Algebraic and Geometric Structures, and their Applications” (GNSAGA – INdAM).
	Cid-Ruiz is very appreciative of the hospitality offered by the mathematics department of the University of Genova, where part of this work was done.
	We thank the reviewers for carefully reading our paper and for their comments and corrections.

	
\begin{bibdiv}
\begin{biblist}

\bib{Bhattacharya}{article}{
      author={Bhattacharya, P.~B.},
       title={The {H}ilbert function of two ideals},
        date={1957},
     journal={Proc. Cambridge Philos. Soc.},
      volume={53},
       pages={568\ndash 575},
}

\bib{HUH_BRANDEN}{article}{
      author={Br\"{a}nd\'{e}n, Petter},
      author={Huh, June},
       title={Lorentzian polynomials},
        date={2020},
        ISSN={0003-486X},
     journal={Ann. of Math. (2)},
      volume={192},
      number={3},
       pages={821\ndash 891},
         url={https://doi.org/10.4007/annals.2020.192.3.4},
      review={\MR{4172622}},
}

\bib{BRION_MULT_FREE}{incollection}{
      author={Brion, Michel},
       title={Multiplicity-free subvarieties of flag varieties},
        date={2003},
   booktitle={Commutative algebra ({G}renoble/{L}yon, 2001)},
      series={Contemp. Math.},
      volume={331},
   publisher={Amer. Math. Soc., Providence, RI},
       pages={13\ndash 23},
         url={https://doi.org/10.1090/conm/331/05900},
      review={\MR{2011763}},
}

\bib{BOOK_UPCOMMING}{book}{
      author={Bruns, Winfried},
      author={Conca, Aldo},
      author={Raicu, Claudiu},
      author={Varbaro, Matteo},
       title={Determinants, {G}r\"{o}bner bases and cohomology},
      series={Springer Monographs in Mathematics},
   publisher={Springer, Cham},
        date={2022},
        ISBN={978-3-031-05479-2; 9783031054808},
         url={https://doi-org.kuleuven.e-bronnen.be/10.1007/978-3-031-05480-8},
      review={\MR{4627943}},
}

\bib{BRUNS_HERZOG}{book}{
      author={Bruns, Winfried},
      author={Herzog, J\"{u}rgen},
       title={Cohen-{M}acaulay rings},
      series={Cambridge Studies in Advanced Mathematics},
   publisher={Cambridge University Press, Cambridge},
        date={1993},
      volume={39},
        ISBN={0-521-41068-1},
      review={\MR{1251956}},
}

\bib{CS_PAPER}{article}{
      author={Cartwright, Dustin},
      author={Sturmfels, Bernd},
       title={The {H}ilbert scheme of the diagonal in a product of projective
  spaces},
        date={2010},
        ISSN={1073-7928},
     journal={Int. Math. Res. Not. IMRN},
      number={9},
       pages={1741\ndash 1771},
         url={https://doi.org/10.1093/imrn/rnp201},
      review={\MR{2643580}},
}

\bib{POSITIVITY}{article}{
      author={Castillo, Federico},
      author={Cid-Ruiz, Yairon},
      author={Li, Binglin},
      author={Monta{\~n}o, Jonathan},
      author={Zhang, Naizhen},
       title={When are multidegrees positive?},
        date={2020},
     journal={Advances in Mathematics},
      volume={374},
       pages={107382},
}

\bib{DOUBLE_SCHUBERT}{article}{
      author={Castillo, Federico},
      author={Cid-Ruiz, Yairon},
      author={Mohammadi, Fatemeh},
      author={Monta{\~n}o, Jonathan},
       title={Double {S}chubert polynomials do have saturated {N}ewton
  polytopes},
        date={2021},
     journal={arXiv preprint arXiv:2109.10299},
}

\bib{EQ_MDEG_SYM_MAT}{article}{
      author={Cid-Ruiz, Yairon},
       title={Equations and multidegrees for inverse symmetric matrix pairs},
        date={2021},
        ISSN={0373-3505},
     journal={Matematiche (Catania)},
      volume={76},
      number={2},
       pages={369\ndash 381},
         url={https://doi.org/10.4418/2021.76.2.5},
      review={\MR{4334893}},
}

\bib{FIBER_FULL}{article}{
      author={Cid-Ruiz, Yairon},
       title={Fiber-full modules and a local freeness criterion for local
  cohomology modules},
        date={2021},
     journal={arXiv preprint arXiv:2106.07777},
}

\bib{cidruiz2021mixed}{article}{
      author={Cid-Ruiz, Yairon},
       title={Mixed multiplicities and projective degrees of rational maps},
        date={2021},
     journal={J. Algebra},
      volume={566},
       pages={136\ndash 162},
}

\bib{cid2021study}{article}{
      author={Cid-Ruiz, Yairon},
      author={Clarke, Oliver},
      author={Mohammadi, Fatemeh},
       title={A study of nonlinear multiview varieties},
        date={2023},
        ISSN={0021-8693},
     journal={J. Algebra},
      volume={620},
       pages={363\ndash 391},
  url={https://doi-org.kuleuven.e-bronnen.be/10.1016/j.jalgebra.2022.12.036},
      review={\MR{4536152}},
}

\bib{FIB_FULL_SCHEME}{article}{
      author={Cid-Ruiz, Yairon},
      author={Ramkumar, Ritvik},
       title={The fiber-full scheme},
        date={2021},
     journal={arXiv preprint arXiv:2108.13986},
}

\bib{LOC_FIB_FULL_SCHEME}{article}{
      author={Cid-Ruiz, Yairon},
      author={Ramkumar, Ritvik},
       title={A local study of the fiber-full scheme},
        date={2023},
        ISSN={0021-8693},
     journal={J. Algebra},
      volume={636},
       pages={248\ndash 278},
  url={https://doi-org.kuleuven.e-bronnen.be/10.1016/j.jalgebra.2023.08.039},
      review={\MR{4642201}},
}

\bib{CDNG_MINORS}{article}{
      author={Conca, Aldo},
      author={De~Negri, Emanuela},
      author={Gorla, Elisa},
       title={Universal {G}r\"{o}bner bases for maximal minors},
        date={2015},
        ISSN={1073-7928},
     journal={Int. Math. Res. Not. IMRN},
      number={11},
       pages={3245\ndash 3262},
         url={https://doi.org/10.1093/imrn/rnu032},
      review={\MR{3373050}},
}

\bib{CDNG_GIN}{incollection}{
      author={Conca, Aldo},
      author={De~Negri, Emanuela},
      author={Gorla, Elisa},
       title={Multigraded generic initial ideals of determinantal ideals},
        date={2017},
   booktitle={Homological and computational methods in commutative algebra},
      series={Springer INdAM Ser.},
      volume={20},
   publisher={Springer, Cham},
       pages={81\ndash 96},
      review={\MR{3751880}},
}

\bib{CDNG_GRAPH}{article}{
      author={Conca, Aldo},
      author={De~Negri, Emanuela},
      author={Gorla, Elisa},
       title={Cartwright-{S}turmfels ideals associated to graphs and linear
  spaces},
        date={2018},
        ISSN={2415-6302},
     journal={J. Comb. Algebra},
      volume={2},
      number={3},
       pages={231\ndash 257},
         url={https://doi.org/10.4171/JCA/2-3-2},
      review={\MR{3845718}},
}

\bib{CDNG_CS_IDEALS}{article}{
      author={Conca, Aldo},
      author={De~Negri, Emanuela},
      author={Gorla, Elisa},
       title={Universal {G}r\"{o}bner bases and {C}artwright-{S}turmfels
  ideals},
        date={2020},
        ISSN={1073-7928},
     journal={Int. Math. Res. Not. IMRN},
      number={7},
       pages={1979\ndash 1991},
         url={https://doi.org/10.1093/imrn/rny075},
      review={\MR{4089441}},
}

\bib{conca2022radical}{article}{
      author={Conca, Aldo},
      author={De~Negri, Emanuela},
      author={Gorla, Elisa},
       title={Radical generic initial ideals},
        date={2022},
     journal={Vietnam Journal of Mathematics},
       pages={1\ndash 21},
}

\bib{CV_SQRFREE}{article}{
      author={Conca, Aldo},
      author={Varbaro, Matteo},
       title={Square-free {G}r\"{o}bner degenerations},
        date={2020},
        ISSN={0020-9910},
     journal={Invent. Math.},
      volume={221},
      number={3},
       pages={713\ndash 730},
         url={https://doi.org/10.1007/s00222-020-00958-7},
      review={\MR{4132955}},
}

\bib{EISEN_COMM}{book}{
      author={Eisenbud, David},
       title={Commutative algebra with a view towards algebraic geometry},
      series={Graduate Texts in Mathematics, 150},
   publisher={Springer-Verlag},
        date={1995},
}

\bib{EHV}{article}{
      author={Eisenbud, David},
      author={Huneke, Craig},
      author={Vasconcelos, Wolmer},
       title={Direct methods for primary decomposition},
        date={1992},
        ISSN={0020-9910},
     journal={Invent. Math.},
      volume={110},
      number={2},
       pages={207\ndash 235},
         url={https://doi.org/10.1007/BF01231331},
      review={\MR{1185582}},
}

\bib{JOINS_INTERS}{book}{
      author={Flenner, H.},
      author={O'Carroll, L.},
      author={Vogel, W.},
       title={Joins and intersections},
      series={Springer Monographs in Mathematics},
   publisher={Springer-Verlag, Berlin},
        date={1999},
        ISBN={3-540-66319-3},
         url={https://doi.org/10.1007/978-3-662-03817-8},
      review={\MR{1724388}},
}

\bib{FULTON_INTER}{book}{
      author={Fulton, William},
       title={Intersection theory},
     edition={Second},
      series={Ergebnisse der Mathematik und ihrer Grenzgebiete. 3. Folge. A
  Series of Modern Surveys in Mathematics [Results in Mathematics and Related
  Areas. 3rd Series. A Series of Modern Surveys in Mathematics]},
   publisher={Springer-Verlag, Berlin},
        date={1998},
      volume={2},
        ISBN={3-540-62046-X; 0-387-98549-2},
         url={https://doi.org/10.1007/978-1-4612-1700-8},
      review={\MR{1644323}},
}

\bib{GORTZ_WEDHORN}{book}{
      author={G\"{o}rtz, Ulrich},
      author={Wedhorn, Torsten},
       title={Algebraic geometry {I}},
      series={Advanced Lectures in Mathematics},
   publisher={Vieweg + Teubner, Wiesbaden},
        date={2010},
        ISBN={978-3-8348-0676-5},
         url={https://doi.org/10.1007/978-3-8348-9722-0},
        note={Schemes with examples and exercises},
}

\bib{M2}{misc}{
      author={Grayson, Daniel~R.},
      author={Stillman, Michael~E.},
       title={Macaulay2, a software system for research in algebraic geometry},
         how={Available at \url{https://math.uiuc.edu/Macaulay2/}},
}

\bib{EGAII}{article}{
      author={Grothendieck, A.},
       title={\'{E}l\'{e}ments de g\'{e}om\'{e}trie alg\'{e}brique. {II}.
  \'{E}tude globale \'{e}l\'{e}mentaire de quelques classes de morphismes},
        date={1961},
        ISSN={0073-8301},
     journal={Inst. Hautes \'{E}tudes Sci. Publ. Math.},
      number={8},
       pages={222},
         url={http://www.numdam.org/item?id=PMIHES_1961__8__222_0},
      review={\MR{217084}},
}

\bib{HAIMAN_STURMFELS}{article}{
      author={Haiman, Mark},
      author={Sturmfels, Bernd},
       title={Multigraded {H}ilbert schemes},
        date={2004},
     journal={J. Algebraic Geom.},
      volume={13},
      number={4},
       pages={725\ndash 769},
}

\bib{HARTSHORNE_CONNNECT}{article}{
      author={Hartshorne, Robin},
       title={Connectedness of the {H}ilbert scheme},
        date={1966},
        ISSN={0073-8301},
     journal={Inst. Hautes \'{E}tudes Sci. Publ. Math.},
      number={29},
       pages={5\ndash 48},
         url={http://www.numdam.org/item?id=PMIHES_1966__29__5_0},
      review={\MR{213368}},
}

\bib{HARTSHORNE}{book}{
      author={Hartshorne, Robin},
       title={Algebraic geometry},
   publisher={Springer-Verlag, New York-Heidelberg},
        date={1977},
        note={Graduate Texts in Mathematics, No. 52},
}

\bib{HERMANN_MULTIGRAD}{article}{
      author={Herrmann, Manfred},
      author={Hyry, Eero},
      author={Ribbe, J\"{u}rgen},
      author={Tang, Zhongming},
       title={Reduction numbers and multiplicities of multigraded structures},
        date={1997},
     journal={J. Algebra},
      volume={197},
      number={2},
       pages={311\ndash 341},
}

\bib{HERZOG_HIBI}{book}{
      author={Herzog, J\"{u}rgen},
      author={Hibi, Takayuki},
       title={Monomial ideals},
      series={Graduate Texts in Mathematics},
   publisher={Springer-Verlag London, Ltd., London},
        date={2011},
      volume={260},
        ISBN={978-0-85729-105-9},
         url={https://doi.org/10.1007/978-0-85729-106-6},
      review={\MR{2724673}},
}

\bib{HOSTEN_THOMAS}{article}{
      author={Ho\c{s}ten, Serkan},
      author={Thomas, Rekha~R.},
       title={The associated primes of initial ideals of lattice ideals},
        date={1999},
        ISSN={1073-2780},
     journal={Math. Res. Lett.},
      volume={6},
      number={1},
       pages={83\ndash 97},
         url={https://doi.org/10.4310/MRL.1999.v6.n1.a6},
      review={\MR{1682705}},
}

\bib{Huh12}{article}{
      author={Huh, June},
       title={Milnor numbers of projective hypersurfaces and the chromatic
  polynomial of graphs},
        date={2012},
     journal={Journal of the American Mathematical Society},
      volume={25},
      number={3},
       pages={907\ndash 927},
}

\bib{huh2019logarithmic}{misc}{
      author={Huh, June},
      author={Matherne, Jacob~P.},
      author={Mészáros, Karola},
      author={Dizier, Avery~St.},
       title={Logarithmic concavity of schur and related polynomials},
        date={2019},
}

\bib{HUNEKE_LOC_COHOM}{incollection}{
      author={Huneke, Craig},
       title={Lectures on local cohomology},
        date={2007},
   booktitle={Interactions between homotopy theory and algebra},
      series={Contemp. Math.},
      volume={436},
   publisher={Amer. Math. Soc., Providence, RI},
       pages={51\ndash 99},
         url={https://doi.org/10.1090/conm/436/08404},
        note={Appendix 1 by Amelia Taylor},
      review={\MR{2355770}},
}

\bib{HYRY_MULTGRAD}{article}{
      author={Hyry, Eero},
       title={The diagonal subring and the {C}ohen-{M}acaulay property of a
  multigraded ring},
        date={1999},
        ISSN={0002-9947},
     journal={Trans. Amer. Math. Soc.},
      volume={351},
      number={6},
       pages={2213\ndash 2232},
         url={https://doi.org/10.1090/S0002-9947-99-02143-1},
      review={\MR{1467469}},
}

\bib{KS_INIT_PRIMES}{article}{
      author={Kalkbrener, Michael},
      author={Sturmfels, Bernd},
       title={Initial complexes of prime ideals},
        date={1995},
        ISSN={0001-8708},
     journal={Adv. Math.},
      volume={116},
      number={2},
       pages={365\ndash 376},
         url={https://doi.org/10.1006/aima.1995.1071},
      review={\MR{1363769}},
}

\bib{VERMA_BIGRAD}{incollection}{
      author={Katz, D.},
      author={Mandal, S.},
      author={Verma, J.~K.},
       title={Hilbert functions of bigraded algebras},
        date={1994},
   booktitle={Commutative algebra ({T}rieste, 1992)},
   publisher={World Sci. Publ., River Edge, NJ},
       pages={291\ndash 302},
}

\bib{KNUTSON_MILLER_SCHUBERT}{article}{
      author={Knutson, Allen},
      author={Miller, Ezra},
       title={Gr\"{o}bner geometry of {S}chubert polynomials},
        date={2005},
        ISSN={0003-486X},
     journal={Annals of Mathematics},
      volume={161},
      number={3},
       pages={1245\ndash 1318},
}

\bib{KNUTSON_MILLER_YONG}{article}{
      author={Knutson, Allen},
      author={Miller, Ezra},
      author={Yong, Alexander},
       title={Gr\"{o}bner geometry of vertex decompositions and of flagged
  tableaux},
        date={2009},
        ISSN={0075-4102},
     journal={J. Reine Angew. Math.},
      volume={630},
       pages={1\ndash 31},
         url={https://doi.org/10.1515/CRELLE.2009.033},
      review={\MR{2526784}},
}

\bib{manivel2020complete}{article}{
      author={Manivel, Laurent},
      author={Micha{\l}ek, Mateusz},
      author={Monin, Leonid},
      author={Seynnaeve, Tim},
      author={Vodi{\v{c}}ka, Martin},
       title={Complete quadrics: Schubert calculus for gaussian models and
  semidefinite programming},
        date={2020},
     journal={arXiv preprint arXiv:2011.08791},
        note={To appear in Journal of the European Mathematical Society},
}

\bib{MATSUMURA_OLD}{book}{
      author={Matsumura, Hideyuki},
       title={Commutative algebra},
     edition={Second},
      series={Mathematics Lecture Note Series},
   publisher={Benjamin/Cummings Publishing Co., Inc., Reading, MA},
        date={1980},
      volume={56},
        ISBN={0-8053-7026-9},
      review={\MR{575344}},
}

\bib{MATSUMURA}{book}{
      author={Matsumura, Hideyuki},
       title={Commutative ring theory},
     edition={1},
      series={Cambridge Studies in Advanced Mathematics volume 8},
   publisher={Cambridge University Press},
        date={1989},
}

\bib{McCULLOUGH_PEEVA}{article}{
      author={McCullough, Jason},
      author={Peeva, Irena},
       title={Counterexamples to the {E}isenbud-{G}oto regularity conjecture},
        date={2018},
        ISSN={0894-0347},
     journal={J. Amer. Math. Soc.},
      volume={31},
      number={2},
       pages={473\ndash 496},
         url={https://doi.org/10.1090/jams/891},
      review={\MR{3758150}},
}

\bib{michalek2020maximum}{article}{
      author={Micha\l~ek, Mateusz},
      author={Monin, Leonid},
      author={Wi\'{s}niewski, Jaros\l aw~A.},
       title={Maximum likelihood degree, complete quadrics, and
  {$\Bbb{C}^*$}-action},
        date={2021},
     journal={SIAM J. Appl. Algebra Geom.},
      volume={5},
      number={1},
       pages={60\ndash 85},
         url={https://doi.org/10.1137/20M1335960},
      review={\MR{4219257}},
}

\bib{EXPONENTIAL_VARIETIES}{article}{
      author={Micha{\l}ek, Mateusz},
      author={Sturmfels, Bernd},
      author={Uhler, Caroline},
      author={Zwiernik, Piotr},
       title={Exponential varieties},
        date={2016},
     journal={Proc. Lond. Math. Soc. (3)},
      volume={112},
      number={1},
       pages={27\ndash 56},
}

\bib{miller2005combinatorial}{book}{
      author={Miller, Ezra},
      author={Sturmfels, Bernd},
       title={Combinatorial commutative algebra},
      series={Graduate Texts in Mathematics},
   publisher={Springer-Verlag, New York},
        date={2005},
      volume={227},
}

\bib{monical2019newton}{article}{
      author={Monical, Cara},
      author={Tokcan, Neriman},
      author={Yong, Alexander},
       title={Newton polytopes in algebraic combinatorics},
        date={2019},
     journal={Selecta Mathematica},
      volume={25},
      number={5},
       pages={1\ndash 37},
}

\bib{SCHENZEL_CMF}{incollection}{
      author={Schenzel, Peter},
       title={On the dimension filtration and {C}ohen-{M}acaulay filtered
  modules},
        date={1999},
   booktitle={Commutative algebra and algebraic geometry ({F}errara)},
      series={Lecture Notes in Pure and Appl. Math.},
      volume={206},
   publisher={Dekker, New York},
       pages={245\ndash 264},
         url={https://doi.org/10.1090/conm/239/03606},
      review={\MR{1702109}},
}

\bib{schrijver2003combinatorial}{book}{
      author={Schrijver, Alexander},
       title={Combinatorial optimization. {P}olyhedra and efficiency. {V}ol.
  {B}},
      series={Algorithms and Combinatorics},
   publisher={Springer-Verlag, Berlin},
        date={2003},
      volume={24},
        note={Matroids, trees, stable sets, Chapters 39--69},
}

\bib{SERRE_LOC}{book}{
      author={Serre, Jean-Pierre},
       title={Local algebra},
      series={Springer Monographs in Mathematics},
   publisher={Springer-Verlag, Berlin},
        date={2000},
        ISBN={3-540-66641-9},
         url={https://doi.org/10.1007/978-3-662-04203-8},
        note={Translated from the French by CheeWhye Chin and revised by the
  author},
      review={\MR{1771925}},
}

\bib{stacks-project}{misc}{
      author={{Stacks project authors}, The},
       title={The stacks project},
         how={\url{https://stacks.math.columbia.edu}},
        date={2022},
}

\bib{STURMFELS_GROBNER}{book}{
      author={Sturmfels, Bernd},
       title={Gr\"obner bases and convex polytopes},
      series={University Lecture Series},
   publisher={American Mathematical Society, Providence, RI},
        date={1996},
      volume={8},
}

\bib{STV_DEGREE}{article}{
      author={Sturmfels, Bernd},
      author={Trung, Ng\^{o}~Vi\^{e}t},
      author={Vogel, Wolfgang},
       title={Bounds on degrees of projective schemes},
        date={1995},
     journal={Math. Ann.},
      volume={302},
      number={3},
       pages={417\ndash 432},
}

\bib{STURMFELS_UHLER}{article}{
      author={Sturmfels, Bernd},
      author={Uhler, Caroline},
       title={Multivariate {G}aussian, semidefinite matrix completion, and
  convex algebraic geometry},
        date={2010},
        ISSN={0020-3157},
     journal={Ann. Inst. Statist. Math.},
      volume={62},
      number={4},
       pages={603\ndash 638},
         url={https://doi.org/10.1007/s10463-010-0295-4},
      review={\MR{2652308}},
}

\bib{SwHu}{book}{
      author={Swanson, Irena},
      author={Huneke, Craig},
       title={Integral closure of ideals, rings, and modules},
      series={London Mathematical Society Lecture Note Series},
   publisher={Cambridge University Press, Cambridge},
        date={2006},
      volume={336},
        ISBN={978-0-521-68860-4; 0-521-68860-4},
      review={\MR{2266432}},
}

\bib{trung2001positivity}{article}{
      author={Trung, Ng{\^o}~Vi{\^e}t},
       title={Positivity of mixed multiplicities},
        date={2001},
     journal={Math. Ann.},
      volume={319},
       pages={33\ndash 63},
}

\bib{TRUNG_VERMA}{article}{
      author={Trung, Ngo~Viet},
      author={Verma, Jugal},
       title={Mixed multiplicities of ideals versus mixed volumes of
  polytopes},
        date={2007},
        ISSN={0002-9947},
     journal={Trans. Amer. Math. Soc.},
      volume={359},
      number={10},
       pages={4711\ndash 4727},
         url={https://doi.org/10.1090/S0002-9947-07-04054-8},
      review={\MR{2320648}},
}

\bib{VAN_DER_WAERDEN}{inproceedings}{
      author={Van~der Waerden, Bartel~Leendert},
       title={On {H}ilbert’s function, series of composition of ideals and a
  generalization of the theorem of {B}ezout},
        date={1929},
   booktitle={Proc. roy. acad. amsterdam},
      volume={31},
       pages={749\ndash 770},
}

\end{biblist}
\end{bibdiv}

\end{document}